\documentclass{amsart}[11pt]

\usepackage{amsmath, amsthm, amssymb, amsfonts, verbatim}
\usepackage{array}
\usepackage[active]{srcltx}
\usepackage{epsfig}
\usepackage{color}
\usepackage{comment}
\usepackage{setspace}
\usepackage{textcomp}
\usepackage{multirow}
\usepackage{xspace,color}
\usepackage{subcaption}
\usepackage{tikz}

\usepackage{hyperref,mathabx,color,enumerate,chngpage}
\usepackage{stmaryrd}
\usepackage{graphicx}
\usepackage{mathrsfs}
\usepackage{indentfirst}
\usepackage{pdfsync}
%


%
%
%
%
%
%



\newtheorem{thm}{Theorem}[section]





\newcommand{\pmat}[1]{\begin{pmatrix} #1 \end{pmatrix}}

\newcommand{\algn}[1]{\begin{align} #1 \end{align}}
\newcommand{\algns}[1]{\begin{align*} #1 \end{align*}}

\newcommand{\gat}[1]{\begin{gather} #1 \end{gather}}

\newcommand{\LRp}[1]{\left( #1 \right)}

\newcommand{\LRa}[1]{\left< #1 \right>}

\newcommand{\jump}[1] {\ensuremath{\left[\!\left[{#1}\right]\!\right]}}

\newcommand{\R}{{\mathbb R}}

\newcommand{\curl}{\operatorname{curl}}
\newcommand{\rot}{\operatorname{rot}}
\renewcommand{\div}{\operatorname{div}}

\newcommand{\spn}{\operatorname{span}}
\newcommand{\supp}{\operatorname{supp}}
\newcommand{\osc}{\operatorname{osc}}

\newcommand{\bs}{\boldsymbol}

\newcommand{\pd}{\partial}

\newcommand{\nb}{\bs{n}}

\newcommand{\tnorm}[2][]{%
	\mathopen{#1|\mkern-1.5mu#1|\mkern-1.5mu#1|}
	#2
	\mathclose{#1|\mkern-1.5mu#1|\mkern-1.5mu#1|}
}

\newcommand{\tred}[1]{\textcolor{black}{#1}}

\newtheorem{theorem}{Theorem}[section]
 
\newtheorem{lemma}[theorem]{Lemma}
\newtheorem{cor}[theorem]{Corollary}

\newtheorem{rmk}[theorem]{Remark}


\newcommand{\ub}{{\bs{u}}}

\newcommand{\vb}{{\bs{v}}}
\newcommand{\Vb}{{\bs{V}}}


\title[A posteriori estimate of Darcy flow with Robin interface conditions]{A posteriori error estimates of Darcy flows with Robin-type jump interface conditions}

\begin{document}

\author{Jeonghun J. Lee}
\address{Department of Mathematics, Baylor University, Waco, Texas, USA }

\email{jeonghun\_lee@baylor.edu}

\subjclass[2000]{Primary: 65N30, 65N15}
\begin{abstract}
  In this work we develop an a posteriori error estimator for mixed finite element methods of Darcy flow problems with Robin-type jump interface conditions. We construct an energy-norm type a posteriori error estimator using the Stenberg post-processing. The reliability of the estimator is proved using an interface-adapted Helmholtz-type decomposition and an interface-adapted Scott--Zhang type interpolation operator. A local efficiency and the reliability of post-processed pressure are also proved. Numerical results illustrating adaptivity algorithms using our estimator are included.
\end{abstract}

\keywords{mixed finite element methods, a posteriori error estimates, Robin boundary conditions}
\date{April, 2023}
\maketitle

\section{Introduction}

Fluid flow in porous media appears in various fields of science and engineering applications. Therefore, mathematical modeling and numerical methods for finding accurate numerical solutions of porous media flow have been important problems in computational mathematics. Recently, mathematical models in which porous media domains have low-dimensional fault (or fracture) structures are considered for accurate descriptions of more realistic porous media flow. 
In \cite{Martin-Jaffre-Roberts:2005}, some porous media flow models with fault/fracture structures were proposed in which fluid flow on fractures and in surrounding porous media are governed by separate partial differential equations with coupling conditions. In \cite{Lee-Tan-Villa-Ghattas:2022}, a reduced model was derived under the assumption that there is no fluid flow along fault/fracture structures because of very low permeability on fault/fracture. In the reduced models, the fluid flow and the pressure jump on faults are related by a
Robin-type interface condition. We remark that similar models with nonlinear extensions are used for porous media flows with semi-permeable membrane structures in consideration of their applications to chemical processes in biological tissues (see, e.g., \cite{Cangiani-Georgoulis-Jensen:2013,Cangiani-Georgoulis-Sabawi:2018}).



The purpose of this paper is to obtain a posteriori error estimators for the model derived in \cite{Lee-Tan-Villa-Ghattas:2022} with the dual mixed form of finite element methods. 
We remark that a posteriori error estimate results for the more complex models in \cite{Martin-Jaffre-Roberts:2005,Angot-Boyer-Hubert:2009} (see \cite{Chen-Sun:2017,Hecht-Mghazli-Roberts:2019,Zhao-Chung:2022} for a posteriori error estimates), do not  imply a posteriori error estimate results for the model that we are interested in. 
This is because a less number of error terms makes local efficiency of a posteriori error estimators more difficult. 

We also remark that the problem in this paper can be viewed as a generalization of mixed finite element methods for Poisson equations with Robin boundary conditions which was studied in \cite{Konno-Schotzau-Stenberg:2011}. A priori and a posteriori error estimates are done in \cite{Konno-Schotzau-Stenberg:2011} using the mesh-dependent norm approach (cf. \cite{Babuska-Osborn-Pitkaranta:1980,Lovadina-Stenberg:2006,Stenberg:1991}) but the saturation assumption is necessary for the reliability of the estimator. The analysis in this paper does not need such an assumption for reliability, and it also gives a new reliability estimate for post-processed pressure.

The paper is organized as follows. In Section 2 we present background notions on function spaces, governing equations, finite element discretization. We define our a posteriori error estimator and prove its reliability and local efficiency in Section 3. In Section 4 and 5, we present numerical experiment results which show performance of our a posteriori error estimator, and conclusion with summary. Finally, some calculus identities which are used in our analysis are explained in Section 6 as appendix.

\section{Preliminaries for governing equations}

\subsection{Notation and definitions}

For a bounded domain $D \subset \mathbb{R}^n$ ($n=2,3$) with positive $n$-dimensional Lebesgue measure, 
we use the convention that $(u , v)_{D} = \int_{D} uv \, dx$ for a subdomain $D \subset \overline{\Omega}$ which has positive $n$-dimensional Lebesgue measure. Similarly, $\LRa{u , v}_D= \int_{D} u v \,dS$ for a subdomain $D \subset \overline{\Omega}$ which has positive $(n-1)$ or $(n-2)$-dimensional Lebesgue measure up to context.

\subsection{Governing equations and variational formulation}
Let $\Omega \subset \mathbb{R}^n$, $n=2,3$ be a homologically trivial bounded domain with polygonal/polyhedral boundary. 
We assume that a fault $\Gamma$ is a union of disjoint ($n-1$)-dimensional piecewise linear submanifolds in $\Omega$. Each connected component of $\Gamma$ is a union of linear segments (if $n=2$) or as a union of planar domains such that the boundary of each planar domain is a union of linear segments. We also assume that there are two open subdomains $\Omega_+,\Omega_- \subset (\Omega \setminus \Gamma)$ with Lipschitz boundaries such that 
\algns{
	\overline{\Omega}= \overline{\Omega_+} \cup \overline{\Omega_-}, \qquad \Gamma \subset \partial \Omega_+ \cap \partial \Omega_-,
}
and only one side of $\Gamma$ is in contact with $\Omega_+$ or $\Omega_-$. 
Let $\bs{n}_+$ and $\bs{n}_-$ be the two unit normal vector fields on $\Gamma$ with opposite directions ($\bs{n}_+ =-\bs{n}_-$) such that $\bs{n}_{\pm}$ correspond to the unit outward normal vector fields from $\Omega{\pm}$ (see~Figure~\ref{fig:model-domain}). 

Suppose that $\Gamma_D$, $\Gamma_N$ are disjoint ($n-1$)-dimensional open submanifolds in $\partial \Omega$ such that $\overline{\Gamma_D} \cup \overline{\Gamma_N} = \partial \Omega$. 
In this paper we assume the following:
\begin{align} 
	\notag 
    &\text{ For any }q \in L^2(\Omega) \text{ there exists } \bs{w} \in H^1(\Omega;\R^n) 
    \\ 
	\label{eq:assumption1}
    &\text{ such that } \bs{w}|_{\Gamma \cup \Gamma_N}= 0, \div \bs{w}=q \text{ and } \|\bs{w}\|_1 \le C\|q\|_0 
    \\ 
	\notag
	&\text{ with a constant } C>0 \text{ depending on } \Omega, \Gamma, \Gamma_N .
\end{align}
The assumption \eqref{eq:assumption1} is a weak assumption. For example, if both of $\partial \Omega_+ \cap \partial \Omega \cap \tred{\Gamma_D}$ and $\partial \Omega_- \cap \partial \Omega \cap \tred{\Gamma_D}$ have positive ($n-1$)-dimensional Lebesgue measures, then \eqref{eq:assumption1} holds. \tred{To see this, suppose that $q \in L^2(\Omega)$ is given. Note that there exist $\boldsymbol{w}_+ \in H^1(\Omega_+; \mathbb{R}^n) $ such that $\div \bs{w}_+ = q|_{\Omega_+}$, $\bs{w}_+|_{\pd \Omega_+ \setminus \Gamma_D} = 0$, and $\| \bs{w}_+ \|_1 \le C \| q|_{\Omega_+}\|_0$ (see, e.g., \cite[Lemma~B.1]{Lee-Baerland-Mardal-Winther:2017}). Similarly, there exists $\boldsymbol{w}_- \in H^1(\Omega_-; \mathbb{R}^n)$, $\div \bs{w}_- = q|_{\Omega_-}$, $\bs{w}_-|_{\pd \Omega_- \setminus \Gamma_D} =0$, and $\| \bs{w}_- \|_1 \le C \| q|_{\Omega_-}\|_0$. Then, $\bs{w}$ defined by $\bs{w}|_{\Omega_\pm} = \bs{w}_{\pm}$, satisfies \eqref{eq:assumption1}. }

%

For any $q \in L^2(\Omega)$ with sufficient regularity, we use $q_+|_{\Gamma}$ and $q_-|_{\Gamma}$ to denote the traces of $q|_{\Omega_+}$ and $q|_{\Omega_-}$ on $\Gamma$. For simplicity we use $\jump{q}|_{\Gamma}:= q_+|_{\Gamma} - q_-|_{\Gamma}$.  
Note that the continuity of $q$ on $\Gamma$ is not assumed in general, so $\jump{q}|_{\Gamma} \not = 0$.
For a vector-valued function $\bs{v}$ on $\Omega$ with enough regularity (e.g., $\bs{v} \in H^s(\Omega \setminus \Gamma; \mathbb{R}^n)$ with $s > 1/2$), $\bs{v}_+|_{\Gamma}$ and $\bs{v}_-|_{\Gamma}$ are well-defined as the traces of $\bs{v}$ from $\Omega_+$ and $\Omega_-$. We say that $\bs{v}$ satisfies normal continuity on $\Gamma$ if $\bs{v}_+|_{\Gamma} \cdot \bs{n}_+ = -\bs{v}_-|_{\Gamma}\cdot \bs{n}_-$ on $\Gamma$.


\begin{figure}
 \begin{center}
 \begin{tikzpicture}[scale=2.5]
     \draw [color=black,fill = blue!20!green](0,0) rectangle (2,2);
     \draw [dotted,color=black] (1,0)--(1,0.4);
     \draw [dotted,color=black] (1,1.6)--(1,2);
     \draw [thin,color=black] (1,0.4)--(1,1.6)
     node[ near start, below, right ]{$\Gamma$};
     \draw[->] (1,1)--(1.3,1) node[midway,above]{$n_+$};
     \draw[->] (1,1)--(0.7,1) node[midway,above]{$n_-$};
     \draw [color=black] (0,1)--(0,2)--(2,2)--(2,1);
     \node at (0.5,0.5) {$\Omega_+$};
     \node at (1.5,0.5) {$\Omega_-$};
 \end{tikzpicture}
 \end{center}
 \caption{A model domain with interface $\Gamma$}
 \label{fig:model-domain} 
 \end{figure}

For governing equations assume that $\kappa$ is a symmetric positive definite tensor on $\Omega$. 
In Darcy flow problems, the pressure $p$ and fluid flow $\bs{u}$ satisfy Darcy's law $\bs{u} = - \kappa \nabla p$ in $\Omega$.
Conservation of mass gives $\div \bs{u} = f$ for given source/sink function $f$ on $\Omega$. 
The pressure and flux boundary conditions are given by 
\algns{
	p = g_D \text{  on  }\Gamma_D, \quad \ub \cdot \nb = g_N \text{  on  }\Gamma_N, 
}
and the interface condition on $\Gamma$ is $\alpha \bs{u}_+ \cdot \bs{n}_+ - \jump{p}|_{\Gamma} = 0$ with $\alpha >0$. Summarizing these equations and conditions, 
a strong form of equations with a dual mixed formulation of the Darcy flow equation in domain $\Omega$ with fault $\Gamma$ reads:
\gat{
	\label{eq:equations}
	\kappa^{-1} \bs{u}+\nabla p = 0  \text{ in } \Omega, \qquad     \div \bs{u} = f  \text{ in } \Omega,
	\\
	\label{eq:boundary-conditions}
	\bs{u}\cdot\bs{n}=g_N  \text{ on } \Gamma_N, \quad p=g_D  \text{ on } \Gamma_D,
	\\
	\label{eq:interface-condition}
	\alpha \bs{u}_+\cdot\bs{n}_+ - \jump{p} = 0 \text{ on } \Gamma.
}
Throughout this paper we assume that $\alpha$ is constant on $\Gamma$ and 
\algn{ \label{eq:alpha-large}
	\tred{0 < \alpha_0 \le \alpha \le \alpha_1 < \infty}
}
with a uniform \tred{lower and upper bounds $\alpha_0, \alpha_1$}, and we do {\it not} consider the limit cases $\alpha \to 0^+$ or $\alpha \to +\infty$. The limit case $\alpha=0$ becomes the classical Darcy flow problems without fault which does not need the interface condition \eqref{eq:interface-condition}. The $\alpha = +\infty$ case corresponds to the problem that no fluid flows across $\Gamma$ which needs $\bs{u} \cdot \bs{n}|_{\Gamma} = 0$ as an interface condition. This case needs a completely different way to implement the interface condition $\bs{u} \cdot \bs{n}|_{\Gamma} = 0$ with the dual mixed finite element methods, the numerical method that we use in this paper. Therefore, $\alpha = +\infty$ case cannot be covered by the work in this paper. However, our analysis does not need a uniform upper bound of $\alpha$, so the results in the paper are valid for nearly impermeable $\Gamma$, i.e., for arbitrarily large but finite $\alpha$.

Hereafter, we assume $\pd \Omega = \Gamma_D$, $g_D= 0$, $\kappa = 1$ for simplicity of discussions. Let $Q=L^2(\Omega)$, and $H(\div,\Omega)$ be the space of $\R^n$-valued $L^2$ functions on $\Omega$ such that its distributional divergence is in $L^2(\Omega)$. We define 
\begin{align*}
	\bs{V} := \{\bs{v} \in H(\div,\Omega):\bs{v}_+\cdot \bs{n}_+|_\Gamma = -\bs{v}_-\cdot \bs{n}_-|_\Gamma \in L^2(\Gamma)\}
\end{align*}
with two norms
\algn{
	\label{eq:fault-weighted-L2}
    \tnorm{\bs{v} } &= \LRp{ \int_{\Omega} \kappa^{-1} \bs{v} \cdot \bs{v} \,dx + \sum_{F \subset \Gamma} \int_F \alpha (\bs{v}\cdot\bs{n}) (\bs{v}\cdot\bs{n}) \,ds }^\frac12 ,
    \\
	\label{eq:weighted-Hdiv}
    \|\bs{v}\|_{\bs{V}} &= \LRp{ \tnorm{\bs{v}}^2 + \|\div \bs{v}\|_{0}^{2} }^{\frac 12} .
}

By multiplying $\bs{v} \in \bs{V}$ to the first equation in \eqref{eq:equations} and taking the integration by parts with $g_D = 0$, $\kappa=1$, 
\algns{
	&\int_{\Omega} \bs{u} \cdot \bs{v} \,dx + \int_{\Omega} \nabla p \cdot \bs{v} \,dx 
	\\
	&= \int_{\Omega} \bs{u} \cdot \bs{v} \,dx + \int_{\Gamma} p_+  \bs{v}_+ \cdot \bs{n}_+ \,ds + \int_{\Gamma} p_- \bs{v}_- \cdot \bs{n}_- \,ds 
	\\
	&\quad - \int_{\Omega} p \div \bs{v} \,dx .
}
After using $\bs{v}_+\cdot\bs{n}_+ =-\bs{v}_-\cdot\bs{n}_-$ on $\Gamma$, and the interface condition \eqref{eq:interface-condition}, we obtain
%
%
%
\algns{
	\int_\Omega \bs{u}\cdot\bs{v}\,dx - \int_\Omega p\div \bs{v}\,dx+ \int_{\Gamma} \alpha (\bs{u}_+\cdot\bs{n}_+) (\bs{v}_+\cdot\bs{n}_+) \,ds = 0 
}
which can be written as $\LRp{\bs{u}, \bs{v}}_{\Omega} - \LRp{p, \div \bs{v}}_{\Omega} + \LRa{ \alpha \bs{u}_+ \cdot \bs{n}_+, \bs{v}_+ \cdot \bs{n}_-}_{\Gamma}=0$. In the following, we use $\LRa{\alpha \bs{u}\cdot\bs{n},\bs{v}\cdot\bs{n}}_\Gamma$ to denote $\LRa{\alpha \bs{u}_+\cdot\bs{n}_+,\bs{v}_+\cdot\bs{n}_+}_\Gamma$. 
From this and an immediate variational form of the second equation in \eqref{eq:equations}, we have a variational problem to find $(\ub, p)\in \bs{V} \times Q$ such that 
\begin{subequations} \label{eq:variational-eqs}
	\algn{ 
		\label{eq:variational-eq1}
    	\LRp{\bs{u},\bs{v}}_{\Omega} + \LRa{\alpha \bs{u}\cdot\bs{n},\bs{v}\cdot\bs{n}}_{\Gamma} - \LRp{p,\div \bs{v}}_{\Omega}  &= 0 & &\forall\bs{v} \in \bs{V},
    	\\
		\label{eq:variational-eq2}
	    \LRp{\div \bs{u},q}_{\Omega} &= \LRp{f, q}_{\Omega} & &\forall q \in Q.
	}
\end{subequations}
The stability of this system with an inf-sup condition
\algn{ \label{eq:continuous-inf-sup}
	\inf_{\bs{v} \in \bs{V}} \sup_{q \in Q} \frac{\LRp{q, \div \bs{v}}_{\Omega} }{\| \bs{v} \|_{\bs{V}} \| q \|_0} \ge C > 0
}
%
was studied in \cite{Lee-Tan-Villa-Ghattas:2022}. 

\subsection{Discretization with finite elements}

We introduce finite element spaces for discretization. In the rest of the paper we assume that $k \ge 1$ is a fixed integer.
For a $d$-dimensional simplex $D \subset \R^n$ ($d=n,n-1,n-2$), $\mathcal{P}_k(D)$ is the space of polynomials on $D$ of degree $\le k$. Similarly, $\mathcal{P}_k(D; \mathbb{R}^d)$ is the space of $\mathbb{R}^d$-valued polynomials of degree $\le k$ on the $d$-dimensional simplex $D$. 

\tred{Let $\mathcal{T}_h$ be a set of $n$-dimensional simplices whose interiors are disjoint such that if any two simplices $T_1, T_2 \in \mathcal{T}_h$ are not disjoint, then $T_1 \cap T_2$ is a subsimplex of $T_1$ and $T_2$. If $n=3$, let $\mathcal{F}_h$ denote the set of all $(n-1)$-dimensional subsimplices $F$ of the simplices in $\mathcal{T}_h$, and $\mathcal{F}_h^{\partial} = \{ F \in \mathcal{F}_h \,: \, F \subset \partial \Omega\}$. We assume that $\mathcal{T}_h$ is matching with the fracture $\Gamma$ in the sense that $\Gamma = \cup_{F \in \mathcal{F}_h^{\Gamma}} F$ for some $\mathcal{F}_h^{\Gamma} \subset \mathcal{F}_h$, so $\mathcal{F}_h^{\Gamma}$ forms a triangulation of $\Gamma$. 
We also define $\mathcal{F}_h^0$ by $\mathcal{F}_h^0 := \mathcal{F}_h \setminus (\mathcal{F}_h^{\Gamma} \cup \mathcal{F}_h^{\partial})$. If $n=2$, $\mathcal{E}_h$, $\mathcal{E}_h^{\pd}$, $\mathcal{E}_h^{\Gamma}$, $\mathcal{E}_h^0$ are similarly defined. Finally, $h_K$ denotes the diameter of a simplex $K$ which can be a tetrahedron, a triangle, or an edge. }

For given $k\ge 1$ and $n$-dimensional simplex $T\in \mathcal{T}_h$ let us define 
\algn{
	\label{eq:RTN-local}
  \Vb_{k-1}^{RTN}(T) &= \mathcal{P}_{k-1}(T; \mathbb{R}^n) +  \begin{pmatrix}
  x_1 \\
  \vdots \\
  x_n
  \end{pmatrix} \mathcal{P}_{k-1}(T), 
  \\
	\label{eq:BDM-local}
  \bs{V}_k^{BDM}(T) &= \mathcal{P}_k(T; \mathbb{R}^n) .
}
The Raviart--Thomas element (for $n=2$, \cite{RT75}) and the first kind of N\'ed\'elec $H(\div)$ element (for $n=3$, \cite{Nedelec80}) are defined by
\algn{ \label{eq:RTN-space}
  \Vb_h^{RTN} &= \{ \vb \in \bs{V} \,:\, \vb|_T \in \Vb_{k-1}^{RTN}(T), \quad \forall T \in \mathcal{T}_h \} .
}
The Brezzi--Douglas--Marini element (for $n=2$, \cite{BDM85}) and the second kind of N\'ed\'elec $H(\div)$ element (for $n=3$, \cite{Nedelec86}) are defined by 
\algn{ \label{eq:BDM-space}
  \bs{V}_h^{BDM} &= \{ \vb \in \bs{V} \,:\, \vb|_T \in \bs{V}_k^{BDM}(T), \quad \forall T \in \mathcal{T}_h \} .
}
The finite element spaces $Q_h^0$, $Q_h$, $Q_h^*$ are defined by 
\algn{
	\notag 
	Q_h^0 &= \{ q \in Q \,:\, q|_T \in \mathcal{P}_{0}(T) \quad \forall T \in \mathcal{T}_h \} ,
	\\
	\notag
	Q_h &= \{ q \in Q \,:\, q|_T \in \mathcal{P}_{k-1}(T) \quad \forall T \in \mathcal{T}_h \} ,
	\\
	\label{eq:Qhstar-def}
	Q_h^* &= \{ q \in Q \,:\, q|_T \in \mathcal{P}_{k+1}(T) \quad \forall T \in \mathcal{T}_h \} ,
}
and $P_h, P_h^0$ are the $L^2$ orthogonal projections to $Q_h$, $Q_h^0$. For face $F$ or edge $E$ with integer $m\ge 0$, $P_F^m$ and $P_E^m$ are the $L^2$ orthogonal projections to $\mathcal{P}_m(F)$ and $\mathcal{P}_m(E)$.

\tred{For $q \in Q_h$, let $\bs{w}\in H^1(\Omega;\mathbb{R}^n)$ be a function satisfying \eqref{eq:assumption1}. It is well-known that the interpolation operator $\Pi_h:H^1(\Omega; \mathbb{R}^n) \rightarrow \bs{V}_h$ defined by the canonical degrees of freedom fulfills $\div \Pi_h \bs{w} = q$, $\Pi_h \bs{w} \cdot \bs{n} = 0$ on $\Gamma$, and $\|\Pi_h \bs{w} \|_0 + \| \div \Pi_h \bs{w} \|_0 \le C \| \bs{w} \|_1$ for some $C>0$ (cf. \cite{Brezzi-Fortin-book}). This is sufficient to prove that the pair $(\bs{V}_h, Q_h)$ satisfies } 
%
\algn{ \label{eq:inf-sup}
	\inf_{0 \not = q \in Q_h} \sup_{0 \not = \bs{v} \in \bs{V}_h} \frac{\LRp{ q, \div \bs{v}}_{\Omega} }{\| q \|_0 \| \bs{v} \|_{\bs{V}} } \ge C > 0
}
with $C>0$ independent of mesh sizes. 
In the rest of this paper our discussions are common for $\bs{V}_h = \bs{V}_h^{RTN}$ or $\bs{V}_h = \bs{V}_h^{BDM}$ unless we specify $\bs{V}_h$ in our statements.

The discrete problem of \eqref{eq:variational-eqs} with $\bs{V}_h \times Q_h$ is to find $(\bs{u}_h, p_h) \in \bs{V}_h \times Q_h$ such that 
\begin{subequations} \label{eq:discrete-eqs}
	\algn{ 
		\label{eq:discrete-eq1}
    	\LRp{\bs{u}_h,\bs{v}}_{\Omega} + \LRa{\alpha \bs{u}_h\cdot\bs{n},\bs{v}\cdot\bs{n}}_{\Gamma} - \LRp{p_h,\div \bs{v}}_{\Omega}  &= 0 & & \forall\bs{v} \in \bs{V}_h,
    	\\
		\label{eq:discrete-eq2}
	    \LRp{\div \bs{u}_h,q}_{\Omega} &= \LRp{f, q}_{\Omega} & & \forall q \in Q_h.
	}
\end{subequations}
An a priori error analysis for $(\bs{u}_h, p_h)$ 
is proved in \cite{Lee-Tan-Villa-Ghattas:2022}.

We finish this section by introducing a post-processed numerical solution of $p$, a variant of the post-processing in \cite{Stenberg:1991}.
Suppose that $(\bs{u}_h, p_h) \in \bs{V}_h \times Q_h$ is a solution of \eqref{eq:discrete-eqs}. Following the idea in \cite{Kim:2012,Cockburn-Zhang:2014}, 
a post-processed solution $p_h^* \in Q_h^*$ is defined by 
\begin{subequations} \label{eq:post-processing-eqs}
\algn{
	\label{eq:post-processing-eq1}
	\LRp{ \nabla p_h^*, \nabla q}_{T} &= -\LRp{ \ub_h, \nabla q}_T & & \forall q \in Q_h^* (T) , 
	\\
	\label{eq:post-processing-eq2}	
	\LRp{ p_h^*, q}_T &= \LRp{ P_h^0 p_h, q}_T & & \forall q \in Q_h^0 (T)
}
\end{subequations}
for $T \in \mathcal{T}_h$. 

Here we remark that $Q_h^*$ in \eqref{eq:Qhstar-def} contains all piecewise quadratic polynomials because $k \ge 1$. 
This will be used in the proof of Lemma~\ref{lemma:jump-mean-value-zero} which allows an elegant local efficiency proof of a posteriori error estimator.


\section{A posteriori error estimate}

In this section we define a posteriori error estimator and prove its reliability and local efficiency. Let $(\bs{u}_h, p_h)$ be a solution of \eqref{eq:discrete-eqs} and $p_h^*$ be the post-processed pressure defined in \eqref{eq:post-processing-eqs}. Let 
\algn{ \label{eq:l-def}
	m = k-1 \quad \text{ if }\bs{V}_h = \bs{V}_h^{RTN} \quad \text{ and } \quad m = k \quad \text{ if }\bs{V}_h = \bs{V}_h^{BDM} .
}
Then, a posteriori error estimator $\eta$ for $n=3$ is defined by 
\algn{ \label{eq:eta-def}
	\eta = \LRp{\sum_{T \in \mathcal{T}_h} \eta_T^2 + \sum_{F \in \mathcal{F}_h} \eta_F^2}^{\frac 12}
}
where 
\algn{
	\label{eq:eta-T-def}
	\eta_T &:= \| \bs{u}_h + \nabla p_h^* \|_{0,T},
	\\
	\label{eq:eta-F-def}
	\eta_F &:= 
		\begin{cases}
			h_F^{-1/2} \| \jump{p_h^*} \|_{0,F} & \text{ if } F \in \mathcal{F}_h^0
			\\
			\alpha^{-1/2} \| (I - P_F^m) \jump{p_h^*} \|_{0,F} & \text{ if } F \in \mathcal{F}_h^{\Gamma} 
		\end{cases} .
}
If $n=2$, $\eta$ is similarly defined by replacing $\eta_F$ by $\eta_E$ for edges in $\mathcal{E}_h^0$ and $\mathcal{E}_h^{\Gamma}$ with the same formula in \eqref{eq:eta-F-def}, so we omit the detailed definition.

\subsection{Reliability estimate}
We prove the reliability of the a posteriori error estimator $\eta$ in \eqref{eq:eta-def}.
The main result is the following. 
\begin{thm} Suppose that $(\ub, p)$, $(\bs{u}_h, p_h)$ are solutions of \eqref{eq:variational-eqs}, \eqref{eq:discrete-eqs}, and $\eta$ is defined by \eqref{eq:eta-def}. Then, there exists $C>0$ independent of mesh sizes and $\alpha$ in \eqref{eq:alpha-large} such that 
	\algn{ \label{eq:velocity-reliability}
		\tnorm{\bs{u} - \bs{u}_h } \le C \eta +  \frac {1}{\pi} \LRp{ \sum_{T \in \mathcal{T}_h} \osc(f,T)^2 }^{\frac 12}, \quad \osc(f,T):= {h_T} \| f - P_h f \|_{0,T}.
	}
\end{thm}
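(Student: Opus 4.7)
The plan is to combine an interface-adapted Helmholtz-type decomposition of the velocity error $e_{\bs u}:=\bs u-\bs u_h$ with a Scott--Zhang type interpolation $I_h\colon\bs V\to\bs V_h$ that preserves the relevant normal-trace moments on the interface $\Gamma$. Since $\div\bs V_h=Q_h$, the discrete mass conservation \eqref{eq:discrete-eq2} forces $\div e_{\bs u}=f-P_h f$. I would decompose $e_{\bs u}=\bs z+\nabla\phi$, with $\phi$ the weak solution of an interface-adapted Poisson problem (homogeneous Dirichlet data on $\partial\Omega$ and a weighted jump condition on $\Gamma$ chosen so that $\nabla\phi\in\bs V$), and $\bs z\in\bs V$ divergence-free. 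An orthogonality built into this adapted decomposition would yield $\tnorm{e_{\bs u}}^2\lesssim\tnorm{\bs z}^2+\|\nabla\phi\|_0^2$. Testing the Poisson equation for $\phi$ against $\phi$, and exploiting the elementwise $L^2$-orthogonality of $f-P_h f$ to $Q_h^0\subset Q_h$ (so that $P_h^0\phi$ can be subtracted for free), one obtains $\|\nabla\phi\|_0^2=\sum_T(f-P_h f,\phi-P_h^0\phi)_T$. Applying the Payne--Weinberger--Bebendorf inequality $\|\phi-P_h^0\phi\|_{0,T}\le(h_T/\pi)\|\nabla\phi\|_{0,T}$ on each convex simplex $T$ then gives exactly
\begin{equation*}
\|\nabla\phi\|_0\le\pi^{-1}\Bigl(\textstyle\sum_{T\in\mathcal T_h}\osc(f,T)^2\Bigr)^{1/2},
\end{equation*}
accounting for the oscillation contribution in \eqref{eq:velocity-reliability}.

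The divergence-free component $\bs z$ is then handled by consistency. The error identity is $(e_{\bs u},\bs v)_\Omega+\LRa{\alpha e_{\bs u}\cdot\bs n,\bs v\cdot\bs n}_\Gamma=(p-p_h,\div\bs v)_\Omega$ for every $\bs v\in\bs V$. Taking $\bs v=\bs z$ annihilates the right-hand side since $\div\bs z=0$; but $\bs z\notin\bs V_h$, so invoking \eqref{eq:discrete-eq1} requires an interpolant $I_h\bs z\in\bs V_h$. Combining the two equations and integrating by parts element-by-element, using the Darcy identity $\bs u=-\nabla p$ to eliminate the true pressure in favor of the post-processed $p_h^*$, converts the consistency residual into
\begin{equation*}
\tnorm{\bs z}^2\le\sum_T(\bs u_h+\nabla p_h^*,\bs z-I_h\bs z)_T+\sum_{F\in\mathcal F_h^0}\LRa{\jump{p_h^*},(\bs z-I_h\bs z)\cdot\bs n}_F+\sum_{F\in\mathcal F_h^\Gamma}\LRa{(I-P_F^m)\jump{p_h^*},(\bs z-I_h\bs z)\cdot\bs n}_F,
\end{equation*}
where the projector $(I-P_F^m)$ on interface faces appears precisely because $I_h$ is designed to preserve the degree-$m$ normal-trace moments on $\mathcal F_h^\Gamma$. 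Standard Scott--Zhang approximation $\|\bs z-I_h\bs z\|_{0,T}\lesssim h_T\|\bs z\|_{1,\omega_T}$ and the companion trace estimate $\|(\bs z-I_h\bs z)\cdot\bs n\|_{0,F}\lesssim h_F^{1/2}\|\bs z\|_{1,\omega_F}$, together with an elliptic-regularity bound $\|\bs z\|_{1,\Omega_\pm}\lesssim\tnorm{\bs z}$ for the Helmholtz component, would then recover the $\eta_T$ and $\eta_F$ contributions; on interface faces the weight $\alpha^{-1/2}$ emerges by pairing $(I-P_F^m)\jump{p_h^*}$ with $\alpha^{1/2}(\bs z\cdot\bs n)$ and absorbing the latter into $\tnorm{\bs z}$. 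A final Young's inequality delivers the desired bound.

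The main obstacle will be the construction of $I_h$: it must be $H(\div)$-conforming, commute with the divergence (so that $\div\bs z=0$ truly kills the pressure contribution in the consistency step), and preserve the degree-$m$ normal-trace moments on $\mathcal F_h^\Gamma$ (so that $I-P_F^m$ arises on interface faces), while retaining standard local $L^2$ and trace approximation uniformly in $\alpha$. The companion task of setting up the interface-adapted Helmholtz decomposition so that the cross terms in the $\alpha$-weighted piece of $\tnorm{\cdot}$ are benign—and the resulting $\bs z$ has the $H^1$-stability required by the Scott--Zhang estimates above—is the other serious technical ingredient.
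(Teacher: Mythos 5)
Your overall architecture matches the paper's: split the error into a gradient part carrying $\div(\bs u-\bs u_h)=f-P_hf$ (bounded by the oscillation via the Payne--Weinberger--Bebendorf inequality, which is exactly how the paper treats $\bs u-\tilde{\bs u}$ with $\tilde{\bs u}$ the solution of the auxiliary problem with datum $P_hf$) and a divergence-free part bounded by $\eta$ through Galerkin orthogonality and elementwise/facewise integration by parts. The decisive gap is in your treatment of the divergence-free component $\bs z$. You interpolate $\bs z$ directly by an $H(\div)$-conforming Scott--Zhang operator and then invoke $\|\bs z-I_h\bs z\|_{0,T}\lesssim h_T\|\bs z\|_{1,\omega_T}$ together with ``an elliptic-regularity bound $\|\bs z\|_{1,\Omega_\pm}\lesssim\tnorm{\bs z}$.'' That bound does not hold: a divergence-free field in $\bs V$ is merely $L^2$, and no $H^1$ control follows from $\tnorm{\cdot}$ (indeed the true flux is singular at the fault tips, as the paper's numerics show, so even the exact divergence-free error component is not in $H^1$ near $\partial\Gamma$). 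Without it, every elementwise and facewise residual term in your consistency identity is unbounded. The paper avoids this entirely by writing the divergence-free part as $\curl\Psi$ with a \emph{potential} $\Psi\in H^1$ satisfying $\|\Psi\|_1\lesssim\|\bs u_h-\tilde{\bs u}\|_0$ (regular-potential/Helmholtz result on a homologically trivial domain), and then interpolating $\Psi$ — not $\curl\Psi$ — into an auxiliary $H(\curl)$-conforming space $S_h$ with $\curl S_h\subset\bs V_h$. Constructing that interface-adapted interpolation, and in the case $n=3$, $\bs V_h=\bs V_h^{RTN}$, enriching the Nédélec space with curl-free edge bubbles so that the degree-$(k+1)$ tangential edge moments on $\mathcal E_h^\Gamma$ can be matched (which is what produces $(I-P_F^m)\jump{p_h^*}$ on interface faces after the facewise integration by parts), is the technical core of the paper and is absent from your sketch; you correctly identify the construction of $I_h$ as ``the main obstacle,'' but the obstacle is not surmountable in the $H(\div)$ setting you propose.

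Two secondary points. First, your ``error identity'' $(e_{\bs u},\bs v)_\Omega+\LRa{\alpha e_{\bs u}\cdot\bs n,\bs v\cdot\bs n}_\Gamma=(p-p_h,\div\bs v)_\Omega$ is not valid for all $\bs v\in\bs V$, since \eqref{eq:discrete-eq1} holds only for $\bs v\in\bs V_h$; the correct route (which the paper follows) is to use the continuous equation for $\bs u$ and handle the $\bs u_h$ terms by elementwise integration by parts against $p_h^*$. Second, your orthogonal decomposition is taken in the $\tnorm{\cdot}$ inner product, so the gradient part contributes $\tnorm{\nabla\phi}^2$, including the $\alpha$-weighted interface term $\|\alpha^{1/2}\nabla\phi\cdot\bs n\|_{0,\Gamma}^2$, not just $\|\nabla\phi\|_0^2$; your Poincar\'e step controls only the latter. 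The paper's formulation of the auxiliary problem \eqref{eq:variational-project-eqs} is arranged precisely so that testing with $\bs v=\bs u-\tilde{\bs u}$ produces the full $\tnorm{\bs u-\tilde{\bs u}}^2$ on the left-hand side, which your Poisson-based construction would need to replicate.
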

To prove this theorem, we split $\bs{u} - \bs{u}_h$ into two components which are orthogonal with an inner product given by the bilinear form of $\bs{u}$ and $\bs{v}$ in \eqref{eq:variational-eq1}. In the lemma below, we first show that there is an orthogonal decomposition of $\bs{u} - \bs{u}_h$. 
\begin{lemma} \label{lemma:orthogonal-decomposition}
	Suppose that $(\tilde{\ub}, \tilde{p})$ is the solution of 
	\begin{subequations}
		\label{eq:variational-project-eqs}
		\algn{ 
			\label{eq:variational-project-eq1}
    		\LRp{\tilde{\bs{u}},\bs{v}}_{\Omega}- \LRp{\tilde{p}, \div \bs{v}}_{\Omega} + \LRa{\alpha 	\tilde{\bs{u}}\cdot\bs{n},\bs{v}\cdot\bs{n}}_{\Gamma} &= 0 & &\forall\bs{v} \in \bs{V},
    		\\
			\label{eq:variational-project-eq2}
		    \LRp{\div \tilde{\bs{u}}, q}_{\Omega} &= \LRp{P_h f, q}_{\Omega} & &\forall q \in Q.
		}
	\end{subequations}
	Then, 
	\algn{ \label{eq:orthogonal-decomposition}
		\tnorm{\bs{u} - \bs{u}_h}^2 = \tnorm{\bs{u} - \tilde{\bs{u}}}^2 + \tnorm{ \tilde{\bs{u}} - \bs{u}_h }^2.
	}
\end{lemma}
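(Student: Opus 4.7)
The approach is to recognize that the bilinear form
$a(\bs{v},\bs{w}) := (\bs{v},\bs{w})_{\Omega} + \LRa{\alpha\,\bs{v}\cdot\bs{n},\bs{w}\cdot\bs{n}}_{\Gamma}$
appearing on the left of \eqref{eq:variational-eq1}, \eqref{eq:variational-project-eq1} and \eqref{eq:discrete-eq1} is precisely the inner product whose induced norm is $\tnorm{\cdot}$. Thus \eqref{eq:orthogonal-decomposition} is just the Pythagorean identity obtained from the splitting $\bs{u}-\bs{u}_h = (\bs{u}-\tilde{\bs{u}}) + (\tilde{\bs{u}}-\bs{u}_h)$, and the whole content of the lemma reduces to proving the single Galerkin-type orthogonality $a(\bs{u}-\tilde{\bs{u}},\,\tilde{\bs{u}}-\bs{u}_h) = 0$.

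First, I would subtract \eqref{eq:variational-project-eq1} from \eqref{eq:variational-eq1} to obtain the error equation $a(\bs{u}-\tilde{\bs{u}},\bs{v}) = (p-\tilde{p},\div \bs{v})_{\Omega}$ for every $\bs{v}\in\bs{V}$. Since $\bs{V}_h\subset\bs{V}$, the difference $\tilde{\bs{u}}-\bs{u}_h$ is an admissible test function, and plugging it in reduces the cross term to $(p-\tilde{p},\,\div(\tilde{\bs{u}}-\bs{u}_h))_{\Omega}$.

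Second, I would show that $\div(\tilde{\bs{u}}-\bs{u}_h)=0$ as an $L^2(\Omega)$ function. Equation \eqref{eq:variational-project-eq2} is tested against all of $Q=L^2(\Omega)$, so it yields $\div\tilde{\bs{u}}=P_h f$ pointwise. Equation \eqref{eq:discrete-eq2} only tests against $Q_h$ and therefore gives $P_h(\div \bs{u}_h) = P_h f$; however, the divergence-surjectivity property $\div\bs{V}_h = Q_h$ recalled in the paragraph preceding \eqref{eq:inf-sup} ensures $\div \bs{u}_h\in Q_h$, so $\div \bs{u}_h$ coincides with its own projection $P_h f$. Subtracting the two identities gives the desired divergence-free cancellation, the cross term vanishes, and expanding $\tnorm{(\bs{u}-\tilde{\bs{u}})+(\tilde{\bs{u}}-\bs{u}_h)}^2$ delivers \eqref{eq:orthogonal-decomposition}.

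There is no serious obstacle; the one conceptual point to flag is that the exact sequence property $\div\bs{V}_h = Q_h$ is doing essential work in step two. Without it one would only have $P_h(\div \bs{u}_h)=P_h f$, which would leave an uncontrolled component of $\div(\tilde{\bs{u}}-\bs{u}_h)$ and destroy the Pythagorean splitting. This is also the reason the auxiliary problem \eqref{eq:variational-project-eqs} is posed with right-hand side $P_h f$ rather than $f$: it is engineered precisely so that $\tilde{\bs{u}}$ and $\bs{u}_h$ share the same divergence.
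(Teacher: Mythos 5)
Your proposal is correct and follows essentially the same route as the paper: subtract the two variational problems, test the resulting error equation with $\tilde{\bs{u}}-\bs{u}_h$, observe that $\div(\tilde{\bs{u}}-\bs{u}_h)=0$ so the pressure term drops, and conclude by the Pythagorean identity for the $\tnorm{\cdot}$ inner product. In fact you supply a justification the paper leaves implicit, namely that $\div\bs{u}_h=P_hf$ exactly (not just in projection) because $\div\bs{V}_h=Q_h$, which is precisely the point that makes the cross term vanish.
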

\begin{proof}
	Then, it is easy to see that 
	\begin{subequations}
		\label{eq:variational-diff-eqs}
		\algn{ 
    		\label{eq:variational-diff-eq1}
			\LRp{\bs{u} - \tilde{\bs{u}},\bs{v}}_{\Omega}- \LRp{p - \tilde{p}, \div \bs{v}}_{\Omega} + \LRa{\alpha (\bs{u} - \tilde{\bs{u}}) \cdot\bs{n},\bs{v}\cdot\bs{n}}_{\Gamma} &= 0 ,
	    	\\
			\label{eq:variational-diff-eq2}    	
	    	\LRp{\div (\bs{u} - \tilde{\bs{u}}), q}_{\Omega} &= \LRp{f - P_h f, q}_{\Omega} 
		}
	\end{subequations}
	for all $\bs{v} \in \bs{V}$ and $q \in Q$.
	If $\bs{v} = \tilde{\bs{u}} - \bs{u}_h \in \bs{V}$, then 
	\algns{ 
		\LRp{\bs{u} - \tilde{\bs{u}},\tilde{\bs{u}} - \bs{u}_h}_{\Omega} + \LRa{\alpha (\bs{u} - \tilde{\bs{u}}) \cdot\bs{n},(\tilde{\bs{u}} - \bs{u}_h)\cdot\bs{n}}_{\Gamma} = 0 
	}
	because $\div (\tilde{\bs{u}} - \bs{u}_h) = 0$. Then, \eqref{eq:orthogonal-decomposition} follows from this orthogonality.
\end{proof}
As a consequence of \eqref{eq:orthogonal-decomposition}, it suffices to estimate $\tnorm{\bs{u} - \tilde{\bs{u}}}^2$ and $\tnorm{\bs{u}_h - \tilde{\bs{u}}}^2$ by the right-hand side terms in \eqref{eq:velocity-reliability}. We first estimate $\tnorm{\bs{u} - \tilde{\bs{u}}}$. 

\begin{lemma} 
	Suppose that $(\tilde{\ub}, \tilde{p})$ is defined as in Lemma~\ref{lemma:orthogonal-decomposition}. Then, 
	\algn{ \label{eq:u-utilde-estm}
		\tnorm{ \bs{u} - \tilde{\bs{u}}} \le \frac {1}{\pi} \LRp{\sum_{T \in \mathcal{T}_h} \osc(f,T)^2}^{\frac 12} .
	}
\end{lemma}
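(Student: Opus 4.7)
The plan is to test the velocity error equation with $\bs{v} = \bs{u} - \tilde{\bs{u}}$ itself, rewrite the resulting $L^2$ pairing using the divergence error equation \eqref{eq:variational-diff-eq2}, and then reduce $(p - \tilde{p}, f - P_h f)_\Omega$ to a sum of element-local terms using the sharp Poincar\'e inequality on each convex simplex.

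First I would set $\bs{e}_u = \bs{u} - \tilde{\bs{u}}$ and $e_p = p - \tilde{p}$ and take $\bs{v} = \bs{e}_u \in \bs{V}$ in \eqref{eq:variational-diff-eq1}, giving $\tnorm{\bs{e}_u}^2 = (e_p, \div \bs{e}_u)_\Omega$. Then \eqref{eq:variational-diff-eq2} with $q = e_p \in Q$ converts the right side to $(e_p, f - P_h f)_\Omega$. Since $\mathcal{P}_0(T) \subset Q_h$ and $f - P_h f$ is $L^2$-orthogonal to $Q_h$, for any element-wise constants $c_T$ one has $(e_p, f - P_h f)_\Omega = \sum_{T \in \mathcal{T}_h}(e_p - c_T, f - P_h f)_T$, and I would choose $c_T = |T|^{-1}\int_T e_p\,dx$.

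The key element-wise regularity fact is that $\nabla e_p = -\bs{e}_u$ a.e. on each $T$: testing \eqref{eq:variational-diff-eq1} against any smooth $\bs{v}$ compactly supported inside a single $T$ (which trivially lies in $\bs{V}$ and kills the interface term) shows $e_p \in H^1(T)$ with the stated gradient. With $c_T$ the mean of $e_p$ on $T$, the Payne--Weinberger inequality on the convex simplex $T$ yields $\|e_p - c_T\|_{0,T} \le (h_T/\pi)\|\nabla e_p\|_{0,T} = (h_T/\pi)\|\bs{e}_u\|_{0,T}$. Applying Cauchy--Schwarz on each $T$, then globally over $T$, and using $\|\bs{e}_u\|_0 \le \tnorm{\bs{e}_u}$ (since $\kappa = 1$ and the interface term is nonnegative), gives
\algns{
\tnorm{\bs{e}_u}^2 \le \frac{1}{\pi}\tnorm{\bs{e}_u}\LRp{\sum_{T \in \mathcal{T}_h}\osc(f,T)^2}^{\frac{1}{2}},
}
and dividing through yields \eqref{eq:u-utilde-estm}.

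The only mildly delicate point is the element-wise identification $\nabla e_p = -\bs{e}_u$, since $e_p$ a priori sits in $Q = L^2(\Omega)$; everything else is standard duality together with the sharp Poincar\'e constant $h_T/\pi$ on convex domains, which is precisely what delivers the explicit factor $1/\pi$ in \eqref{eq:u-utilde-estm}.
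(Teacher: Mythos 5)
Your proposal is correct and follows essentially the same route as the paper: test the error equation with $\bs{u}-\tilde{\bs{u}}$, convert $(p-\tilde p,\div(\bs{u}-\tilde{\bs{u}}))_\Omega$ to $(p-\tilde p, f-P_hf)_\Omega$ via \eqref{eq:variational-diff-eq2}, identify $\nabla(p-\tilde p)$ with $\mp(\bs{u}-\tilde{\bs{u}})$ element-wise by localized testing, and invoke the sharp Payne--Weinberger/Bebendorf constant $h_T/\pi$ after subtracting element-wise means (the paper subtracts $P_h(p-\tilde p)$, which is equivalent since $Q_h$ contains the constants). The only cosmetic difference is the sign in the local gradient identity, which is immaterial to the norm estimate.
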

\begin{proof}
	Taking arbitrary test function vanishing near $\Gamma$ in \eqref{eq:variational-diff-eq1}, we obtain
	\algns{
		\bs{u} - \tilde{\bs{u}} = \nabla (p - \tilde{p}) \quad \text{ in } {L^2(\Omega_0; \mathbb{R}^n)}
	}
	for every open set $\Omega_0$ such that $\overline{\Omega_0} \subset \Omega \setminus \Gamma$. By the dominated convergence theorem, $\bs{u} - \tilde{\bs{u}} = \nabla(p - \tilde{p})$ on every $T \in \mathcal{T}_h$.
	If $\bs{v} = \bs{u} - \tilde{\bs{u}}$ in \tred{\eqref{eq:variational-diff-eq1}}, then 
	\algn{
		\label{eq:u-utilde-diff-estm1}
		\tnorm{ \bs{u} - \tilde{\bs{u}}}^2 = \LRp{p - \tilde{p}, f - P_h f }_{\Omega} 
	}
	by \eqref{eq:fault-weighted-L2} and \eqref{eq:variational-diff-eq2}. By the Cauchy--Schwarz and element-wise Poincare inequalities for mean-value zero functions (cf. \cite{Bebendorf:2003}), 
	\algn{
		\notag 
		\LRp{p - \tilde{p}, f - P_h f }_{\Omega} &= \LRp{p - \tilde{p} - P_h(p - \tilde{p}), f - P_h f }_{\Omega}
		\\
		\notag &\le \sum_{T \in \mathcal{T}_h} \frac{h_T}{\pi} \| \nabla (p - \tilde{p}) \|_{0,T} \| f - P_h f \|_{0,T}
		\\
		\notag
		&\le \frac 1{\pi} \| \nabla (p - \tilde{p}) \|_{0} \LRp{ \sum_{T \in \mathcal{T}_h} {h_T^2} \| f - P_h f \|_{0,T}^2 }^{\frac 12} 
		\\
		\label{eq:u-utilde-diff-estm2}
		&\le \frac {1}{\pi} \| \bs{u} - \tilde{\bs{u}} \|_{0} \LRp{\sum_{T \in \mathcal{T}_h} \osc(f,T)^2}^{\frac 12} . 
	}
	Combining \eqref{eq:u-utilde-diff-estm1} and \eqref{eq:u-utilde-diff-estm2}, we can obtain \eqref{eq:u-utilde-estm}.
\end{proof}

To estimate $\tnorm{\bs{u}_h - \tilde{\bs{u}}}$ by the a posteriori error estimator $\eta$, we need an auxiliary finite element space $S_h$.
We choose different $S_h$ for $\bs{V}_h = \bs{V}_h^{RTN}$ and $\bs{V}_h = \bs{V}_h^{BDM}$ and for $n=2,3$.

We first define $S_h$ for $\bs{V}_h^{BDM}$.
Note that $\bs{V}_h^{BDM} = \mathcal{P}_k \Lambda^{n-1}(\mathcal{T}_h)$ in the language of the finite element exterior calculus \tred{(\cite{AFW05-1,AFW06,AFW10})}. If $\bs{V}_h = \bs{V}_h^{BDM}$, then 
\algn{ \label{eq:Sh-def1}
	S_h = \mathcal{P}_{k+1} \Lambda^{n-2}(\mathcal{T}_h), 
}
which is the Lagrange finite element of degree $k+1$ if $n=2$ and is the N\'{e}d\'{e}lec edge element of the 2nd kind (\cite{Nedelec86}) with degree $k+1$ if $n=3$.

If $\bs{V}_h = \bs{V}_h^{RTN}$ and $n=2$, then 
\algn{ \label{eq:Sh-def2}
	S_h = \mathcal{P}_{k} \Lambda^{0}(\mathcal{T}_h) ,
}
the Lagrange finite element of degree $k$. 

If $\bs{V}_h = \bs{V}_h^{RTN}$ and $n=3$, then we define a new finite element space $S_h$ obtained by enriching $\mathcal{P}_{k} \Lambda^{1}(\mathcal{T}_h)$ with curl-free edge bubble functions which will be described below.

For an edge $E$ in a triangulation $\mathcal{T}_h$ for $n=3$, define $M_E$ by
\algn{
	M_E = \bigcup_{T \in \mathcal{T}_h, E \subset \pd T} T.
}
Note that each face $F \subset \partial M_E$ does not contain $E$. Denoting the barycentric coordinate which vanishes on $F$ by $\lambda_F$, we define $b_E$ by 
\algn{
	b_E = \prod_{F \subset \pd M_E} \lambda_F .
}
Since every tetrahedron in $M_E$ does not have more than two distinct faces which do not contain $E$, $b_E|_T \in \mathcal{P}_2(T)$ for every tetrahedron $T \subset M_E$. In the discussion below, $\mathcal{P}_{k+1}^{\perp}(E)$ is the space of polynomials with degree $(k+1)$ on $E$ which are orthogonal to all polynomials with degree $k$ on $E$, and $\bar{\mathcal{P}}_{k+1}^{\perp}(E)$ is the space of polynomials on $M_E$ which are constant on every plane perpendicular to $E$ and the restriction of the polynomials on $E$ are in $\mathcal{P}_{k+1}^{\perp}(E)$.

In the following lemma, for an edge $E$, $\bs{t}_E$ is a unit tangential vector of $E$ and $\frac{\pd}{\pd \bs{t}_E}$ is the derivative along the direction of $\bs{t}_E$.
\begin{lemma}
	For an edge $E$, 
	\algn{ \label{eq:edge-bubble-def}
		\mathcal{B}(E) = \spn \left\{ \nabla \LRp{\frac{\pd q}{\pd \bs{t}_E} b_E} \,:\, q \in \bar{\mathcal{P}}_{k+1}^{\perp}(E) \right\} .	
	}
	For a tetrahedron $T$ and a fixed face $\tilde{F} \subset \pd T$ let $S(T,\tilde{F})$ be 
	\algns{
		S(T, \tilde{F}) = \mathcal{P}_{k}(T; \mathbb{R}^3) + \oplus_{E \subset \pd \tilde{F}} \mathcal{B}(E) .
	}
	and a set of local degrees of freedom for $\tau \in S(T,\tilde{F})$ is given by
	\algn{
		\label{eq:Sh-dof1}
		\tau &\longmapsto \int_{E} \tau \cdot \bs{t}_E \tilde{q} \,dl	& &  \forall \tilde{q} \in \mathcal{P}_k(E) \text{ if } E \not \subset \pd \tilde{F},
		\\
		\label{eq:Sh-dof2}
		\tau &\longmapsto \int_{E} \tau \cdot \bs{t}_E \tilde{q} \,dl	& & \forall \tilde{q} \in \mathcal{P}_{k+1}(E) \text{ if } E \subset \pd \tilde{F},
		\\
		\label{eq:Sh-dof3}
		\tau &\longmapsto \int_{F} (\tau \times \bs{n}_F) \cdot \bs{q} \,ds & & \forall \bs{q} \in \bs{V}_{k-1}^{RTN}(F) \text{ if } k \ge 2,
		\\
		\label{eq:Sh-dof4}
		\tau &\longmapsto \int_{T} \tau \cdot \xi \,dx	 & & \forall \xi \in \bs{V}_{k-2}^{RTN}(T) \text{ if } k \ge 3.
	}
	Then, $\tau \in S(T, \tilde{F})$ is uniquely determined by \eqref{eq:Sh-dof1}, \eqref{eq:Sh-dof2}, \eqref{eq:Sh-dof3}, \eqref{eq:Sh-dof4}.
\end{lemma}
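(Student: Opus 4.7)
The plan is the standard unisolvence argument: a dimension count followed by proving that vanishing of all DOFs forces $\tau = 0$. Since $\bar{\mathcal{P}}_{k+1}^{\perp}(E)$ is one-dimensional (it is the $L^2$-complement of $\mathcal{P}_k(E)$ inside $\mathcal{P}_{k+1}(E)$, extended trivially in the transverse direction), each $\mathcal{B}(E)$ has dimension at most one, so
\[
\dim S(T,\tilde{F}) \le \dim \mathcal{P}_k(T;\mathbb{R}^3) + 3.
\]
On the DOF side, \eqref{eq:Sh-dof1}, \eqref{eq:Sh-dof3} and \eqref{eq:Sh-dof4}, together with \eqref{eq:Sh-dof2} restricted to $\mathcal{P}_k(E)$ on each $E \subset \pd \tilde{F}$, are exactly the $\dim \mathcal{P}_k(T;\mathbb{R}^3)$ moments of the Nedelec second-kind element of degree $k$ on $T$. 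The remaining $\mathcal{P}_{k+1}^{\perp}(E)$ test direction on each of the three edges of $\tilde{F}$ contributes three further functionals, giving $\dim \mathcal{P}_k(T;\mathbb{R}^3) + 3$ DOFs in total. So both inequalities will close once the DOFs are shown to be linearly independent on $S(T,\tilde{F})$.

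The crux is the behaviour of a generator $\beta_E := \nabla\!\bigl(\tfrac{\pd q}{\pd \bs{t}_E} b_E\bigr)$, $q \in \bar{\mathcal{P}}_{k+1}^{\perp}(E)$, along the edges of $T$. Restricted to $T$, one has $b_E = \lambda_{F_1}\lambda_{F_2}$, where $F_1,F_2$ are the two faces of $T$ not containing $E$; this product vanishes on every edge $E' \neq E$ of $T$, so $\beta_E \cdot \bs{t}_{E'}|_{E'} = 0$ on all such $E'$. Parameterising $E$ by affine arclength $s \in [0,1]$ yields $b_E|_E = s(1-s)$, hence $\beta_E \cdot \bs{t}_E|_E = \tfrac{d}{ds}\bigl(q'(s)\,s(1-s)\bigr)$. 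Two integrations by parts (whose boundary terms vanish because $s(1-s)$ vanishes at $s = 0, 1$) give, for any $\tilde{q} \in \mathcal{P}_k(E)$,
\[
\int_E \beta_E \cdot \bs{t}_E \, \tilde{q} \, ds = \int_E q(s) \, \bigl(s(1-s)\tilde{q}'(s)\bigr)' \, ds = 0,
\]
since $(s(1-s)\tilde{q}')'$ has degree $\le k$ and $q|_E \in \mathcal{P}_{k+1}^{\perp}(E)$. Thus $\beta_E \cdot \bs{t}_E|_E \in \mathcal{P}_{k+1}^{\perp}(E)$, and tracking the $s^{k+1}$-coefficient of $\tfrac{d}{ds}(q'(s)\,s(1-s))$ shows that the map $q \mapsto \beta_E \cdot \bs{t}_E|_E$ is a nonzero endomorphism of the one-dimensional space $\mathcal{P}_{k+1}^{\perp}(E)$, hence an isomorphism.

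With these tools in hand I would decompose any $\tau \in S(T,\tilde{F})$ with vanishing DOFs as $\tau = \tau_0 + \sum_{E \subset \pd \tilde{F}} \beta_E$ with $\tau_0 \in \mathcal{P}_k(T;\mathbb{R}^3)$, and conclude in stages. Because the bubbles are tangentially zero on every edge $E'' \not\subset \pd \tilde{F}$, \eqref{eq:Sh-dof1} sees only $\tau_0$ and forces $\tau_0 \cdot \bs{t}_{E''}|_{E''} = 0$. On $E \subset \pd \tilde{F}$, the $\mathcal{P}_k(E)$-part of \eqref{eq:Sh-dof2} annihilates $\beta_E$ by the orthogonality just proved and therefore forces $\tau_0 \cdot \bs{t}_E|_E = 0$; the remaining $\mathcal{P}_{k+1}^{\perp}(E)$ direction of \eqref{eq:Sh-dof2} then forces $\beta_E = 0$ via the isomorphism. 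What is left is $\tau = \tau_0 \in \mathcal{P}_k(T;\mathbb{R}^3)$ with zero tangential trace on every edge of $T$ and vanishing moments \eqref{eq:Sh-dof3}--\eqref{eq:Sh-dof4}, and the classical unisolvence of the Nedelec second-kind element \cite{Nedelec86} concludes $\tau_0 = 0$.

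The step I expect to be the main obstacle is the edge-trace computation above: proving $\beta_E \cdot \bs{t}_E|_E \in \mathcal{P}_{k+1}^{\perp}(E)$ and the nontriviality of the induced endomorphism. This is exactly the algebraic compatibility that justifies enriching the test space from $\mathcal{P}_k(E)$ to $\mathcal{P}_{k+1}(E)$ only along the three edges of $\tilde{F}$; once it is in place, the rest of the argument is a clean decoupling that hands the problem back to a known element.
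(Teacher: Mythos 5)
Your proposal is correct and follows essentially the same route as the paper's proof: decompose $\tau=\tau_0+\sum_{E\subset\pd\tilde F}\beta_E$, observe that each bubble has vanishing tangential trace on every other edge, use the double integration by parts to show $\beta_E\cdot\bs{t}_E|_E\perp\mathcal{P}_k(E)$, and reduce to the classical unisolvency of the second-kind Nedelec element. The only cosmetic difference is how you kill $\beta_E$ on its own edge (a leading-coefficient/isomorphism argument on the one-dimensional space $\mathcal{P}_{k+1}^{\perp}(E)$, versus the paper's choice of test functions $\tilde q_0=\tau_0\cdot\bs{t}_{E_i}$, $\tilde q_1=-q_i$ and positivity of $b_{E_i}$), which is an equally valid variant.
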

\begin{proof}
Suppose that $\tau \in S(T, \tilde{F})$ and all DOFs of $\tau$ given by \eqref{eq:Sh-dof1}, \eqref{eq:Sh-dof2}, \eqref{eq:Sh-dof3}, \eqref{eq:Sh-dof4} vanish. Let $E_i$, $i=1,2,3$ be the edges of $\tilde{F}$. We can write $\tau$ as $\tau = \sum_{i=0}^3 \tau_i$ with $\tau_0 \in \mathcal{P}_{k}(T; \mathbb{R}^3)$ and 
\algn{
\tau_i =  \nabla\LRp{\frac{\pd q_i}{\pd \bs{t}_{E_i}} b_{E_i}} \in \mathcal{B}(E_i), \quad q_i \in \bar{\mathcal{P}}_{k+1}^{\perp}(E_i), i=1,2,3. 
}
%
By the vanishing DOFs assumption, for $E_i \subset \pd \tilde{F}$, 
\algns{
	\int_{E_i} \tau \cdot \bs{t}_{E_i} \tilde{q}\,dl = \int_{E_i} \tau_0 \cdot \bs{t}_{E_i} \tilde{q}\,dl + \sum_{j=1}^3 \int_{E_i} \tau_j \cdot \bs{t}_{E_i} \tilde{q}\,dl  = 0
}
for all $\tilde{q} \in \mathcal{P}_{k+1}(E_i)$. By the definition \eqref{eq:edge-bubble-def}, $\tau_i \cdot \bs{t}_{E_j}|_{E_j} = 0$ if $j \not = i$, so we get 
\algn{ \label{eq:edge-vanish-1}
	\int_{E_i} \tau_0 \cdot \bs{t}_{E_i} \tilde{q}\,dl + \int_{E_i} \tau_i \cdot \bs{t}_{E_i} \tilde{q}\,dl  = 0 .
}
Consider the decomposition $\tilde{q} = \tilde{q}_0 + \tilde{q}_1 \in \mathcal{P}_k(E_i) \oplus \mathcal{P}_{k+1}^{\perp}(E_i)$. Then,  
\algn{ \label{eq:edge-ortho1}
	\int_{E_i} \tau_0 \cdot \bs{t}_{E_i} \tilde{q}_1\,dl = 0 . 
}
%
Taking the integration by parts twice gives 
\algn{
	\notag 
	\int_{E_i} \tau_i \cdot \bs{t}_{E_i} \tilde{q}_0\,dl &= \int_{E_i} \frac{\pd}{\pd \bs{t}_{E_i}} \LRp{\frac{\pd q_i}{\pd \bs{t}_{E_i}} b_{E_i}} \tilde{q}_0 \,dl 	
	\\
	\label{eq:edge-ortho2}
	&= - \int_{E_i} \LRp{\frac{\pd q_i}{\pd \bs{t}_{E_i}} b_{E_i}} \frac{\pd \tilde{q}_0}{\pd \bs{t}_{E_i}} \,dl
	\\
	\notag
	&= \int_{E_i} q_i \frac{\pd}{\pd \bs{t}_{E_i}} \LRp{ b_{E_i} \frac{\pd \tilde{q}_0}{\pd \bs{t}_{E_i}} } \,dl 
	\\
	\notag
	&=0
}
where the last identity follows from $\tilde{q}_0 \in \mathcal{P}_k({E_i})$, $b_{E_i}|_{E_i} \in \mathcal{P}_2({E_i})$, and $q_i|_{E_i} \in \mathcal{P}_{k+1}^{\perp}({E_i})$. Therefore, \eqref{eq:edge-vanish-1} is reduced to 
\algns{
	\int_{E_i} \tau_0 \cdot \bs{t}_{E_i} \tilde{q}_0\,dl + \int_{E_i} \tau_i \cdot \bs{t}_{E_i} \tilde{q}_1\,dl  = 0 .
}
If $\tilde{q}_0 = \tau_0 \cdot \bs{t}_{E_i}$, $\tilde{q}_1 = - q_i$ in this formula, and use the identity 
\algns{
	\int_{E_i} \tau_i \cdot \bs{t}_{E_i} \tilde{q}_1\,dl &= - \int_{E_i} \frac{\pd}{\pd \bs{t}_{E_i}} \LRp{\frac{\pd q_i}{\pd \bs{t}_{E_i}} b_{E_i}} {q}_i \,dl = \int_{E_i} \LRp{\frac{\pd q_i}{\pd \bs{t}_{E_i}}}^2 b_{E_i} \,dl 	,
}
then 
\algns{
	\int_{E_i} \tau_0 \cdot \bs{t}_{E_i} \tilde{q}_0\,dl + \int_{E_i} \tau_i \cdot \bs{t}_{E_i} \tilde{q}_1\,dl  &= \int_{E_i} (\tau_0 \cdot \bs{t}_{E_i})^2 \,dl + \int_{E_i} \LRp{\frac{\pd q_i}{\pd \bs{t}_{E_i}}}^2 b_{E_i} \,dl 
	\\
	&= 0.
}
Since $b_{E_i} >0$ on $E_i$, $\frac{\pd q_i}{\pd \bs{t}_{E_i}}|_{E_i} = 0$, so $q_i|_{E_i}$ is constant. Furthermore, $q_i|_{E_i} \in \mathcal{P}_{k+1}^{\perp}(E_i)$, so $q_i = 0$. As a consequence, $\tau_i = 0$ for $i=1,2,3$. Then, $\tau =0$ follows by a standard unisolvency proof of the N\'ed\'elec edge elements of the 2nd kind.
\end{proof}

We now define $S_h$ for $n=3$ and $\bs{V}_h = \bs{V}_h^{RTN}$. The enriched $H(\curl)$ element $S_h$ with the shape functions 
\algn{ \label{eq:Sh-def3}
	S_h = \mathcal{P}_k(\mathcal{T}_h; \mathbb{R}^3) + \bigoplus_{E \in \mathcal{E}_h^{\Gamma} }\mathcal{B}(E)
}
and the global degrees of freedom \eqref{eq:Sh-dof1} for $E \in \mathcal{E}_h \setminus \mathcal{E}_h^{\Gamma}$, 
\eqref{eq:Sh-dof2} for $E \in \mathcal{E}_h^{\Gamma}$, \eqref{eq:Sh-dof3} for $F \in \mathcal{F}_h$, \eqref{eq:Sh-dof4} for $E \in \mathcal{T}_h$. 

For $S_h$ defined by \eqref{eq:Sh-def1}, \eqref{eq:Sh-def2}, \eqref{eq:Sh-def3} depending on $n$ and $\bs{V}_h$, we can check that 
\algn{ \label{eq:curl-Sh-inclusion}
	\curl S_h \subset \bs{V}_h .
}
%
%

Here we show existence of an appropriate interpolation.
\begin{lemma} \label{lemma:Ih-def}
	Suppose that $\Psi \in H^1(\Omega)$ and $\curl \Psi \cdot \bs{n}|_{\Gamma} \in L^2(\Gamma)$. 
	Let $S_h$ be defined by \eqref{eq:Sh-def1}, \eqref{eq:Sh-def2}, \eqref{eq:Sh-def3} depending on $n$ and $\bs{V}_h$, and recall $l$ defined in \eqref{eq:l-def}.
	%
	Then, there exists $I_h \Psi \in S_h$ such that 
	\algn{
		\label{eq:Ih-property0}
		\tnorm{ \curl I_h \Psi} &\le C (\| \Psi \|_{1} + \| \alpha^{1/2} \curl \Psi \cdot \bs{n} \|_{0,\Gamma}), 
		\\
		\label{eq:Ih-property1}
		\| \Psi - I_h \Psi \|_{0} &\le C (\| \Psi \|_{1} + \| \alpha \curl \Psi \cdot \bs{n} \|_{0,\Gamma}), 
		\\
		\label{eq:Ih-property2}
		\curl I_h \Psi \cdot \bs{n}|_E &= P_E^{m} (\curl \Psi \cdot \bs{n}|_E) , \quad \forall E \in \mathcal{E}_h^{\Gamma}, \text{ if } n=2,
		\\
		\label{eq:Ih-property3}
		\curl I_h \Psi \cdot \bs{n}|_F &= P_F^{m} (\curl \Psi \cdot \bs{n}|_F) , \quad \forall F \in \mathcal{F}_h^{\Gamma}, 	\text{ if } n=3,
	}
	and for $E \in \mathcal{E}_h^{\Gamma}$, 
	\algn{ \label{eq:Ih-property4}
		\begin{cases}
			(I_h \Psi - \Psi)(v) &= 0 \quad \text{ if } n=2, v \in \pd E ,
			\\
			\int_E (I_h \Psi - \Psi) \cdot \bs{t}_E \tilde{q} \,dl &= 0 \quad \text{ if } n=3, \tilde{q} \in \mathcal{P}_{k+1}(E) .
		\end{cases}
	}
\end{lemma}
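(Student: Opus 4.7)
The plan is to construct $I_h\Psi$ as a modified Scott--Zhang operator, assigning values to each of the local DOFs \eqref{eq:Sh-dof1}--\eqref{eq:Sh-dof4} (and their 2D analogues) by integration against suitably chosen dual basis functions. For each DOF, we pick a $d$-dimensional averaging simplex in its support and define the DOF value by integrating $\Psi$ against the dual basis on that simplex. Away from $\Gamma$ we use the standard interior choice of averaging simplex; for DOFs associated with edges $E \in \mathcal{E}_h^{\Gamma}$ or faces $F \in \mathcal{F}_h^{\Gamma}$, the averaging simplex is chosen to lie on $\Gamma$ itself, so that on the interface the interpolant captures a polynomial projection of the trace of $\Psi$. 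The extra regularity $\curl\Psi\cdot\bs{n}|_{\Gamma}\in L^2(\Gamma)$ ensures $\Psi|_{\Gamma}\in H^1(\Gamma)$ (up to the relevant tangential notion), which legitimizes point evaluation at vertices in 2D and edge traces in 3D.

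Property \eqref{eq:Ih-property4} then holds by construction from the interface edge DOFs \eqref{eq:Sh-dof2}. For \eqref{eq:Ih-property2}--\eqref{eq:Ih-property3}, I would apply Stokes' theorem on an interface face $F$ (or interface edge $E$ in 2D) to express
\[
\int_F (\curl I_h\Psi-\curl\Psi)\cdot\bs{n}_F \, q\,ds
 = \int_{\partial F}(I_h\Psi-\Psi)\cdot\bs{t}\,q\,dl - \int_F (I_h\Psi-\Psi)\cdot\curl_F q\,ds
\]
for any $q\in\mathcal{P}_m(F)$. With $m$ as in \eqref{eq:l-def}, $q|_{E}\in\mathcal{P}_{k+1}(E)$, so the first term on the right vanishes by \eqref{eq:Ih-property4}; the second vanishes because the interface face DOFs \eqref{eq:Sh-dof3} are defined to be the $L^2$ projections of $\Psi$ against the face dual basis, whose span covers $\curl_F \mathcal{P}_m(F)$. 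This verifies \eqref{eq:Ih-property3}, and the 2D analogue proceeds along the same lines with edges playing the role of faces. The stability \eqref{eq:Ih-property0} follows since $\|\alpha^{1/2}\curl I_h\Psi\cdot\bs{n}\|_{0,\Gamma}\le \|\alpha^{1/2}\curl\Psi\cdot\bs{n}\|_{0,\Gamma}$ by the $L^2$ projection, plus the standard bulk $L^2$ estimate on $\curl I_h\Psi$; the approximation bound \eqref{eq:Ih-property1} is a routine scaling/trace-inequality argument, element by element, as in classical Scott--Zhang.

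The main obstacle lies in the compatibility between the two interface requirements: on an interface edge $E \subset \partial F$, the tangential trace of $I_h\Psi$ on $E$ is used both to enforce full matching up to $\mathcal{P}_{k+1}(E)$ in \eqref{eq:Ih-property4} and to control $\curl I_h\Psi\cdot\bs{n}_F$ on the adjacent faces through the Stokes identity above. Without enrichment, the standard tangential DOFs only carry moments up to $\mathcal{P}_k(E)$, and forcing $\mathcal{P}_{k+1}$ matching would alter $\curl I_h\Psi\cdot\bs{n}_F$ in an uncontrollable way. The edge-bubble space $\mathcal{B}(E)$ defined in \eqref{eq:edge-bubble-def} resolves this precisely because its elements are gradients and hence curl-free: the extra DOFs \eqref{eq:Sh-dof2} fix the $\mathcal{P}_{k+1}^{\perp}(E)$ moments of the tangential trace without perturbing $\curl I_h\Psi$. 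Verifying this decoupling rigorously is the central point where the specific choice of $S_h$ enters the argument, and everything else in the proof reduces to bookkeeping with standard Scott--Zhang estimates and polynomial-projection stability.
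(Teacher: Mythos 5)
Your construction is essentially the same as the paper's: you build an interface-adapted Scott--Zhang interpolant by choosing the averaging simplices for interface DOFs to lie on $\Gamma$, which is exactly what the paper encodes via the decomposition $I_h\Psi = I_h^{SZ}(\Psi - I_h^\Gamma\Psi) + I_h^\Gamma\Psi$, where $I_h^\Gamma$ fixes the $\Gamma$-DOFs (\eqref{eq:Ih-Gamma-2d-1}--\eqref{eq:Ih-Gamma-2d-2} and \eqref{eq:Ih-Gamma-3d-1}--\eqref{eq:Ih-Gamma-3d-2}) and $I_h^{SZ}$ is a Scott--Zhang operator with $\Gamma$ as a vanishing interface (in 3D the one of Gawlik--Holst--Licht). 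Your Stokes-theorem verification of \eqref{eq:Ih-property2}--\eqref{eq:Ih-property3} matches the paper's use of \eqref{eq:IBP-2d-2} and \eqref{eq:IBP-3d-1}, and your observation that $\mathcal{B}(E)$ consists of gradients --- so the extra $\mathcal{P}_{k+1}^{\perp}(E)$ tangential moments in \eqref{eq:Ih-property4} can be matched without perturbing the curl --- is a clean articulation of why the enrichment \eqref{eq:Sh-def3} is the right one, something the paper leaves implicit. The only nuance worth flagging: the need for $\mathcal{P}_{k+1}(E)$ tangential matching in \eqref{eq:Ih-property4} is not driven by the Stokes argument for \eqref{eq:Ih-property3} (which only requires matching $q|_E$ for $q\in\mathcal{P}_m(F)$, i.e.\ degree $\le k$), but by the edge terms that appear later in \eqref{eq:face-wise-IBP}, where $\jump{p_h^*}|_E \in \mathcal{P}_{k+1}(E)$; your ``compatibility'' discussion conflates these two slightly, though your conclusion about why curl-free enrichment is what saves the day is still correct.
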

\begin{proof}
	Suppose that $n=2$. In this case, $\curl \Psi \cdot \bs{n}_{\Gamma} \in L^2(\Gamma)$ means that the tangential derivative of $\Psi$ along $\Gamma$ is in $L^2(\Gamma)$ (cf. \eqref{eq:IBP-2d-2} in Appendix). Since $\Psi|_{\Gamma} \in L^2(\Gamma)$, we have $\Psi|_{\Gamma} \in H^1(\Gamma)$. Let $\mathcal{N}_h$ be the set of vertex nodes in $\overline{\Omega}$ which determines the degrees of freedom of $S_h$, a Lagrange finite element. By the Sobolev embedding on the 1-dimensional submanifold $\Gamma$, vertex evaluation of $\Psi$ on the nodes on $\Gamma$ is well-defined. On $E \in \mathcal{E}_h^{\Gamma}$, we define $I_h^{\Gamma} \Psi|_E$ by 
	\algn{
		\label{eq:Ih-Gamma-2d-1}
		I_h^{\Gamma} \Psi(v) &= \Psi(v) & & v \in \pd E, 
		\\
		\label{eq:Ih-Gamma-2d-2}
		\int_E I_h^{\Gamma} \Psi  \frac{\pd}{\pd \bs{t}_E} \tilde{q} \, dl &= \int_E \Psi \frac{\pd}{\pd \bs{t}_E} \tilde{q} \,dl & & \forall \tilde{q} \in \mathcal{P}_{m}(E), \tilde{q} \perp \mathcal{P}_0(E) 
	}
	for $m$ defined in \eqref{eq:l-def}, and  
	%
	%
	$I_h^{\Gamma} \Psi(v) = 0$ for $v \in \mathcal{N}_h \setminus \Gamma$. Then, by a standard scaling argument, 
	\algns{
		\| I_h^{\Gamma} \Psi \|_0 + \| \curl I_h^{\Gamma} \Psi \|_0 \le C (\| \Psi \|_1 + \| \curl \Psi \cdot \bs{n} \|_{0, \Gamma})	
	}
	with $C>0$ independent of mesh sizes. 
	Let $I_h^{SZ}$ be a Scott--Zhang interpolation (cf. \cite{Scott-Zhang:1990}) which takes $\Gamma$ as a vanishing interface and satisfies
	\algns{
		\| I_h^{SZ} \Phi \|_0 + \| \curl I_h^{SZ} \Phi \|_0 \le C \| \Phi \|_1 
	}
	for $\Psi \in H^1(\Omega)$. 	If we define $I_h \Psi$ by 
	\algn{ \label{eq:Ih-def}
		I_h \Psi = I_h^{SZ} ( \Psi - I_h^{\Gamma} \Psi) + I_h^{\Gamma} \Psi , 
	}
	\tred{then} $I_h \Psi$ is bounded by $\| \Psi \|_1 + \| \curl \Psi \cdot \bs{n} \|_{0,\Gamma}$. 

	We now check \eqref{eq:Ih-property0}, \eqref{eq:Ih-property1}, \eqref{eq:Ih-property2}, \eqref{eq:Ih-property4}. First, \eqref{eq:Ih-property4} is a consequence of \eqref{eq:Ih-Gamma-2d-1} and \eqref{eq:Ih-def}. By \eqref{eq:Ih-Gamma-2d-1}, \eqref{eq:Ih-Gamma-2d-2}, $\curl I_h \Psi \cdot \bs{n}|_{\Gamma} = I_h^{\Gamma} \Psi \cdot \bs{n}|_{\Gamma}$, and the integration by parts on every $E \in \mathcal{E}_h^{\Gamma}$, \eqref{eq:Ih-property2} follows.

Furthermore, if $\Psi \in S_h$, then $I_h^{\Gamma} \Psi = \Psi|_{\Gamma}$, so $I_h$ is the identity map on $S_h$ because $I_h^{SZ}$ is the identity map for the elements in $S_h$ which vanish on $\Gamma$. By the Bramble--Hilbert lemma, $\| \Psi - I_h \Psi \|_0 \le Ch (\| \Psi \|_1 + \| \curl \Psi \cdot \bs{n} \|_{0,\Gamma} )$. 

	
	Suppose that $n=3$. 
	First, $\Psi \times \bs{n}_{\Gamma} \in L^2(\Gamma; \mathbb{R}^2)$ and $\curl \Psi \cdot \bs{n}_{\Gamma} \in L^2(\Gamma)$ imply that the tangential component of $\Psi$ on $\Gamma$ is in the rotated $H(\div)$ space on $\Gamma$ (cf. \eqref{eq:3d-curl-identity} in Appendix). Since $\Psi|_{\Gamma} \in H^{s}(\Gamma)$ with $s>0$ as a trace of $H^1(\Omega; \mathbb{R}^3)$, $\Psi \times \bs{n}_{\Gamma} \in L^r(\Gamma)$ for $r>2$ by Sobolev embedding, so 	%
	\algn{
		\label{eq:Ih-Gamma-3d-1}
		\int_E I_h^{\Gamma} \Psi \cdot \bs{t}_E \tilde{q} \,dl &= \int_E \Psi \cdot \bs{t}_E \tilde{q} \,dl, & & E \in \mathcal{E}_h^{\Gamma}, \tilde{q} \in \mathcal{P}_{k+1}(E),
		\\
		\label{eq:Ih-Gamma-3d-2}
		\int_F I_h^{\Gamma}\Psi \times \bs{n}_F \cdot \xi \, ds &= \int_F (\Psi \times \bs{n}_F) \cdot \xi \,ds, & & F \in \mathcal{F}_h^{\Gamma}, \xi \in \mathcal{P}_{m-2}(F; \mathbb{R}^2)
	}
	are well-defined (cf. \cite{Brezzi-Fortin-book}). $I_h \Psi \in S_h$ is defined by taking zeros for all other degrees of freedom, and $\| I_h^{\Gamma} \Psi\|_0$ is bounded by $\| \Psi \|_1 + \| \curl \Psi \cdot \bs{n} \|_{0,\Gamma}$. There exists a Scott--Zhang type interpolation $I_h^{SZ}$ for $H(\curl)$ elements (see \cite{Gawlik-Holst-Licht:2021}) with vanishing interface $\Gamma$, so define $I_h$ as in \eqref{eq:Ih-def}. By an argument similar to the proof for $n=2$, \eqref{eq:Ih-property1} can be obtained, and \eqref{eq:Ih-property4} follows from \eqref{eq:Ih-Gamma-3d-1} and \eqref{eq:Ih-def}. Finally, \eqref{eq:Ih-property3} follows from \eqref{eq:Ih-Gamma-3d-1}, \eqref{eq:Ih-Gamma-3d-2}, and the integration by parts on every face $F \in \mathcal{F}_h^{\Gamma}$ (see \eqref{eq:IBP-3d-1} in Appendix for details).
	%
\end{proof}
We now prove a reliability estimate of $\tnorm{\bs{u}_h - \tilde{\bs{u}} }$.
\begin{thm} Suppose that $(\tilde{\ub}, \tilde{p})$ is defined as in Lemma~\ref{lemma:orthogonal-decomposition}. Then, there exists $C>0$ independent of mesh sizes such that 
	\algn{ \label{eq:reliability2}
		\tnorm{\bs{u}_h - \tilde{\bs{u}} } \le C \eta .
	}
\end{thm}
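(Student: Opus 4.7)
The plan is to use an interface-adapted Helmholtz-type decomposition of the divergence-free residual $\bs{u}_h - \tilde{\bs{u}}$, combined with the interpolation $I_h$ from Lemma~\ref{lemma:Ih-def}, to convert $\tnorm{\bs{u}_h - \tilde{\bs{u}}}^2$ into boundary integrals matching exactly the pieces of $\eta$.

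First I would observe that $\bs{e} := \bs{u}_h - \tilde{\bs{u}}$ is divergence-free in $L^2(\Omega)$, since \eqref{eq:discrete-eq2} and \eqref{eq:variational-project-eq2} both force $\div \bs{u}_h = P_h f = \div \tilde{\bs{u}}$. Using that $\Omega$ is homologically trivial, I would construct a potential $\Psi \in H^1(\Omega)$ (a scalar if $n=2$, a vector field if $n=3$) with $\curl \Psi = \bs{e}$ and the weighted bound
\begin{align*}
\|\Psi\|_1 + \|\alpha^{1/2} \curl \Psi \cdot \bs{n}\|_{0,\Gamma} \le C\,\tnorm{\bs{e}}.
\end{align*}
The interior $H^1$ piece follows from a standard elliptic estimate (Laplace-type problem with $\bs{e}$ as data), while the $\Gamma$-trace piece is automatic, because $\curl \Psi \cdot \bs{n}|_\Gamma = \bs{e} \cdot \bs{n}|_\Gamma$ enters the definition of $\tnorm{\cdot}$ precisely with weight $\alpha^{1/2}$.

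Next, I would test $\bs{v} = \curl \Psi \in \bs{V}$ in \eqref{eq:variational-project-eq1}; the pressure contribution drops out because $\div \curl \Psi = 0$, yielding $(\tilde{\bs{u}}, \curl \Psi)_\Omega + \LRa{\alpha \tilde{\bs{u}}\cdot\bs{n}, \curl\Psi\cdot\bs{n}}_\Gamma = 0$. Inserting this into $\tnorm{\bs{e}}^2 = (\bs{e}, \curl \Psi)_\Omega + \LRa{\alpha \bs{e}\cdot\bs{n}, \curl\Psi\cdot\bs{n}}_\Gamma$ leaves $\tnorm{\bs{e}}^2 = (\bs{u}_h, \curl\Psi)_\Omega + \LRa{\alpha \bs{u}_h \cdot \bs{n}, \curl\Psi\cdot\bs{n}}_\Gamma$. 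Since $\curl I_h \Psi \in \bs{V}_h$ by \eqref{eq:curl-Sh-inclusion}, I would then subtract \eqref{eq:discrete-eq1} with $\bs{v} = \curl I_h \Psi$ (which also annihilates $p_h$) to replace $\Psi$ by $\Psi - I_h \Psi$ throughout. To extract $\eta_T$ I would add and subtract $\nabla p_h^*$ in the volume integral and integrate the $\nabla p_h^*$ piece by parts element-by-element, using $\div \curl = 0$. This produces three families of terms: the element residual $(\bs{u}_h + \nabla p_h^*, \curl(\Psi - I_h\Psi))_T$; interior jumps $\LRa{\jump{p_h^*}, \curl(\Psi - I_h\Psi)\cdot \bs{n}_F}_F$ on $F \in \mathcal{F}_h^0$; and, after combining with the $\Gamma$-surface term, a fault contribution $\LRa{\alpha \bs{u}_h\cdot\bs{n} - \jump{p_h^*}, \curl(\Psi - I_h\Psi) \cdot \bs{n}}_F$ on each $F \in \mathcal{F}_h^\Gamma$.

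On the fault, the critical algebraic step is that \eqref{eq:Ih-property2}--\eqref{eq:Ih-property3} give $\curl(\Psi - I_h\Psi)\cdot\bs{n}|_F = (I - P_F^m)(\curl\Psi\cdot\bs{n}|_F)$, while $\bs{u}_h\cdot\bs{n}|_F \in \mathcal{P}_m(F)$ by the choice of $m$ in \eqref{eq:l-def}, so $\alpha \bs{u}_h\cdot\bs{n}$ is annihilated by the projection and only $-\LRa{(I-P_F^m)\jump{p_h^*}, \curl\Psi\cdot\bs{n}}_F$ survives. Then a termwise Cauchy--Schwarz argument together with a standard trace inequality on interior faces and the interpolation bounds \eqref{eq:Ih-property0}--\eqref{eq:Ih-property1} gives
\begin{align*}
\tnorm{\bs{e}}^2 \le C\,\eta \,\LRp{\|\Psi\|_1 + \|\alpha^{1/2}\curl\Psi\cdot\bs{n}\|_{0,\Gamma}} \le C'\,\eta\,\tnorm{\bs{e}},
\end{align*}
and dividing by $\tnorm{\bs{e}}$ yields \eqref{eq:reliability2}.

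The hard part will be the Helmholtz decomposition in the first step: constructing $\Psi$ with an $\alpha$-robust weighted control of $\curl \Psi \cdot \bs{n}$ on $\Gamma$ (rather than merely $\alpha$-dependent) is essential for the estimator to be $\alpha$-independent. The second subtle point is the projection cancellation on $\mathcal{F}_h^\Gamma$: it works only because $S_h$ was engineered so that $\curl I_h \Psi \cdot \bs{n}$ on each fault face matches the $P_F^m$-projection of $\curl \Psi \cdot \bs{n}$, which is the very reason the edge-bubble enrichment $\mathcal{B}(E)$ was introduced for the 3D RTN case. Boundary terms on $\partial \Omega = \Gamma_D$ that appear from the element-wise integration by parts I would control by selecting the Scott--Zhang component of $I_h$ so that $\Psi|_{\partial \Omega}$ is preserved in a compatible averaged sense.
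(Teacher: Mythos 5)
Your proposal follows essentially the same route as the paper: the regular potential $\Psi$ with $\curl\Psi=\bs{u}_h-\tilde{\bs{u}}$, Galerkin orthogonality via $\curl I_h\Psi\in\bs{V}_h$, the cancellation of the fault surface term through the projection property \eqref{eq:Ih-property2}--\eqref{eq:Ih-property3} together with $\alpha\bs{u}_h\cdot\bs{n}|_F\in\mathcal{P}_m(F)$, and element-wise integration by parts of the $\nabla p_h^*$ term to produce the jump contributions. The only difference is bookkeeping (you eliminate $\tilde{\bs{u}}$ by testing \eqref{eq:variational-project-eq1} with $\curl\Psi$ directly, whereas the paper uses the strong-form relations $\tilde{\bs{u}}=-\nabla\tilde{p}$, $\alpha\tilde{\bs{u}}\cdot\bs{n}=\jump{\tilde{p}}$), which leads to the same identity \eqref{eq:uh-tildeu-diff3}.
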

\begin{proof}
	We now estimate $\tnorm{\bs{u}_h - \tilde{\bs{u}}}^2$. First, since $\Omega$ is homologically trivial, there exists $\Psi \in H^1(\Omega, \mathbb{R}^n)$ such that $\bs{u}_h - \tilde{\bs{u}} = \curl \Psi$ and 
\algns{
	\| \Psi \|_{1} &\le C \| \bs{u}_h - \tilde{\bs{u}} \|_{0},
	\\
	\LRa{ \alpha \curl \Psi \cdot \bs{n}, \curl \Psi \cdot \bs{n}}_{\Gamma} &= \LRa{ \alpha (\bs{u}_h - \tilde{\bs{u}}) \cdot \bs{n} , (\bs{u}_h - \tilde{\bs{u}}) \cdot \bs{n} }_{\Gamma} 
}
because $\div (\bs{u}_h - \tilde{\bs{u}}) = 0$. Thus, 
\algn{ \label{eq:uh-tildeu-diff}
	\tnorm{\bs{u}_h - \tilde{\bs{u}}}^2 = \LRp{\bs{u}_h - \tilde{\bs{u}}, \curl \Psi}_{\Omega} + \LRa{\alpha (\bs{u}_h - \tilde{\bs{u}}) \cdot\bs{n},\curl \Psi \cdot\bs{n}}_{\Gamma} .
}
Since 
\begin{subequations}
	\label{eq:variational-project2-eqs}
	\algn{ 
    	\label{eq:variational-project2-eq1}
		\LRp{\bs{u}_h - \tilde{\bs{u}}, \bs{v}}_{\Omega}- \LRp{p_h - \tilde{p}, \div \bs{v}}_{\Omega} + \LRa{\alpha (\bs{u}_h - \tilde{\bs{u}}) \cdot\bs{n},\bs{v}\cdot\bs{n}}_{\Gamma} &= 0 
	}
\end{subequations}
for all $\bs{v} \in \bs{V}_h$, we have 
\algn{ \label{eq:Ih-orthogonality}
	\LRp{\bs{u}_h - \tilde{\bs{u}}, \curl I_h \Psi }_{\Omega} + \LRa{\alpha (\bs{u}_h - \tilde{\bs{u}}) \cdot\bs{n},\curl I_h \Psi \cdot\bs{n}}_{\Gamma} = 0 
}
for $I_h$ in Lemma~\ref{lemma:Ih-def} because of $I_h \Psi \in S_h$ and \eqref{eq:curl-Sh-inclusion}. Applying \eqref{eq:Ih-orthogonality} to \eqref{eq:uh-tildeu-diff}, 
\algn{ \label{eq:uh-tildeu-diff2}
	\tnorm{\bs{u}_h - \tilde{\bs{u}}}^2 &= \LRp{\bs{u}_h - \tilde{\bs{u}}, \curl (\Psi - I_h \Psi)}_{\Omega} 
	\\
	\notag &\quad + \LRa{\alpha (\bs{u}_h - \tilde{\bs{u}}) \cdot\bs{n},\curl (\Psi - I_h \Psi) \cdot\bs{n}}_{\Gamma} .
}
Since $\tilde{\bs{u}} = -\nabla \tilde{p}$ and $\alpha \tilde{\bs{u}} \cdot \bs{n} = \jump{\tilde{p}}$ on $\Gamma$, we can further obtain
\algn{ \label{eq:uh-tildeu-diff3}
	\tnorm{\bs{u}_h - \tilde{\bs{u}}}^2 &= \LRp{\bs{u}_h , \curl (\Psi - I_h \Psi)}_{\Omega} 
	\\
	\notag &\quad + \LRa{\alpha \bs{u}_h \cdot\bs{n},\curl (\Psi - I_h \Psi) \cdot\bs{n}}_{\Gamma} 
}
by the integration by parts. For $m$ defined in \eqref{eq:l-def}, $(\alpha \bs{u}_h \cdot \bs{n})|_E \in \mathcal{P}_m(E)$ for $E \in \mathcal{E}_h^{\Gamma}$ if $n=2$ and $(\alpha \bs{u}_h \cdot \bs{n})|_F \in \mathcal{P}_m(F)$ for $F \in \mathcal{F}_h^{\Gamma}$ if $n=3$. Therefore, \eqref{eq:uh-tildeu-diff3} is reduced to 
\algn{ \label{eq:uh-tildeu-diff3-reduced}
	\tnorm{\bs{u}_h - \tilde{\bs{u}}}^2 &= \LRp{\bs{u}_h , \curl (\Psi - I_h \Psi)}_{\Omega} 
}
by \eqref{eq:Ih-property2} and \eqref{eq:Ih-property3}. 

If $n=2$, a simple algebra and triangle-wise integration by parts give 
\algn{ 
	\notag \tnorm{\bs{u}_h - \tilde{\bs{u}}}^2 &= \LRp{\bs{u}_h + \nabla p_h^* , \curl (\Psi - I_h \Psi)}_{\Omega} - \LRp{ \nabla p_h^* , \curl (\Psi - I_h \Psi)}_{\Omega} 
	\\
	\label{eq:uh-tildeu-diff4-2d}	&=  \LRp{\bs{u}_h + \nabla p_h^* , \curl (\Psi - I_h \Psi)}_{\Omega} - \sum_{T \in \mathcal{T}_h} \LRa{ \nabla p_h^* \cdot \bs{t}_{\pd T} , \Psi - I_h \Psi}_{\pd T } .
}
By edge-wise integration by parts using $(\Psi - I_h\Psi)(v) = 0$ for every endpoint $v$ of edges $E \in \mathcal{E}_h^{\Gamma}$, 
\algn{ 
	\notag 
	&\sum_{T \in \mathcal{T}_h} \LRa{ \nabla p_h^* \cdot \bs{t}_{\pd T}, \Psi - I_h \Psi }_{\pd T}
	\\
	\notag 
	&\quad = \pm \sum_{E \in \mathcal{E}_h} \LRa{ \nabla \jump{p_h^*} \cdot \bs{t}_{E}, \Psi - I_h \Psi }_{E}
	\\
	\label{eq:uh-tildeu-diff5-2d}	
	&\quad = \pm \sum_{E \in \mathcal{E}_h^{\Gamma}} \LRa{\jump{p_h^*} , \curl(\Psi - I_h \Psi) \cdot \bs{n} }_{E} \pm \sum_{E \in \mathcal{E}_h \setminus \mathcal{E}_h^{\Gamma}} \LRa{ \nabla \jump{p_h^*} \cdot \bs{t}_{E}, \Psi - I_h \Psi }_{E} 
	\\
	\notag
	&\quad = \pm \sum_{E \in \mathcal{E}_h^{\Gamma}} \LRa{ (I - P_E^m) \jump{p_h^*}, \curl (\Psi - I_h \Psi) \cdot \bs{n}}_{E} 
	\\
	\notag 
	&\qquad \pm \sum_{E \in \mathcal{E}_h \setminus \mathcal{E}_h^{\Gamma}} \LRa{ \nabla \jump{p_h^*} \cdot \bs{t}_{E}, \Psi - I_h \Psi }_{E}  .
}
Here we use $\pm$ due to sign ambiguity of the definitions of $\jump{p_h^*}$ and $\bs{t}_E$. However, we will use the Cauchy--Schwarz inequality to estimate the terms that this ambiguous sign is involved, so the exact sign is not important in the rest of discussions. By this and \eqref{eq:uh-tildeu-diff4-2d}, 
\algn{ 
	\notag \tnorm{\bs{u}_h - \tilde{\bs{u}}}^2 &=  \LRp{\bs{u}_h + \nabla p_h^* , \curl (\Psi - I_h \Psi)}_{\Omega} \pm \sum_{E \in \mathcal{E}_h \setminus \mathcal{E}_h^{\Gamma} } \LRa{ \nabla \jump{p_h^*} \cdot \bs{t}_E, \Psi - I_h \Psi}_{E } 
	\\
	\label{eq:uh-tildeu-diff6-2d}	
	&\quad \pm \sum_{E \in \mathcal{E}_h^{\Gamma} } \LRa{ (I-P_E^m) \jump{p_h^*} , \curl(\Psi - I_h \Psi) \cdot \bs{n} }_{E} 
	\\
	\notag &=: I_{2,a} + I_{2,b} + I_{2,c} .
}
By the Cauchy--Schwarz inequality 
\algn{
	\notag |I_{2,a}| &\le \| \bs{u}_h + \nabla p_h^* \|_0 \| \curl( \Psi - I_h \Psi) \|_0
	\\
	\label{eq:I2a-estm} &\le C\LRp{\sum_{T \in \mathcal{T}_h} \eta_T^2}^{\frac 12} \LRp{ \| \Psi \|_1 + \| \curl \Psi \cdot \bs{n} \|_{L^2(\Gamma)} }
	\\
	\notag &\le C\LRp{\sum_{T \in \mathcal{T}_h} \eta_T^2}^{\frac 12} \tnorm{\bs{u}_h - \tilde{\bs{u}} }.
}
By element-wise inverse inequality and an approximation property of $\Psi - I_h \Psi$, 
\algn{ 
	\notag |I_{2,b}| &\le \sum_{E \in \mathcal{E}_h^0 \setminus \mathcal{E}_h^{\Gamma} } \left| \LRa{ \nabla \jump{p_h^*} \cdot \bs{t}_E , \Psi - I_h \Psi}_{E}  \right|
	\\
	\label{eq:I2b-estm}	
	&\le C \LRp{ \sum_{E \in \mathcal{E}_h \setminus \mathcal{E}_h^{\Gamma}} h_E^{-1} \| \jump{p_h^*} \|_{0,E}^2}^{\frac 12}  \| \Psi \| _{1}  	
	\\
	\notag &\le C \LRp{ \sum_{E \in \mathcal{E}_h \setminus \mathcal{E}_h^{\Gamma}} \eta_E^2}^{\frac 12} \tnorm{\bs{u}_h - \tilde{\bs{u}} }.
}
For $I_{2,c}$, 
%
%
%
\algn{
	\notag 
	&\left| \LRa{(I-\tred{P_E^m}) \jump{p_h^*}, \curl(\Psi - I_h \Psi)\cdot \bs{n} }_E \right|
	\\
	\label{eq:I2c-estm}
	&\quad \le \alpha^{-1/2} \| (I-P_E^m) \jump{p_h^*} \|_{0,E} \| \alpha^{1/2} \curl (\Psi - I_h \Psi) \cdot \bs{n} \|_{0,E} 
	\\
	\notag
	&\quad \le 2 \alpha^{-1/2} \| (I-P_E^m) \jump{p_h^*} \|_{0,E} \| \alpha^{1/2} \curl \Psi \cdot \bs{n} \|_{0,E} .
}
%
Combining \eqref{eq:uh-tildeu-diff6-2d}, \eqref{eq:I2a-estm}, \eqref{eq:I2b-estm}, \eqref{eq:I2c-estm}, we obtain %
\algns{ 
	\tnorm{\bs{u}_h - \tilde{\bs{u}} } \le C \eta . 
}

If $n=3$, then 
\algn{ 
	\notag \tnorm{\bs{u}_h - \tilde{\bs{u}}}^2 &= \LRp{\bs{u}_h + \nabla p_h^* , \curl (\Psi - I_h \Psi)}_{\Omega} - \LRp{ \nabla p_h^* , \curl (\Psi - I_h \Psi)}_{\Omega} 
	\\
	\label{eq:uh-tildeu-diff4-3d}	&=  \LRp{\bs{u}_h + \nabla p_h^* , \curl (\Psi - I_h \Psi)}_{\Omega} - \sum_{T \in \mathcal{T}_h} \LRa{ \nabla p_h^* , \bs{n} \times (\Psi - I_h \Psi)}_{\pd T } 
}
by tetrahedron-wise integration by parts. 
By face-wise integration by parts \eqref{eq:IBP-3d-1} and by the property \eqref{eq:Ih-property4}, 
\algn{
	\notag
	\sum_{T \in \mathcal{T}_h} \LRa{ \nabla p_h^* , \bs{n} \times (\Psi - I_h \Psi)}_{\pd T \cap \Gamma} &= \pm \sum_{F \in \mathcal{F}_h^{\Gamma}} \LRa{ \nabla \jump{p_h^*} , \bs{n} \times (\Psi - I_h \Psi)}_{F} 
	\\
	\label{eq:face-wise-IBP}
	&=\pm \sum_{F \in \mathcal{F}_h^{\Gamma}} \LRa{ \jump{p_h^*} , \curl (\Psi - I_h \Psi) \cdot \bs{n}}_{F} 
	\\
	\notag
	&= \pm \sum_{F \in \mathcal{F}_h^{\Gamma}} \LRa{ (I-P_F^m) \jump{p_h^*} , \curl (\Psi - I_h \Psi) \cdot \bs{n}}_{F} .	
}
We can proceed using this and \eqref{eq:uh-tildeu-diff4-3d} to obtain
\algn{ 
	\notag 
	\tnorm{\bs{u}_h - \tilde{\bs{u}}}^2 &=  \LRp{\bs{u}_h + \nabla p_h^* , \curl (\Psi - I_h \Psi)}_{\Omega} \pm \sum_{F \in \mathcal{F}_h \setminus \mathcal{F}_h^{\Gamma} } \LRa{ \nabla \jump{p_h^*} \times \bs{n}, (\Psi - I_h \Psi)}_{F} 
	\\
	\label{eq:uh-tildeu-diff5-3d}	
	&\quad \pm \sum_{F \in \mathcal{F}_h^{\Gamma} } \LRa{ (I-P_F^m) \jump{p_h^*} ,\curl (\Psi - I_h \Psi) \cdot\bs{n}}_{F} 
	\\
	\notag
	&=: I_{3,a} + I_{3,b} + I_{3,c} .
}
By the arguments which are completely similar to the ones for $n=2$, we can obtain 
\algn{
	\label{eq:I3a-estm}
	|I_{3,a}| &\le C\LRp{\sum_{T \in \mathcal{T}_h} \|  \bs{u}_h + \nabla p_h^* \|_{0,T}^2 }^{\frac 12} \| \bs{u}_h - \tilde{\bs{u}} \|_0,
	\\
	\label{eq:I3b-estm}	
	|I_{3,b}| &\le C \LRp{ \sum_{F \in \mathcal{F}_h \setminus \mathcal{F}_h^{\Gamma}  } h_F^{-1} \| \jump{p_h^*} \|_{0,F}^2 }^{\frac 12} \| \bs{u}_h - \tilde{\bs{u}} \|_0,
	\\
	\label{eq:I3c-estm}
	|I_{3,c}| &\le C \LRp{ \sum_{F \in \mathcal{F}_h^{\Gamma}} \alpha^{-1} \| (I-P_F^m) \jump{p_h^*} \|_{0,F}^2 }^{\frac 12} 
	\| \alpha^{1/2} (\bs{u}_h - \tilde{\bs{u}}) \cdot \bs{n} \|_{0,\Gamma}.
}
Applying these estimates to \eqref{eq:uh-tildeu-diff5-3d}, we can obtain \eqref{eq:reliability2}.
\end{proof}
\begin{rmk}
	The new $S_h$ space in \eqref{eq:Sh-def3} for $n=3$, $\bs{V}_h = \bs{V}_h^{RTN}$, is necessary for \eqref{eq:face-wise-IBP}.
	More precisely, the new $S_h$ allows an interpolation $I_h$ satisfying \eqref{eq:Ih-property4} for $n=3$, $\bs{V}_h = \bs{V}_h^{RTN}$, which is necessary for the first equality in \eqref{eq:face-wise-IBP}.
\end{rmk}

\subsection{Local Efficiency}

In this subsection we show local efficiency of the a posteriori error estimator.
We give a detailed proof for $n=3$ because the two-dimensional case is almost same.

We prove a lemma employing the techniques in \cite{Vohralik:2007,Kim:2012}. 
\begin{lemma} \label{lemma:jump-mean-value-zero}
	For $F \in \mathcal{F}_h \setminus \mathcal{F}_h^{\Gamma}$, 
	\algn{ 
		\label{eq:ph-star-meanzero}
		\int_F \jump{p_h^*} \,ds &= 0 \quad \text{ if } F \in \mathcal{F}_h \setminus \mathcal{F}_h^{\Gamma},
		\\
		\label{eq:flux-meanzero}
		\int_F (\alpha \bs{u}_h \cdot \bs{n} - \jump{p_h^*}) \,ds &= 0 \quad \text{ if } F \in \mathcal{F}_h^{\Gamma} .
	}
\end{lemma}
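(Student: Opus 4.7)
The plan is to test the discrete mixed equation (\ref{eq:discrete-eq1}) against the lowest-order Raviart--Thomas face basis function and then transform the result using both defining conditions for $p_h^*$ in (\ref{eq:post-processing-eqs}).

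For a face $F$ shared by $T^+, T^- \in \mathcal{T}_h$, let $\bs{a}_F^{T^\pm}$ denote the vertex of $T^\pm$ opposite $F$ and set
\begin{equation*}
\bs{v}_F|_{T^+} = \tfrac{1}{n|T^+|}\bigl(\bs{x}-\bs{a}_F^{T^+}\bigr), \qquad \bs{v}_F|_{T^-} = -\tfrac{1}{n|T^-|}\bigl(\bs{x}-\bs{a}_F^{T^-}\bigr),
\end{equation*}
extended by zero on the rest of $\Omega$. Since $\mathcal{P}_0(T;\R^n)+\bs{x}\mathcal{P}_0(T) \subset \bs{V}_{k-1}^{RTN}(T)$ and $\mathcal{P}_1(T;\R^n) \subset \bs{V}_k^{BDM}(T)$ for $k \ge 1$, this $\bs{v}_F$ lies in $\bs{V}_h$ for either choice (\ref{eq:RTN-space}) or (\ref{eq:BDM-space}), and the two local formulas are arranged precisely so that the normal component is continuous across $F$. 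Four properties will be used repeatedly: (i) $\bs{v}_F\cdot\bs{n}_F|_F = 1/|F|$ is constant on $F$; (ii) $\bs{v}_F\cdot\bs{n}|_{F'} = 0$ on every other face $F' \subset \partial T^\pm$, because $\bs{a}_F^{T^\pm} \in F'$ forces $(\bs{x}-\bs{a}_F^{T^\pm})\cdot\bs{n}_{F'}|_{F'} \equiv 0$; (iii) $\div\bs{v}_F|_{T^\pm} = \pm 1/|T^\pm|$ is constant on each element; and (iv) $\bs{v}_F = \nabla\psi_{T^\pm}$ on $T^\pm$ with $\psi_{T^\pm} = \pm\frac{1}{2n|T^\pm|}|\bs{x}-\bs{a}_F^{T^\pm}|^2 \in \mathcal{P}_2(T^\pm) \subset Q_h^*(T^\pm)$, the inclusion being the one highlighted just after (\ref{eq:post-processing-eqs}).

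Substituting $\bs{v} = \bs{v}_F$ into (\ref{eq:discrete-eq1}), property (ii) annihilates the interface integral when $F \notin \mathcal{F}_h^\Gamma$ and reduces it to $|F|^{-1}\int_F \alpha\bs{u}_h\cdot\bs{n}\,ds$ when $F \in \mathcal{F}_h^\Gamma$. Property (iii) combined with (\ref{eq:post-processing-eq2}) applied to $q = 1$ yields $(p_h,\div\bs{v}_F)_T = (p_h^*,\div\bs{v}_F)_T$ on each $T$, and an element-wise integration by parts, with the boundary sum collapsed to $F$ by (i) and (ii), rewrites $(p_h^*,\div\bs{v}_F)_\Omega$ as $-(\nabla p_h^*,\bs{v}_F)_\Omega + |F|^{-1}\int_F \jump{p_h^*}\,ds$. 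Collecting these identities, the interior case $F \notin \mathcal{F}_h^\Gamma$ gives
\begin{equation*}
(\bs{u}_h + \nabla p_h^*, \bs{v}_F)_\Omega = \frac{1}{|F|}\int_F \jump{p_h^*}\,ds,
\end{equation*}
while the interface case $F \in \mathcal{F}_h^\Gamma$ yields the analogous identity with $\jump{p_h^*} - \alpha\bs{u}_h\cdot\bs{n}$ in place of $\jump{p_h^*}$.

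Finally, property (iv) together with (\ref{eq:post-processing-eq1}) applied to $q = \psi_{T^\pm}$ gives $(\bs{u}_h + \nabla p_h^*,\bs{v}_F)_{T^\pm} = 0$, so the left-hand sides above vanish element by element and (\ref{eq:ph-star-meanzero}) and (\ref{eq:flux-meanzero}) follow. The one non-obvious point in the argument is engineering a single test function that simultaneously (a) is a gradient of a polynomial in $Q_h^*$, so that (\ref{eq:post-processing-eq1}) can be invoked, and (b) has elementwise constant divergence, so that (\ref{eq:post-processing-eq2}) can be invoked with only the constant test function $q = 1$; the lowest-order Raviart--Thomas face basis meets both requirements only because $\mathcal{P}_2 \subset Q_h^*$, which is exactly why the paper insists on $k \ge 1$.
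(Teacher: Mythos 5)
Your proof is correct and follows essentially the same route as the paper: test \eqref{eq:discrete-eq1} with the lowest-order Raviart--Thomas face function, use \eqref{eq:post-processing-eq2} to replace $p_h$ by $p_h^*$ against the piecewise-constant divergence, integrate by parts elementwise, and kill the remaining volume term via \eqref{eq:post-processing-eq1} because $\bs{v}_F$ is an elementwise gradient of a quadratic in $Q_h^*$. The only difference is cosmetic: you exhibit the potential $\psi_{T^\pm}$ explicitly, whereas the paper infers its existence from $\curl\bs{v}_F=0$.
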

\begin{proof}
	For $F \in \mathcal{F}_h$ let $\bs{v}_F$ be a test function in the lowest order Raviart--Thomas finite element such that 
	\algns{
		\bs{v}_F \cdot \bs{n}|_{F'} = 
			\begin{cases} 
				1	& \text{ if } F' = F 
				\\
				0	& \text{ if } F' \not = F
			\end{cases} \quad \text{ for } F' \in \mathcal{F}_h .
	}
	If we take this $\bs{v}_F$ in \eqref{eq:discrete-eq1} and $F \in \mathcal{F}_h \setminus \mathcal{F}_h^{\Gamma}$, then 
	\algns{
    	\LRp{\bs{u}_h,\bs{v}_F}_{\Omega} - \LRp{p_h,\div \bs{v}_F}_{\Omega}  &= 0 .
	}
	Since $\div \bs{v}_F$ is a piecewise constant function and the mean-values of $p_h^*$, $p_h$ on every tetrahedron are same, we have
	\algns{
    	\LRp{\bs{u}_h,\bs{v}_F}_{\Omega} - \LRp{p_h^*,\div \bs{v}_F}_{\Omega}  = 0 ,
	}
	and the element-wise integration by parts gives
	\algns{
    	\LRp{\bs{u}_h,\bs{v}_F}_{\Omega} - \LRa{ \jump{p_h^*}, 1 }_F + \LRp{\nabla p_h^*, \bs{v}_F}_{\Omega}  = 0 .
	}
	By the forms of shape functions of the lowest order Raviart--Thomas elements (\eqref{eq:RTN-local} with $k=1$), $\curl \bs{v}_F = 0$ on every $T \in \mathcal{T}_h$, so there exists $\phi_T \in \mathcal{P}_2(T)$ for every $T \subset \supp \bs{v}_F$ such that $\bs{v}_F|_T = \nabla \phi_T$. By this, \eqref{eq:post-processing-eq1}, and the above equation, we have 
	\algns{
    	- \LRa{ \jump{p_h^*}, 1 }_F = 0 ,
	}
	so \eqref{eq:ph-star-meanzero} follows.
	
	If $F \in \mathcal{F}_h^{\Gamma}$, then 
	\algns{
    	\LRp{\bs{u}_h,\bs{v}_F}_{\Omega} +\LRa{ \alpha \bs{u}_h \cdot \bs{n}, \bs{v}_F \cdot \bs{n} }_F - \LRp{p_h,\div \bs{v}_F}_{\Omega}  &= 0 .
	}
	The integration by parts gives 
	\algns{
    	\LRp{\bs{u}_h,\bs{v}_F}_{\Omega} +\LRa{ \alpha \bs{u}_h \cdot \bs{n} - \jump{p_h^*}, \bs{v}_F \cdot \bs{n} }_F - \LRp{\nabla p_h^*, \bs{v}_F}_{\Omega}  &= 0 ,
	}
	and we can conclude \eqref{eq:flux-meanzero} by a same argument for the proof of \eqref{eq:ph-star-meanzero}.
\end{proof}
%
%
\begin{thm}
For $\alpha$ satisfying \eqref{eq:alpha-large} with a uniform lower bound, there exist $C>0$ independent of mesh sizes and $\alpha$ such that the following local efficiency holds:
	\algn{
		\label{eq:eta-T-efficiency}
		\eta_T &\le C \| \bs{u} - \bs{u}_h \|_{0,T},
		\\
		\label{eq:eta-F-efficiency-1}
		\eta_F &\le C \sum_{F \subset \pd T} \| \bs{u} - \bs{u}_h \|_{0,T},\quad \text{ if } F \in \mathcal{F}_h \setminus \mathcal{F}_h^{\Gamma} ,
		\\
		\label{eq:eta-F-efficiency-2}
		\eta_F &\le C \LRp{ \sum_{F \subset \pd T} \| \bs{u} - \bs{u}_h \|_{0,T} + \alpha^{1/2} \| (\bs{u} - \bs{u}_h) \cdot \bs{n} \|_{0,F}} ,\quad \text{ if } F \in \mathcal{F}_h^{\Gamma} .
	}
\end{thm}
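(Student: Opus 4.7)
The plan is to prove the element estimate \eqref{eq:eta-T-efficiency} first via a Verf\"urth-style bubble-function argument and then bootstrap it, together with the mean-value identities of Lemma~\ref{lemma:jump-mean-value-zero}, to obtain the face bounds \eqref{eq:eta-F-efficiency-1}--\eqref{eq:eta-F-efficiency-2}. The central ingredients are the pointwise identity $\bs u+\nabla p=0$ on each $T$, the interface condition $\jump p=\alpha\bs u\cdot\bs n$ on $\Gamma$, and standard trace, inverse, and Poincar\'e inequalities.

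For $\eta_T$, I would set $v:=\bs u_h+\nabla p_h^*$ and start from the Verf\"urth norm equivalence $\|v\|_{0,T}^2\lesssim(v,vb_T)_T$, where $b_T$ is the interior bubble on $T$. Substituting $v=(\bs u_h-\bs u)-\nabla(p-p_h^*)$ (from $\bs u+\nabla p=0$), integration by parts on the gradient piece is boundary-free because $b_T|_{\pd T}=0$; the identity $\int_T\div(vb_T)\,dx=0$ then allows subtraction of the $P_h^0$ mean of $p-p_h^*$, Poincar\'e on $T$ bounds the result by $h_T\|\nabla(p-p_h^*)\|_{0,T}$, and the polynomial inverse estimate $\|\div(vb_T)\|_{0,T}\lesssim h_T^{-1}\|v\|_{0,T}$ absorbs the $h_T$. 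Rewriting $\nabla(p-p_h^*)=-(\bs u-\bs u_h)-v$ finally lets a Young-type absorption move $\|v\|_{0,T}^2$ to the left-hand side, yielding \eqref{eq:eta-T-efficiency}.

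For an interior face $F\in\mathcal F_h\setminus\mathcal F_h^\Gamma$, continuity of $p$ across $F$ gives $\jump{p_h^*}|_F=\jump{p_h^*-p}|_F$, while \eqref{eq:ph-star-meanzero} supplies zero mean on $F$; Poincar\'e on $F$ combined with discrete trace on the two adjacent simplices reduces $h_F^{-1/2}\|\jump{p_h^*}\|_{0,F}$ to $\sum_{T\supset F}\|\nabla(p-p_h^*)\|_{0,T}$, which via $\nabla(p-p_h^*)=-(\bs u-\bs u_h)-v$ and \eqref{eq:eta-T-efficiency} delivers \eqref{eq:eta-F-efficiency-1}. For $F\in\mathcal F_h^\Gamma$, since $\alpha\bs u_h\cdot\bs n|_F\in\mathcal P_m(F)$ is annihilated by $I-P_F^m$, the interface condition $\jump p=\alpha\bs u\cdot\bs n$ gives
\[
(I-P_F^m)\jump{p_h^*}=(I-P_F^m)\jump{p_h^*-p}+\alpha(I-P_F^m)\bigl((\bs u-\bs u_h)\cdot\bs n\bigr).
\]
The second term contributes the $\alpha^{1/2}\|(\bs u-\bs u_h)\cdot\bs n\|_{0,F}$ piece of \eqref{eq:eta-F-efficiency-2} after multiplication by $\alpha^{-1/2}$; the first term is handled just like the interior case, the zero-mean condition now being supplied by \eqref{eq:flux-meanzero} applied to $\jump{p_h^*}-\alpha\bs u_h\cdot\bs n=\jump{p_h^*-p}-\alpha(\bs u_h-\bs u)\cdot\bs n$. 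The uniform bound $\alpha\gtrsim 1$ keeps all constants $\alpha$-independent.

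The most delicate step I anticipate is the Young-inequality absorption in the element bound: the combined constant from the Poincar\'e inequality, the bubble-weighted polynomial inverse estimate, and the Verf\"urth norm-equivalence must stay strictly below the norm-equivalence lower constant for the absorption to close. The hypothesis $k\ge 1$ (so $Q_h^*\supset\mathcal P_2$, cf.\ the remark after \eqref{eq:post-processing-eqs}) is precisely what keeps all of these constants mesh-independent at the right size so that the absorption can be carried out.
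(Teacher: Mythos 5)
Your face estimates \eqref{eq:eta-F-efficiency-1}--\eqref{eq:eta-F-efficiency-2} follow the same route as the paper (continuity of $p$ resp.\ the interface condition, the mean-value identities of Lemma~\ref{lemma:jump-mean-value-zero}, trace/Poincar\'e, then reduction to element terms), and they are fine \emph{once the element bound is available}. The problem is the element bound itself.

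Your proof of \eqref{eq:eta-T-efficiency} has a genuine gap, and you have correctly located it yourself: the absorption does not close. Tracking the constants, the bubble norm equivalence gives $c_0\|v\|_{0,T}^2\le (v,vb_T)_T$ with $c_0$ small (already for constant $v$ on a triangle, $c_0=\int_T b_T/|T|=1/60$), while the term you must absorb is $\tfrac{1}{\pi}C_{\mathrm{inv}}\|v\|_{0,T}^2$ with $C_{\mathrm{inv}}$ the inverse-estimate constant for $\div(vb_T)$, which is not small. Nothing about $k\ge 1$ or $Q_h^*\supset\mathcal P_2$ controls the \emph{size} of these shape-regularity constants (that hypothesis is used for Lemma~\ref{lemma:jump-mean-value-zero}, not here), so the inequality $\tfrac{1}{\pi}C_{\mathrm{inv}}<c_0$ cannot be arranged and the argument fails. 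The deeper symptom is that your argument never invokes the defining relation \eqref{eq:post-processing-eq1} of $p_h^*$ (nor \eqref{eq:post-processing-eq2}); but without it the claimed bound $\|\bs u_h+\nabla p_h^*\|_{0,T}\le C\|\bs u-\bs u_h\|_{0,T}$ is simply false --- replace $p_h^*|_T$ by an arbitrary polynomial and the left side can be made arbitrarily large while the right side is unchanged. Any correct proof must exploit the orthogonality $(\bs u_h+\nabla p_h^*,\nabla q)_T=0$ for all $q\in Q_h^*(T)$; the paper does this by citing Lemma~3.7 of \cite{Cockburn-Zhang:2014}, whose proof is built on exactly that orthogonality together with a norm equivalence on the finite-dimensional space of $\bs u_h|_T+\nabla Q_h^*(T)$. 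To repair your argument you would need to test against $\nabla w\,b_T$ (or otherwise bring \eqref{eq:post-processing-eq1} into play) rather than against $vb_T$, so that the troublesome self-term is eliminated by orthogonality instead of by absorption.

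A minor further remark: on $\Gamma$-faces you do not need \eqref{eq:flux-meanzero} to supply a zero mean for the $(I-P_F^m)\jump{p_h^*-p}$ term, since $I-P_F^m$ already annihilates constants; that identity is needed elsewhere (Corollary~\ref{cor:duality}), not in the efficiency proof.
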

%
%
\begin{proof}
\tred{We first show \eqref{eq:eta-T-efficiency}.} 
\tred{By Lemma~3.7} in \cite{Cockburn-Zhang:2014}, there exists $C>0$ such that 
\algns{ 
	\| \bs{u}_h + \nabla p_h^* \|_{0,T} \le C \| \bs{u}_h - \bs{u} \|_{0,T},
}
so we have
\algn{ \label{eq:eta-T-estm}
	\eta_T = \|\bs{u}_h + \nabla p_h^* \|_{0,T} \le C \| \bs{u}_h - \bs{u} \|_{0,T}. 
}

We now prove \eqref{eq:eta-F-efficiency-1}. For $F \in \mathcal{F}_h \setminus \mathcal{F}_h^{\Gamma}$, using \eqref{eq:ph-star-meanzero} and $\jump{p}|_F =0$, we have (see \cite{Ainsworth:2007} or \cite[Lemma~3.5]{Cockburn-Zhang:2014})
\algn{
	\notag 
	\eta_F^2 &= \int_F h_F^{-1} ((I-P_F^0) \jump{p_h^* - p})^2 \, ds 
	\\
	\notag
	&\le 2 \sum_{T \in \mathcal{T}_h, F \subset \pd T} h_F^{-1} \int_{\pd T \cap F} ((I-P_F^0)(p - p_h^*))^2 \,ds  
	\\
	\label{eq:eta-F-intermediate-1}
	&\le C \sum_{T \in \mathcal{T}_h, F \subset \pd T} \| \nabla (p - p_h^*) \|_{0,T}^2 
	\\
	\notag
	&= C \sum_{T \in \mathcal{T}_h, F \subset \pd T} \| \bs{u} + \nabla p_h^* \|_{0,T}^2 .
}
By the triangle inequality $\| \bs{u} + \nabla p_h^* \|_{0,T} \le \| \bs{u} - \bs{u}_h \|_{0,T} + \| \bs{u}_h + \nabla p_h^* \|_{0,T}$ and \eqref{eq:eta-T-estm}, 
\algn{ \label{eq:eta-F-estm-1}
	\eta_F^2 &\le C\sum_{T \in \mathcal{T}_h, F \subset \pd T} \| \bs{u} - \bs{u}_h \|_{0,T}^2 .
}

For \eqref{eq:eta-F-efficiency-2}, using $\alpha \bs{u} \cdot \bs{n} = \jump{p}$ on $\Gamma$ and $(I - P_F^m) (\alpha \bs{u}_h \cdot \bs{n}) = 0$, 
%
%
\algns{
	\eta_F^2 &= \int_F \alpha^{-1} \LRp{(I - P_F^m) \jump{p_h^*}}^2 \, ds 
	\\
	&= \int_F \alpha^{-1} \LRp{(I - P_F^m) (\jump{p_h^*} - \jump{p} + \alpha \bs{u} \cdot \bs{n} - \alpha \bs{u}_h \cdot \bs{n} )}^2 \, ds .
}
Using \eqref{eq:alpha-large}, we can obtain
\algn{
	\eta_F^2 &\le 2 \sum_{T \in \mathcal{T}_h, F \subset \pd T} \int_{\pd T \cap F} \LRp{(I - P_F^m) (p_h^* - p)}^2 \,ds
	\\
	\notag
	&\quad + 2 \int_F \alpha (\bs{u} \cdot \bs{n} - \bs{u}_h \cdot \bs{n} )^2 \, ds .
}
Then, the first term on the right-hand side can be estimated as in \eqref{eq:eta-F-intermediate-1} with a natural assumption \tred{$h_F \le 1$}, so \eqref{eq:eta-F-efficiency-2} follows.
\end{proof}

\subsection{Reliability of post-processed pressure}

In this subsection we prove that $\eta$ is a realiable a posteriori error estimator for the error of post-processed pressure $p_h^*$.
We show a proof for $n=3$ because the same argument works for $n=2$. 

\begin{thm}
	For $p_h^*$ in \eqref{eq:post-processing-eqs} and $\eta$ in \eqref{eq:eta-def}, there exists $C>0$ independent of mesh sizes such that 
	\algn{ \label{eq:pressure-reliability}
		\| p - p_h^* \|_0 \le C \eta + \frac {C}{\pi} \LRp{ \sum_{T \in \mathcal{T}_h} {h_T^2} \| f - P_h f \|_{0,T}^2 }^{\frac 12}.
	}
\end{thm}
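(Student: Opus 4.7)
The plan is to bound $\|p - p_h^*\|_0$ by a duality argument using the assumption \eqref{eq:assumption1} and then combine the result with the already-established velocity reliability \eqref{eq:velocity-reliability}. I apply \eqref{eq:assumption1} with $q = p - p_h^*$ to obtain $\bs{w} \in H^1(\Omega;\mathbb{R}^n)$ satisfying $\bs{w}|_\Gamma = 0$, $\div \bs{w} = p - p_h^*$, and $\|\bs{w}\|_1 \le C \|p - p_h^*\|_0$. Because $\bs{w}\cdot\bs{n}|_\Gamma = 0$ kills the fault-weighted surface contribution in \eqref{eq:fault-weighted-L2}, one also obtains $\|\bs{w}\|_{\bs{V}} \le C\|p - p_h^*\|_0$. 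Testing then gives the starting identity $\|p - p_h^*\|_0^2 = \LRp{p - p_h^*, \div \bs{w}}_\Omega$.

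Next I rewrite this right-hand side by elementwise integration by parts. Using $\bs{u} = -\nabla p$ on $\Omega\setminus\Gamma$, $p|_{\partial\Omega} = 0$ and $\bs{w}|_\Gamma = 0$, every boundary contribution from $p$ vanishes, so $\LRp{p,\div \bs{w}}_\Omega = \LRp{\bs{u},\bs{w}}_\Omega$. Elementwise IBP on $p_h^*$ produces $-\LRp{\nabla p_h^*,\bs{w}}_\Omega$ plus face contributions; the $\mathcal{F}_h^\Gamma$ faces drop out once more because $\bs{w}|_\Gamma = 0$, and the remaining interior face jumps, together with Dirichlet boundary faces handled by the usual convention $\jump{p_h^*} := p_h^* - g_D = p_h^*$, assemble into a single sum over $\mathcal{F}_h^0$. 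Subtracting yields
\begin{equation*}
    \|p - p_h^*\|_0^2 = \LRp{\bs{u} + \nabla p_h^*,\bs{w}}_\Omega - \sum_{F \in \mathcal{F}_h^0}\LRa{\jump{p_h^*},\bs{w}\cdot\bs{n}}_F .
\end{equation*}

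For the volume term I split $\bs{u}+\nabla p_h^* = (\bs{u}-\bs{u}_h) + (\bs{u}_h + \nabla p_h^*)$ and use \eqref{eq:eta-T-def} to control it by $\LRp{\tnorm{\bs{u}-\bs{u}_h} + (\sum_T\eta_T^2)^{1/2}}\|\bs{w}\|_0$. For the jump term, Lemma \ref{lemma:jump-mean-value-zero} gives $P_F^0 \jump{p_h^*} = 0$, letting me replace $\bs{w}\cdot\bs{n}$ by $(I - P_F^0)(\bs{w}\cdot\bs{n})$; the standard trace/approximation bound $\|(I-P_F^0)(\bs{w}\cdot\bs{n})\|_{0,F} \le C h_F^{1/2}\|\bs{w}\|_{H^1(\omega_F)}$, the identity $h_F^{1/2}\|\jump{p_h^*}\|_{0,F} = h_F\,\eta_F$, Cauchy--Schwarz, and the finite overlap of the face patches $\omega_F$ then yield a bound $\le C\eta\,\|\bs{w}\|_1$ under the standing $h_F \lesssim 1$ assumption.

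Combining the two pieces and dividing by $\|p - p_h^*\|_0$ via $\|\bs{w}\|_{\bs{V}} \le C\|p-p_h^*\|_0$ gives $\|p - p_h^*\|_0 \le C\LRp{\tnorm{\bs{u}-\bs{u}_h} + \eta}$, after which the velocity reliability \eqref{eq:velocity-reliability} immediately produces \eqref{eq:pressure-reliability}. The main technical nuance is obtaining the correct power of $h_F$ on the jump term, which works precisely because Lemma \ref{lemma:jump-mean-value-zero} upgrades the raw $L^2$ jump to one orthogonal to constants on each face; a secondary point is checking that this lemma extends to Dirichlet boundary faces, which follows from the same Raviart--Thomas test-function argument since $\partial\Omega = \Gamma_D$ imposes no essential constraint on $\bs{V}_h$.
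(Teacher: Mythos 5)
Your proposal follows the same duality argument as the paper: apply assumption \eqref{eq:assumption1} to obtain $\bs{w}$ with $\div\bs{w}=p-p_h^*$, $\bs{w}\cdot\bs{n}|_{\Gamma}=0$, and $\|\bs{w}\|_1\le C\|p-p_h^*\|_0$; integrate by parts elementwise to write $\|p-p_h^*\|_0^2$ as a volume term plus a sum of face jump terms; split the volume term using $\bs{u}_h$; and finish with \eqref{eq:velocity-reliability}. The only point of departure is the face jump term. The paper controls $\sum_F\LRa{\jump{p_h^*},\bs{w}\cdot\bs{n}}_F$ directly by the scaled trace inequality $\|\bs{w}\cdot\bs{n}\|_{0,F}\le C(h_F^{-1/2}\|\bs{w}\|_{0,T}+h_F^{1/2}\|\nabla\bs{w}\|_{0,T})$, which already produces the factor $h_F^{-1/2}$ built into $\eta_F$ without any orthogonality; in contrast you invoke Lemma~\ref{lemma:jump-mean-value-zero} to replace $\bs{w}\cdot\bs{n}$ by $(I-P_F^0)(\bs{w}\cdot\bs{n})$ and use the Bramble--Hilbert bound, which gains an additional factor of $h_F$ that you then discard via $h_F\lesssim 1$. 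Both are valid; your version is marginally sharper per face but relies on the mean-value-zero lemma (and its extension to Dirichlet boundary faces, which you correctly observe holds by the same Raviart--Thomas test-function argument), whereas the paper's estimate avoids that machinery and the extra $h_F\lesssim1$ assumption.
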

\begin{proof}
By the assumption \eqref{eq:assumption1}, there exists $\bs{w} \in H^1(\Omega; \mathbb{R}^3)$ such that 
\algn{ \label{eq:w-def}
	\div \bs{w} = p - p_h^*, \quad \| \bs{w} \|_1 \le C \| p - p_h^* \|_0, \quad \bs{w}\cdot \bs{n}|_{\Gamma} = 0 .
}
Using this $\bs{w}$, the equation $-\nabla p =\bs{u}$ in $\Omega$, continuity of $p$ on $\Omega \setminus \Gamma$, and $p = 0$ on $\pd \Omega$, 
\algns{
	\| p - p_h^* \|_0^2 &= \LRp{ p - p_h^*, \div \bs{w}}_{\Omega} 
	\\
	&= - \LRp{\nabla( p - p_h^*), \bs{w} }_{\Omega} \pm \sum_{F \in \mathcal{F}_h \setminus \mathcal{F}_h^{\Gamma} } \LRa{\jump{p - p_h^*}, \bs{w} \cdot \bs{n} }_F 
	\\
	&= \LRp{\bs{u} + \nabla p_h^*, \bs{w} }_{\Omega} \pm \sum_{F \in \mathcal{F}_h \setminus \mathcal{F}_h^{\Gamma} } \LRa{\jump{p_h^*}, \bs{w} \cdot \bs{n} }_F 
	\\
	&= \LRp{\bs{u} - \bs{u}_h, \bs{w} }_{\Omega} + \LRp{\bs{u}_h + \nabla p_h^*, \bs{w} }_{\Omega} 
	\pm \sum_{F \in \mathcal{F}_h \setminus \mathcal{F}_h^{\Gamma} } \LRa{\jump{p_h^*}, \bs{w} \cdot \bs{n} }_F .
}
Then, \eqref{eq:pressure-reliability} follows by 
\algns{
	|\LRp{\bs{u} - \bs{u}_h, \bs{w} }_{\Omega}| &\le \| \bs{u} - \bs{u}_h \|_0 \| \bs{w} \|_0, 
	\\
	| \LRp{ \bs{u}_h + \nabla p_h^*, \bs{w} }_{\Omega} | &\le \LRp{\sum_{T \in \mathcal{T}_h} \eta_T^2 }^{\frac 12} \| \bs{w} \|_0, 
	\\
	\left|  \sum_{F \in \mathcal{F}_h \setminus \mathcal{F}_h^{\Gamma} } \LRa{\jump{p_h^*}, \bs{w} \cdot \bs{n} }_F \right| &\le \LRp{ \sum_{F \in \mathcal{F}_h \setminus \mathcal{F}_h^{\Gamma}} \eta_F^2}^{\frac 12} \| \bs{w} \|_1 ,	
}
and \eqref{eq:velocity-reliability}.
\end{proof}
With an assumption of (partial) elliptic regularity, we show that an improved estimate of $\| p - p_h^* \|_0$ is obtained.
Consider the dual problem to find $(\bar{\bs{u}}, \bar{p}) \in \bs{V} \times Q$ satisfying
\begin{subequations}
	\label{eq:dual-problem-eqs}
	\algn{ 
		\label{eq:dual-problem-eq1}
    	\LRp{\bar{\bs{u}},\bs{v}}_{\Omega}- \LRp{\bar{p}, \div \bs{v}}_{\Omega} + \LRa{\alpha 	\bar{\bs{u}}\cdot\bs{n},\bs{v}\cdot\bs{n}}_{\Gamma} &= 0 & &\forall\bs{v} \in \bs{V},
    	\\
		\label{eq:dual-problem-eq2}
	    \LRp{\div \bar{\bs{u}}, q}_{\Omega} &= \LRp{p - p_h^* , q}_{\Omega} & &\forall q \in Q, 
	}
\end{subequations}
and assume that 
\algn{ \label{eq:partial-regularity}
	\bar{\bs{u}} \in H^{\beta}(\Omega; \mathbb{R}^n) , \quad \frac 12 < \beta \le 1 , \qquad \| \bs{u} \|_{\beta} \le C \| p - p_h^* \|_0 .
}
The boundary condition of this problem is $\bar{p} = 0$ on $\pd \Omega$. By the inf-sup condition \eqref{eq:continuous-inf-sup}, there exists $C>0$ such that $ \| \bar{p} \|_0 \le C \tnorm{\bar{\bs{u}}}$. 
Furthermore, by taking $\bs{v} = \bar{\bs{u}}$, $q = \bar{p}$, we can obtain
\algns{
	\tnorm{\bar{\bs{u}}}^2 \le \LRp{ p - p_h^*, \bar{p}}_{\Omega} \le C \| p - p_h^* \|_0 \| \bar{p} \|_0, 
}
so, 
\algn{ \label{eq:dual-stability}
	\tnorm{\bar{\bs{u}}} \le C \| p - p_h^* \|_0 .
}
\begin{cor} \label{cor:duality}
Suppose that \eqref{eq:partial-regularity} holds and $\bs{V}_h = \bs{V}_h^{RTN}$ with $k=1$. Then, there exists $C>0$ independent of mesh sizes such that 
\algns{
	\| p - p_h^* \|_0 &\le \LRp{ \sum_{T \in \mathcal{T}_h} h_T^{2 \beta} \eta_T^2 }^{\frac 12}  + \LRp{ \sum_{T \in \mathcal{T}_h} \osc(f,T)^2 }^{\frac 12} 
	\\
	&\quad +\LRp{ \sum_{F \in \mathcal{F}_h^{\Gamma}} \min \{ \eta_F, \alpha^{1/2} h_F^{\beta} \eta_F \}^2 + \sum_{F \in \mathcal{F}_h \setminus \mathcal{F}_h^{\Gamma}} h_F^{2\beta-1} \eta_F^2}^{\frac 12} .
}
\end{cor}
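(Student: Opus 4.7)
The plan is an Aubin--Nitsche duality argument exploiting the partial regularity~\eqref{eq:partial-regularity}. Starting from $\|p - p_h^*\|_0^2 = (\div \bar{\bs{u}}, p - p_h^*)_\Omega$, obtained by testing \eqref{eq:dual-problem-eq2} with $q = p - p_h^*$, I perform element-wise integration by parts using $\nabla p = -\bs{u}$ on each $T$ and $\jump{p}|_\Gamma = \alpha \bs{u}\cdot \bs{n}$, then insert $\bs{u}_h$ via the dual equation \eqref{eq:dual-problem-eq1} tested with $\bs{v} = \bs{u}_h \in \bs{V}_h$ together with the identity $\div \bs{u}_h = P_h f$. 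This yields the representation
\begin{align*}
\|p - p_h^*\|_0^2 &= (\bar p, f - P_h f)_\Omega + (\bar{\bs{u}}, \bs{u}_h + \nabla p_h^*)_\Omega \\
&\quad - \sum_{F \in \mathcal{F}_h \setminus \mathcal{F}_h^\Gamma} \langle \bar{\bs{u}}\cdot \bs{n}, \jump{p_h^*}\rangle_F + \sum_{F \in \mathcal{F}_h^\Gamma} \langle \bar{\bs{u}}\cdot \bs{n}, \alpha \bs{u}_h \cdot \bs{n} - \jump{p_h^*}\rangle_F,
\end{align*}
where boundary faces are included in the first face sum with the one-sided convention $\jump{p_h^*}|_F = p_h^*|_F$, justified by $p = 0$ on $\partial \Omega$.

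The second step is to insert the lowest-order Raviart--Thomas projection $\Pi_h^{RTN} \bar{\bs{u}} \in \bs{V}_h$. Since $\Pi_h^{RTN}\bar{\bs{u}}\cdot \bs{n}|_F$ is constant on each face, while Lemma~\ref{lemma:jump-mean-value-zero} gives $\int_F \jump{p_h^*}\,ds = 0$ on non-$\Gamma$ faces and $\int_F (\alpha \bs{u}_h \cdot \bs{n} - \jump{p_h^*})\,ds = 0$ on $\mathcal{F}_h^\Gamma$, I may replace $\bar{\bs{u}}$ by $\bar{\bs{u}} - \Pi_h^{RTN}\bar{\bs{u}}$ in both face sums without changing them. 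For the specialization $k=1$ assumed in the corollary, $\alpha \bs{u}_h \cdot \bs{n}|_F \in \mathcal{P}_0(F)$ is constant and coincides with $P_F^0 \jump{p_h^*}$ on each $F \in \mathcal{F}_h^\Gamma$, so the $\Gamma$-face integrand collapses to $-(I - P_F^0)\jump{p_h^*}$, whose $L^2(F)$-norm is exactly $\alpha^{1/2}\eta_F$.

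I then bound each term separately. For $(\bar p, f - P_h f)_\Omega$, subtracting $P_h^0 \bar p$ (orthogonal to $f - P_h f$) and applying element-wise Poincar\'e with $\nabla \bar p = -\bar{\bs{u}}$ (piecewise) produces the $\osc(f,T)$ contribution paired with $\tnorm{\bar{\bs{u}}}$, which is bounded by $C\|p - p_h^*\|_0$ via \eqref{eq:dual-stability}. For the volume residual $(\bar{\bs{u}}, \bs{u}_h + \nabla p_h^*)_\Omega$, the mean-zero property of $\bs{u}_h + \nabla p_h^*$ on each $T$ (a consequence of \eqref{eq:post-processing-eq1} tested against coordinate functions, which belong to $Q_h^*$) allows subtracting the elementwise mean of $\bar{\bs{u}}$, after which the $H^\beta$ approximation $\|\bar{\bs{u}} - \bar{\bs{u}}_T\|_{0,T} \le C h_T^\beta |\bar{\bs{u}}|_{H^\beta(T)}$ produces the $h_T^{2\beta}\eta_T^2$ contribution. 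For the non-$\Gamma$ face terms, the scaled trace estimate $\|(\bar{\bs{u}} - \Pi_h^{RTN}\bar{\bs{u}})\cdot \bs{n}\|_{0,F} \le C h_F^{\beta - 1/2}|\bar{\bs{u}}|_{H^\beta(T_F)}$ combined with $\|\jump{p_h^*}\|_{0,F} = h_F^{1/2}\eta_F$ and Cauchy--Schwarz yields the $h_F^{2\beta-1}\eta_F^2$-sum after using $|\bar{\bs{u}}|_{H^\beta(\Omega)} \le C\|p - p_h^*\|_0$.

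The delicate step is the $\Gamma$-face sum, where I select the better of two bounds per face. The direct energy bound uses $|\langle \bar{\bs{u}}\cdot \bs{n}, (I-P_F^0)\jump{p_h^*}\rangle_F| \le \|\alpha^{1/2}\bar{\bs{u}}\cdot \bs{n}\|_{0,F}\eta_F$, which sums by Cauchy--Schwarz to $\|\alpha^{1/2}\bar{\bs{u}}\cdot \bs{n}\|_{0,\Gamma}(\sum_F \eta_F^2)^{1/2} \le C\|p-p_h^*\|_0 (\sum_F \eta_F^2)^{1/2}$, producing the $\eta_F^2$ alternative. The Aubin--Nitsche bound applies the trace estimate for $\bar{\bs{u}} - \Pi_h^{RTN}\bar{\bs{u}}$ against $(I-P_F^0)\jump{p_h^*}$ together with the $H^\beta$ regularity, producing the $\alpha h_F^{2\beta}\eta_F^2$ alternative. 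Taking the pointwise minimum of these two bounds on each face, summing, and dividing by $\|p - p_h^*\|_0$ yields the stated estimate (with \eqref{eq:velocity-reliability} invoked if any residual $\|\bs{u} - \bs{u}_h\|_0$ contribution needs to be absorbed). The main obstacle is this $\Gamma$-face balance: the exact accounting of $\alpha$ and $h_F$ powers, and the crucial $k = 1$ specialization provided by Lemma~\ref{lemma:jump-mean-value-zero} which reduces $\alpha \bs{u}_h\cdot \bs{n} - \jump{p_h^*}$ to the $P_F^0$-orthogonal complement $-(I - P_F^0)\jump{p_h^*}$ whose $\alpha^{-1/2}$-scaled norm is precisely $\eta_F$.
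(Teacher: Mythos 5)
Your proposal is correct and follows essentially the same duality argument as the paper: the same representation of $\|p-p_h^*\|_0^2$ via element-wise integration by parts and the dual problem, the same use of Lemma~\ref{lemma:jump-mean-value-zero} to reduce the face terms to $P_F^0$-orthogonal complements (your Raviart--Thomas projection is just a repackaging of the paper's direct subtraction of $P_F^0\bar{\bs{u}}\cdot\bs{n}$), the same fractional Poincar\'e/trace estimates for the $h^\beta$ gains, and the same two-alternative bound on $\Gamma$-faces yielding the minimum. The only cosmetic differences are testing the dual equation with $\bs{u}_h$ rather than $\bs{u}-\bs{u}_h$ and phrasing the element-wise mean-zero property of $\bs{u}_h+\nabla p_h^*$ via coordinate functions instead of the surjectivity of $\nabla:\mathcal{P}_1(T)\to\mathcal{P}_0(T;\mathbb{R}^n)$.
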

\begin{proof}
For $\bar{\bs{u}}$ in \eqref{eq:dual-problem-eqs} and $\tilde{\bs{u}}$ in \eqref{eq:variational-project-eqs}
\algn{
	\notag
	\| p - p_h^* \|_0^2 &= \LRp{ p - p_h^*, \div \bar{\bs{u}} }_{\Omega} 
	\\
	\notag
	&= - \LRp{\nabla( p - p_h^*), \bar{\bs{u}}}_{\Omega} + \sum_{T \in \mathcal{T}_h } \LRa{ {p - p_h^*}, \bar{\bs{u}}\cdot \bs{n} }_{\pd T} 
	\\
	\notag
	&= \LRp{\bs{u} + \nabla p_h^*, \bar{\bs{u}} }_{\Omega} + \sum_{T \in \mathcal{T}_h } \LRa{{p-p_h^*}, \bar{\bs{u}} \cdot \bs{n} }_{\pd T} .
	\\
	\label{eq:duality-estm1}
	&= \LRp{\bs{u} - \bs{u}_h, \bar{\bs{u}} }_{\Omega} + \LRp{ \bs{u}_h + \nabla p_h^*, \bar{\bs{u}} }_{\Omega} + \sum_{T \in \mathcal{T}_h } \LRa{{p-p_h^*}, \bar{\bs{u}} \cdot \bs{n} }_{\pd T} .
}
%
%
Taking $\bs{v} = \bs{u} - \bs{u}_h$ in \eqref{eq:dual-problem-eq1}, 
\algn{
	\notag
	\LRp{\bs{u} - \bs{u}_h, \bar{\bs{u}} }_{\Omega} &= - \LRa{ \alpha \bar{\bs{u}} \cdot \bs{n}, (\bs{u} - \bs{u}_h)\cdot \bs{n} }_{\Gamma} + \LRp{\bar{p}, \div (\bs{u} - \bs{u}_h)}_{\Omega}
	\\
	\label{eq:duality-estm2}
	&= - \LRa{ \alpha \bar{\bs{u}} \cdot \bs{n}, (\bs{u} - \bs{u}_h)\cdot \bs{n} }_{\Gamma} + \LRp{\bar{p}, f - P_h f }_{\Omega} .
}
By \eqref{eq:duality-estm1}, \eqref{eq:duality-estm2}, \eqref{eq:interface-condition}, and $\jump{p}|_F = 0$ for $F \in \mathcal{F}_h \setminus \mathcal{F}_h^{\Gamma}$, 
\algns{
	\| p - p_h^* \|_0^2 &= \LRp{ \bs{u}_h + \nabla p_h^*, \bar{\bs{u}} }_{\Omega} + \LRa{ \alpha \bs{u}_h \cdot \bs{n} - \jump{p_h^*}, \bar{\bs{u}}\cdot \bs{n} }_{\Gamma}
	\\
	&\quad \pm \sum_{F \in \mathcal{F}_h \setminus \mathcal{F}_h^{\Gamma} } \LRa{\jump{p_h^*}, \bar{\bs{u}} \cdot \bs{n} }_F + \LRp{\bar{p}, f - P_h f }_{\Omega} 
	\\
	&=: J_1 + J_2 + J_3 + J_4 .
}

To estimate $J_1$, note that $\nabla : \mathcal{P}_1(T) \to \mathcal{P}_0(T; \mathbb{R}^n)$ is surjective, so we get
\algns{
	|\LRp{ \bs{u}_h + \nabla p_h^*, \bar{u}}_T| &= \LRp{ \bs{u}_h + \nabla p_h^*, \bar{u} - \nabla \phi}_T \quad \forall \phi \in \mathcal{P}_1(T) 
	\\
	&\le C \| \bs{u}_h + \nabla p_h^* \|_{0,T} h_T^{\beta} \| \bar{\bs{u}} \|_{H^{\beta}(T)}
	\\
	&\le C h_T^{\beta} \eta_T \| \bar{\bs{u}} \|_{H^{\beta}(T)}
}
where we used \eqref{eq:post-processing-eq1}, a Poincare inequality for fractional Sobolev spaces (\cite[Lemma~3.1]{Bellido-Mora-Corral:2014}) with a standard scaling argument, and \eqref{eq:eta-T-def}. Then, by the Cauchy--Schwarz inequality and the integral form of fractional Sobolev norm \cite[Chapter~14]{Brenner-Scott-book}),
\algns{
	|J_1| &= \sum_{T \in \mathcal{T}_h} |\LRp{ \bs{u}_h + \nabla p_h^*, \bar{u}}_T| 
	\\
	&\le C \LRp{ \sum_{T \in \mathcal{T}_h} h_T^{2\beta} \eta_T^2 }^{\frac 12} \| \bar{\bs{u}} \|_{H^{\beta}(\Omega; \mathbb{R}^n)} 
	\\
	&\le C \LRp{ \sum_{T \in \mathcal{T}_h} h_T^{2\beta} \eta_T^2 }^{\frac 12} \| p - p_h^* \|_0 .
}

To estimate $J_2$ and $J_3$, we first note that one can obtain 
\algn{ \label{eq:beta-approx}
	\| \bar{\bs{u}} \cdot \bs{n} - P_F^0 \bar{\bs{u}} \cdot \bs{n} \|_{0,F} \le C h^{\beta} \| \bar{\bs{u}} \|_{H^{\beta}(T; \mathbb{R}^n)}, \quad F \subset \pd T
}
by a trace theorem of fractional Sobolev spaces (see, e.g., \cite{Renardy-Rogers-book}), the Poincare inequality for fractional Sobolev spaces, and a standard scaling argument. 
Since $\bs{u}_h \cdot \bs{n}|_F \in \mathcal{P}_0(F)$ and $P_F^0 (\alpha \bs{u}_h \cdot \bs{n} - \jump{p_h^*}) = 0$ for $F \in \mathcal{F}_h^{\Gamma}$ by \eqref{eq:flux-meanzero}, 
\algn{
	\label{eq:dual-flux-estm1}
	\LRa{ \alpha \bs{u}_h \cdot \bs{n} - \jump{p_h^*}, \bar{\bs{u}} \cdot \bs{n}}_F &= \LRa{ \jump{p_h^*} - P_F^0 \jump{p_h^*}, \bar{\bs{u}} \cdot \bs{n} }_F  
	\\
	\label{eq:dual-flux-estm2}
	&= \LRa{ \jump{p_h^*} - P_F^0 \jump{p_h^*}, \bar{\bs{u}} \cdot \bs{n} - P_F^0 \bar{\bs{u}} \cdot \bs{n}}_F 	
}
for $F \in \mathcal{F}_h^{\Gamma}$. From \eqref{eq:dual-flux-estm1} and \eqref{eq:dual-flux-estm2}, we can obtain either
\algns{
	| \LRa{\alpha \bs{u}_h \cdot \bs{n} - \jump{p_h^*}, \bar{\bs{u}} \cdot \bs{n}}_F| &\le \eta_F \| \alpha^{1/2} \bar{\bs{u}} \cdot \bs{n} \|_{0,F}
}
or 
\algns{
	| \LRa{\alpha \bs{u}_h \cdot \bs{n} - \jump{p_h^*}, \bar{\bs{u}} \cdot \bs{n}}_F| &\le C \alpha^{1/2} h_F^{\beta} \eta_F \| \bs{u} \|_{H^{\beta}(T; \mathbb{R}^n)}, \quad F \subset \pd T. 
}
The mean-value zero property \eqref{eq:ph-star-meanzero} gives 
\algns{
	\LRa{ \jump{p_h^*}, \bar{\bs{u}} \cdot \bs{n}}_F = \LRa{ \jump{p_h^*}, \bar{\bs{u}} \cdot \bs{n} - P_F^0 \bar{\bs{u}} \cdot \bs{n}}_F \quad \forall F \in \mathcal{F}_h \setminus \mathcal{F}_h^{\Gamma} ,
}
so we can obtain
\algns{
	| \LRa{ \jump{p_h^*}, \bar{\bs{u}} \cdot \bs{n}}_F | &\le C \eta_F h_F^{\beta-1/2} \| \bs{u} \|_{H^{\beta}(T; \mathbb{R}^n)}, \quad F \subset \pd T
}
for $F \in \mathcal{F}_h \setminus \mathcal{F}_h^{\Gamma}$. Combining all of these estimates, 
\algns{
	|J_2 + J_3| &\le C \LRp{ \sum_{F \in \mathcal{F}_h^{\Gamma}} \min \{ \eta_F, \alpha^{1/2} h_F^{\beta} \eta_F \}^2 + \sum_{F \in \mathcal{F}_h \setminus \mathcal{F}_h^{\Gamma}} h_F^{2\beta-1} \eta_F^2}^{\frac 12}
	\\
	&\quad \times \| p - p_h^* \|_0 .
}
Finally, $J_4$ can be estimated as
\algns{
	|J_4| &\le \sum_{T \in \mathcal{T}_h} \frac{h_T}{\pi} \| \nabla \bar{p} \|_{0,T} \| f - P_h f \|_{0,T}
	\\
	&\le \| \bar{\bs{u}} \|_0 \frac 1{\pi} \LRp{ \sum_{T \in \mathcal{T}_h} \osc(f,T)^2 }^{\frac 12} 
	\\
	&\le C \| p - p_h^* \|_0 \frac 1{\pi} \LRp{ \sum_{T \in \mathcal{T}_h} \osc(f,T)^2 }^{\frac 12} 
}
by \eqref{eq:dual-stability}, so the conclusion follows.
\end{proof}
\begin{rmk}
In Corollary~\ref{cor:duality}, if $\alpha \ll h_F^{2\beta}$, then an improved error bound with the a posteriori error estimator terms are obtained. However, the data oscillation error term with $\osc(f,T)$ is \tred{the} same, so data oscillation can be a dominant factor in error bounds. If $f \in H^1(\Omega)$, then an improved bound $\osc(f,T) \le \frac{\tred{h_T^2}}{\pi} \| f \|_{1,T}$ can be obtained for each $\osc(f,T)$. This observation can explain convergence of $\| p - p_h^* \|_0$ faster than the one of $\| \bs{u} - \bs{u}_h \|_0$. 
\end{rmk}

\section{Numerical results}
In this section we present results of numerical experiments. All experiments are done with the finite element package FEniCS (version 2019.1.0 \cite{fenicsbook}). In particular, the marked elements for refinement are refined using the built-in adaptive mesh refinement algorithm in FEniCS. 

In the first numerical experiment let $\Omega = [0,1]\times [0,1]$ with fault $\Gamma = \{1/2\} \times [1/4, 3/4] \subset \Omega$ (see the first figure in Figure~\ref{fig:exact-pressure}). The manufactured solution (see the middle and right figures in Figure~\ref{fig:exact-pressure}) for this test case is given by
\algn{ \label{eq:exact-pressure}
p(x,y) = 
\begin{cases}
  0 & \text{ if } y < \frac 14 \text{ or } y > \frac 34 \\
  \sin \frac{3 \pi x}{2} \cos^2 \LRp{2\pi \LRp{y-\frac 12}} & \text{ if } x < \frac 12 \text{ and } \frac 14 \le y \le \frac 34 \\
  -\sin \frac{3 \pi (1-x)}{2} \cos^2 \LRp{2 \pi \LRp{y-\frac 12}} & \text{ if } x > \frac 12 \text{ and } \frac 14 \le y \le \frac 34.
\end{cases}
}
\begin{figure}[h]  
	\centering
	\begin{subfigure}[b]{0.3\textwidth}        
	    \includegraphics[trim=0 0 0 0,clip,width=\textwidth]{./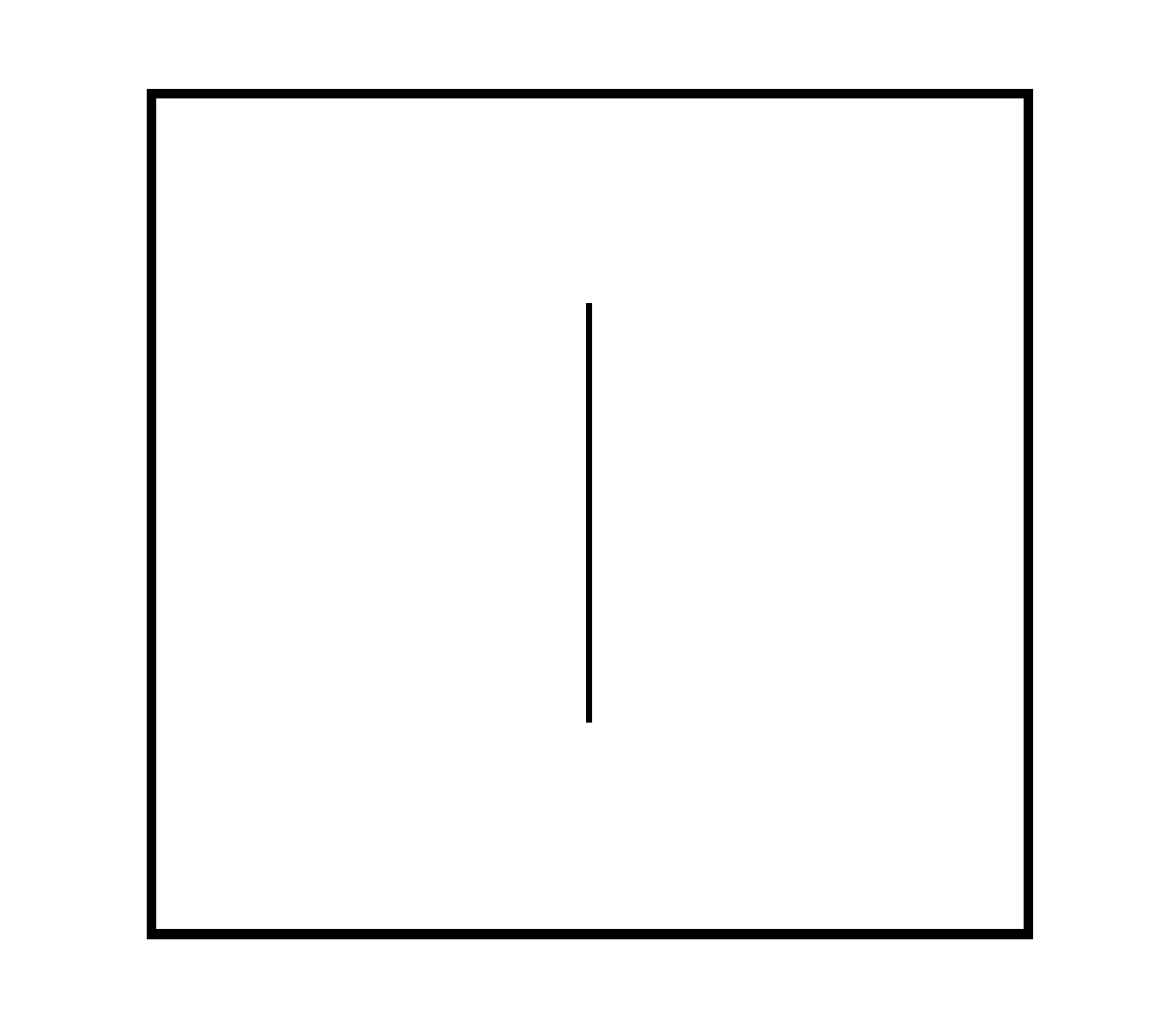} 
    \end{subfigure}
        ~
	\begin{subfigure}[b]{0.225\textwidth}
  			\includegraphics[trim=0 -80 0 0,width=\textwidth]{./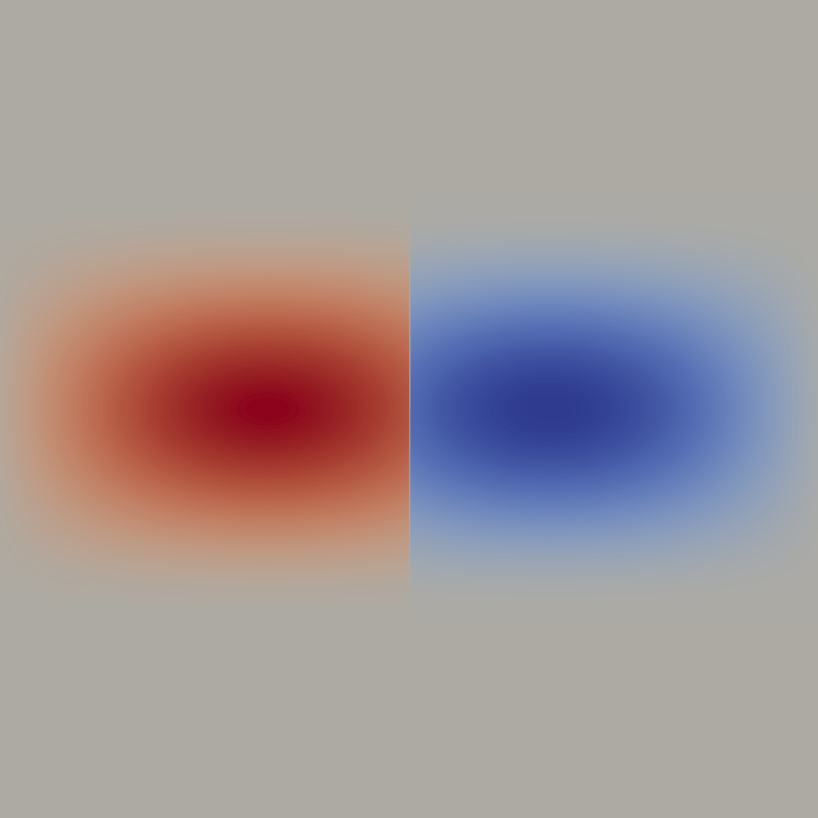} 
    \end{subfigure} 
  		~
  	\begin{subfigure}[b]{0.3\textwidth}
  			\includegraphics[trim=-50 0 0 0,width=\textwidth, height=\textwidth]{./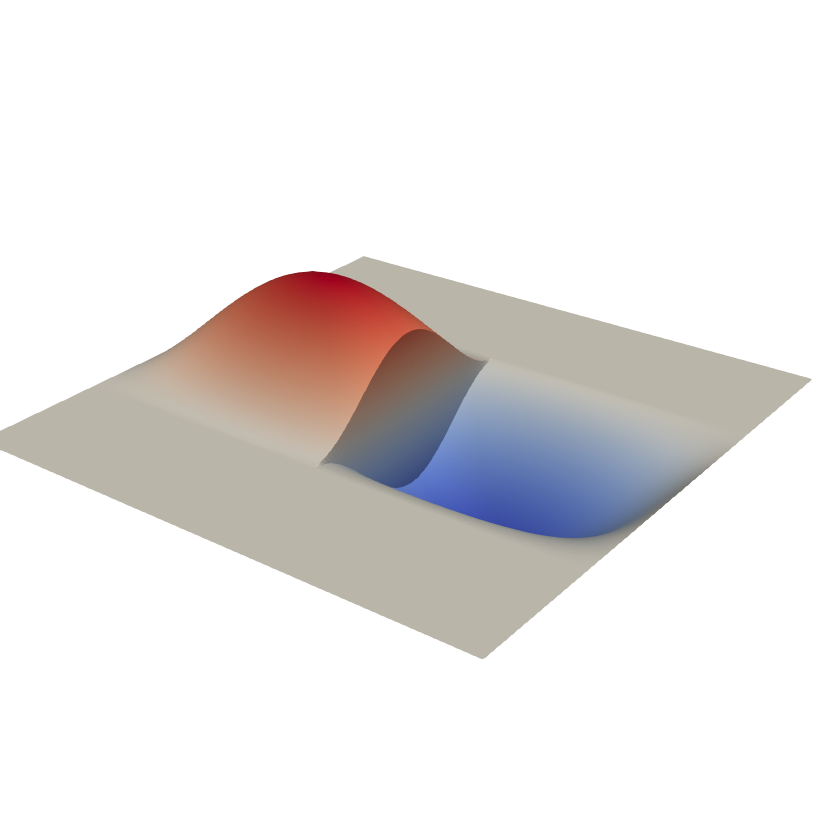}
  	\end{subfigure}
  	\caption{The domain with a vertical fault in numerical experiments (left figure) and the graphs of the pressure field in \eqref{eq:exact-pressure} (middle and right figures). }
  	\label{fig:exact-pressure}
\end{figure}

\begin{figure}[h]
	\begin{center}
 		\includegraphics[width=1.\linewidth]{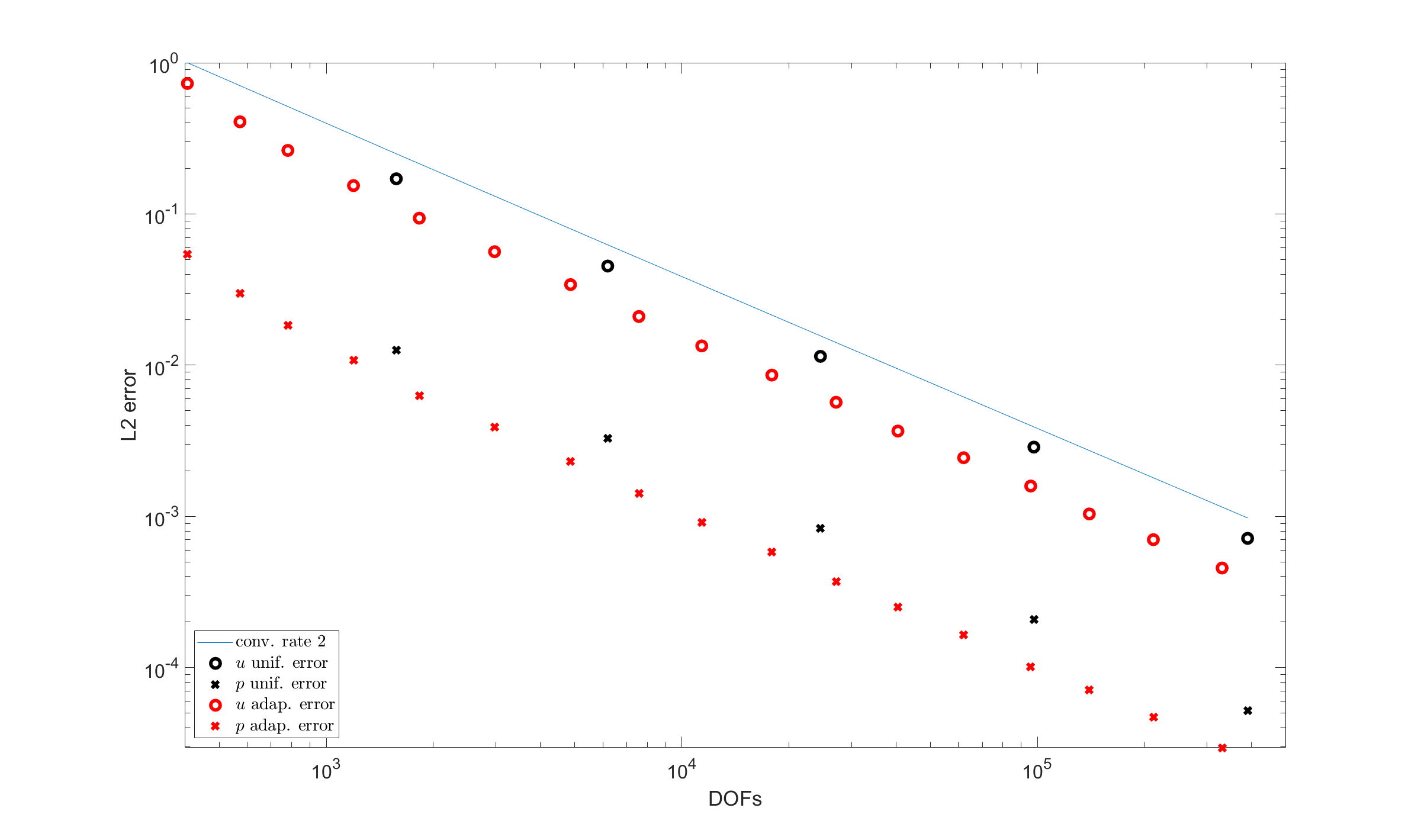}
 	\end{center}
	\caption{Comparison of convergence of errors for uniform and adaptive refinements. The errors are computed with the manufactured solution \eqref{eq:exact-pressure}. Pressure errors are computed with the post-processed pressure $p_h^*$.}
	\label{fig:eg1-convergence}
\end{figure}

\begin{figure}[h]  
	\hspace{-0.cm}
	\begin{subfigure}[b]{0.7\textwidth}        
		\includegraphics[width=.45\linewidth]{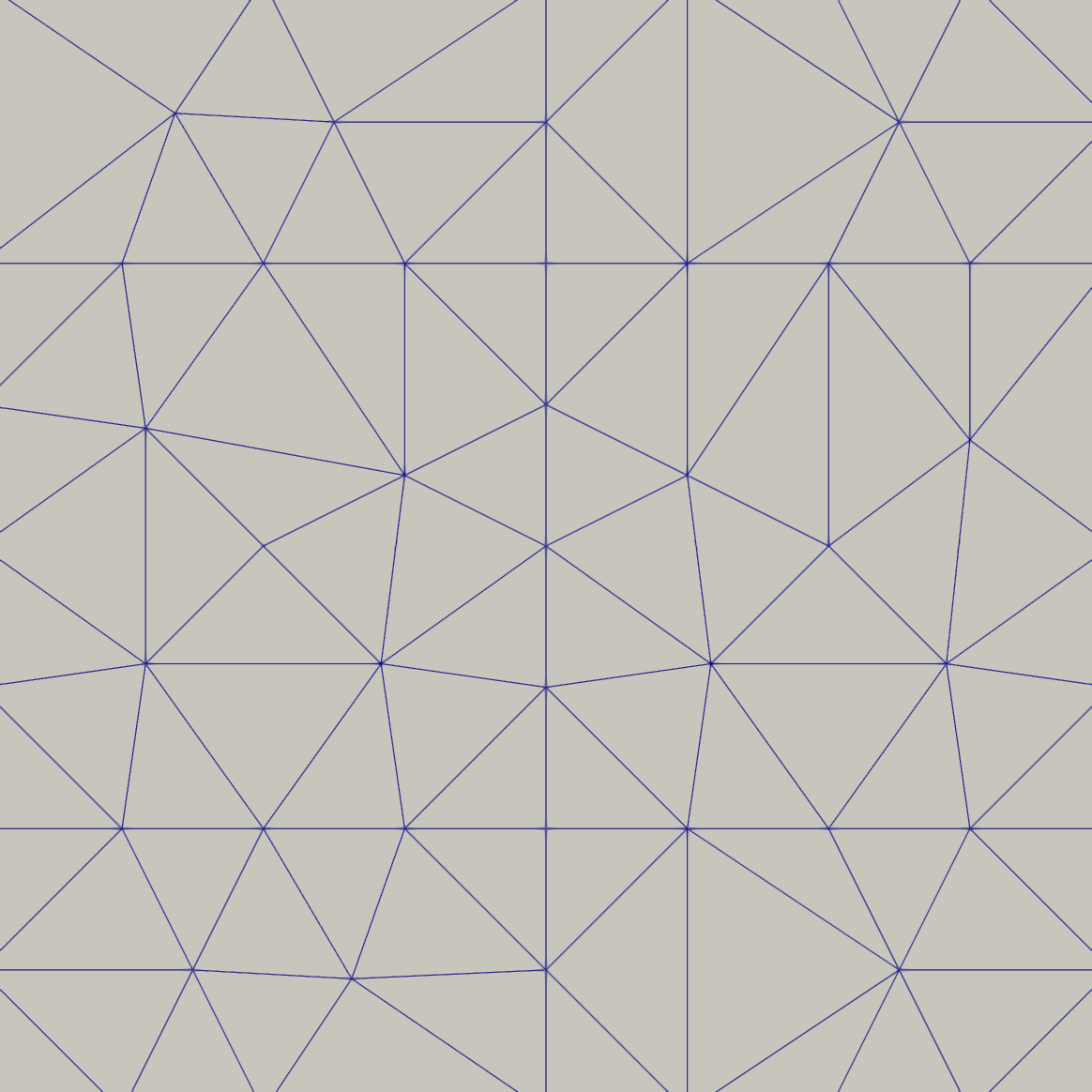} ~\hspace{3mm}
 		\includegraphics[height=.45\linewidth]{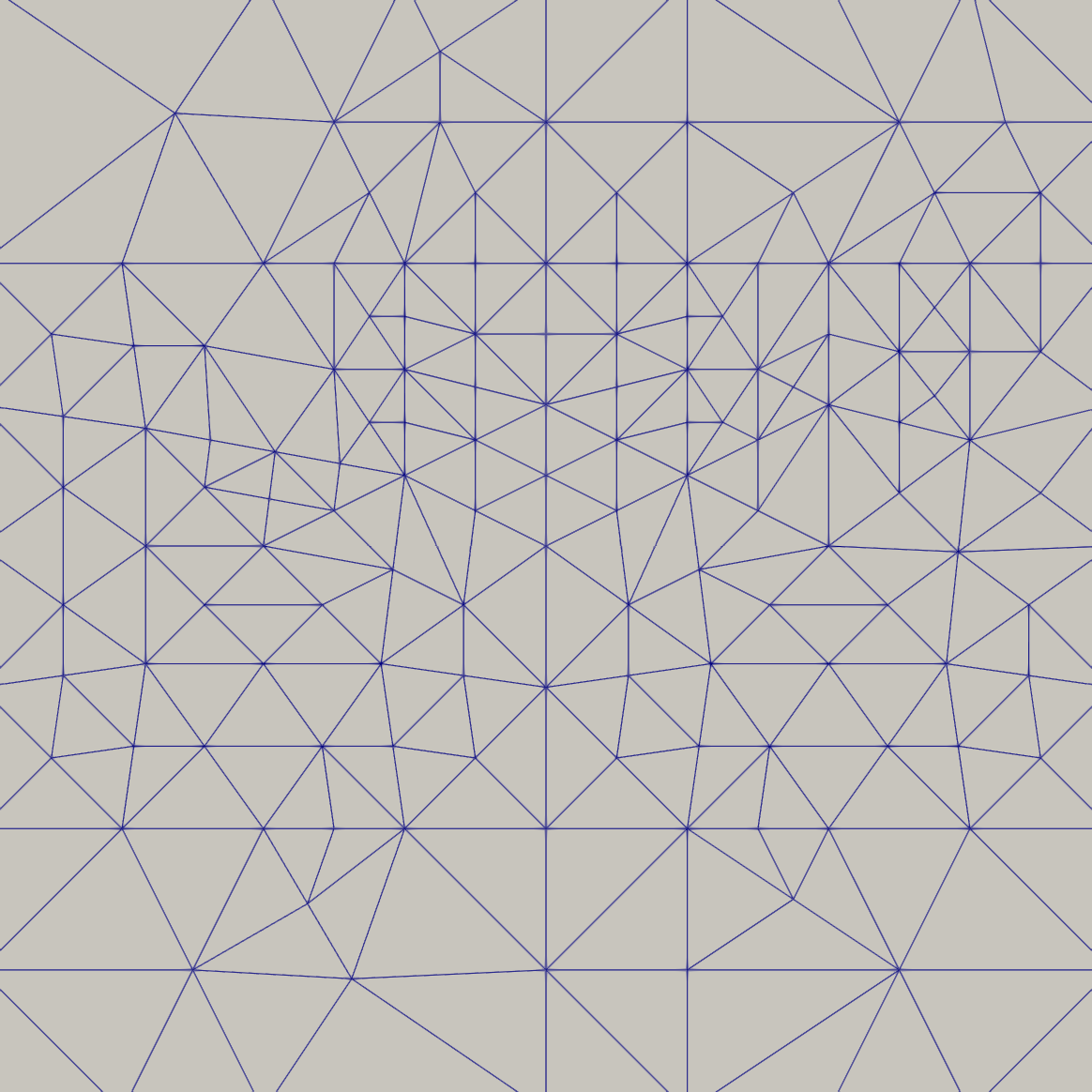}
    \end{subfigure} 
    \\
    \vspace{5mm}
	\hspace{-0.cm}
	\begin{subfigure}[b]{0.7\textwidth}
 		\includegraphics[height=.45\linewidth]{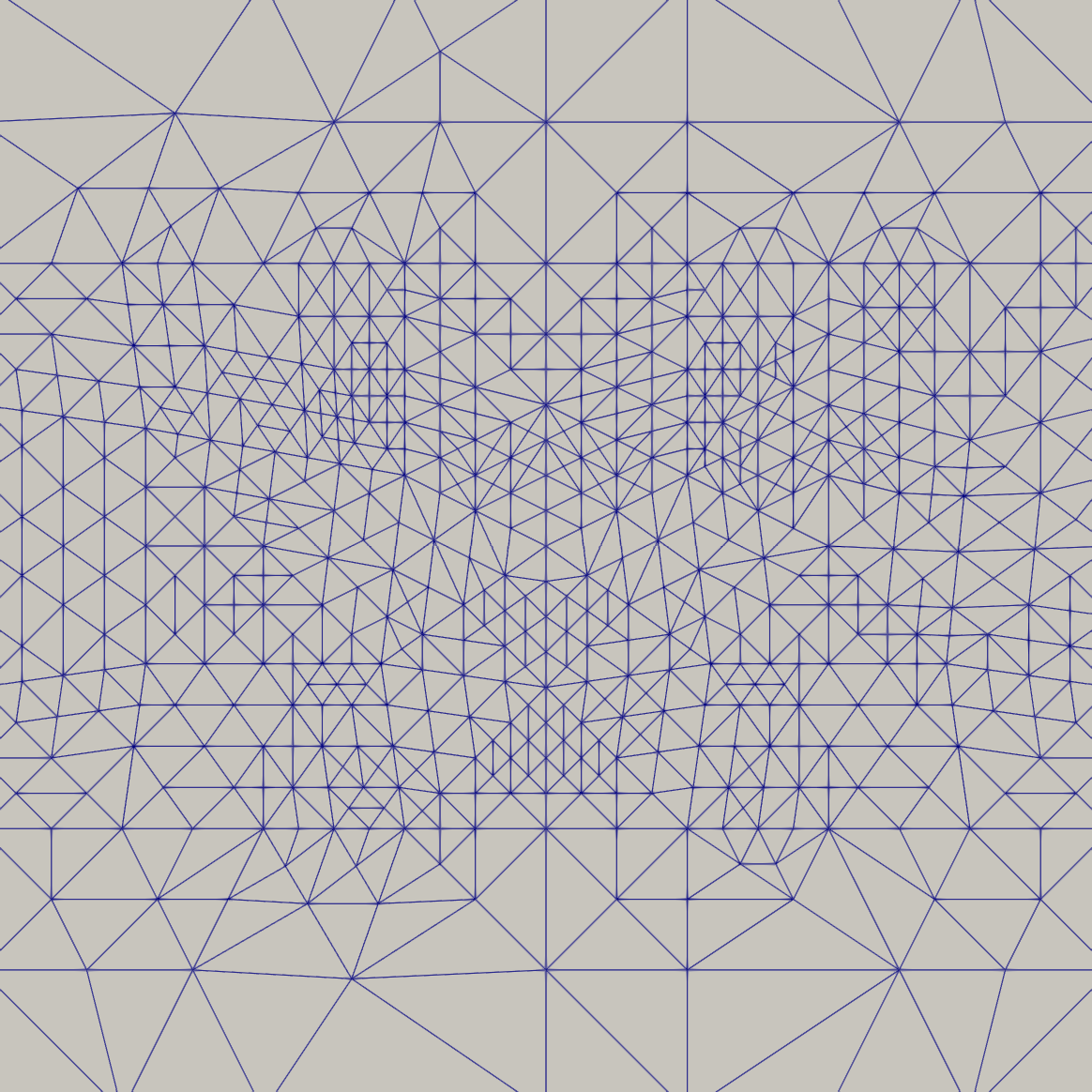} ~\hspace{3mm}
 		\includegraphics[height=.45\linewidth]{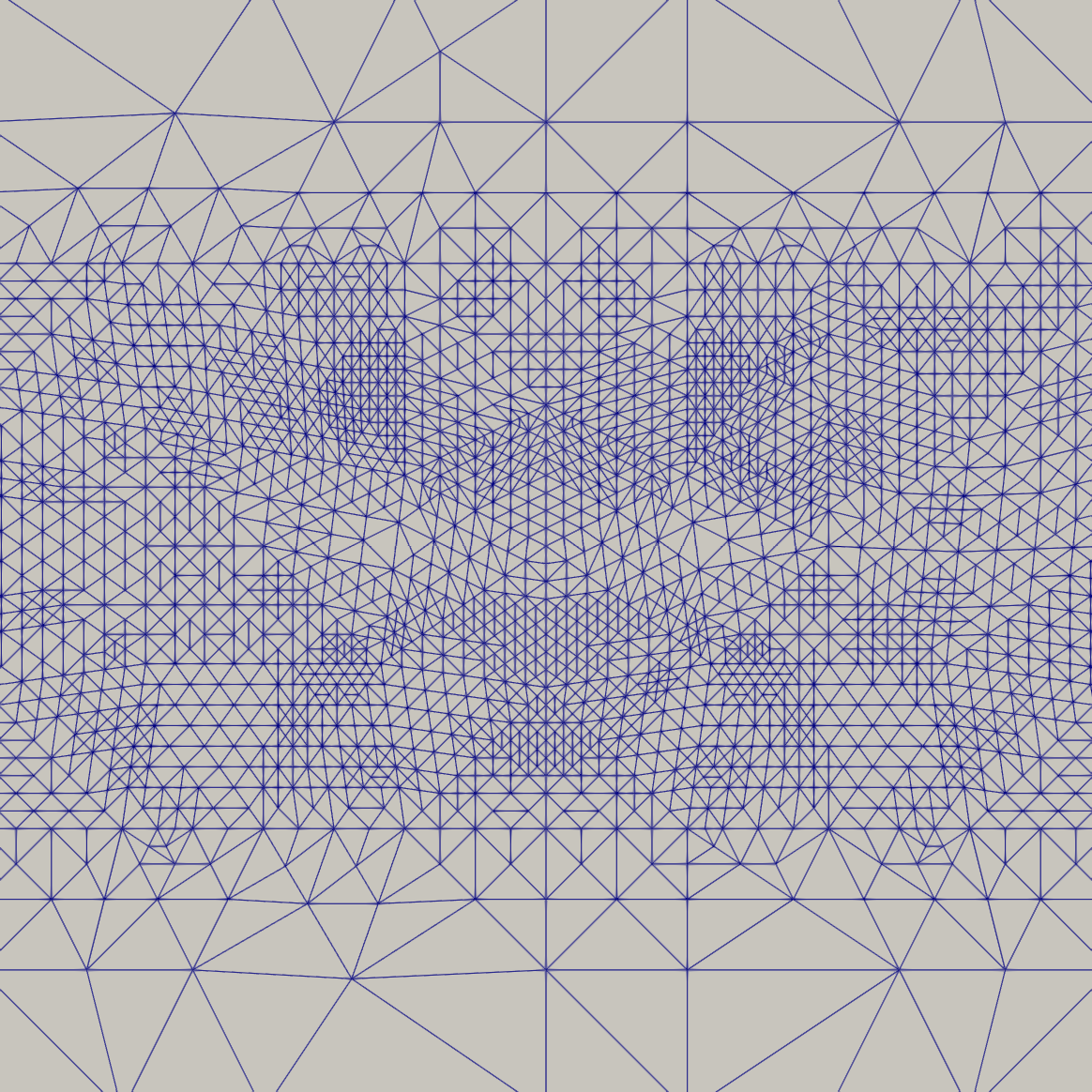}
  	\end{subfigure}
  	\caption{\tred{The initial, and the 3rd, 6th, 9th refined meshes in adaptive solves with the manufactured solution \eqref{eq:exact-pressure}.} }
  	\label{fig:ex1-stages}
\end{figure}

\begin{table}[!h]  
\begin{tabular}{>{\small}c >{\small}c >{\small}c >{\small}c >{\small}c >{\small}c >{\small}c >{\small}c >{\small}c >{\small}c >{\small}c >{\small}c >{\small}c >{\small}c >{\small}c >{\small}c >{\small}c >{\small}c >{\small}c >{\small}c}
\hline
{Dofs} & 407 & 572 & 780 & 1193 & 1827 & 2974 & 4864 & 7573 & 11375 & 17908 \\ 
\hline \hline
{Eff.} & 1.43 & 1.63 & 1.62 & 1.57& 1.63 & 1.57&  1.59& 1.59 & 1.55& 1.57 \\
\hline  
\end{tabular}        
\caption{ \tred{The numbers of degrees of freedom (Dofs) and effectivity indices (Eff.) in adaptive solves with the manufactured solution \eqref{eq:exact-pressure}.} }  
\label{effectivity} 
\end{table}


%
\begin{figure}[h]  
	\hspace{-0.cm}
	\begin{subfigure}[b]{0.7\textwidth}        
		\includegraphics[width=.45\linewidth]{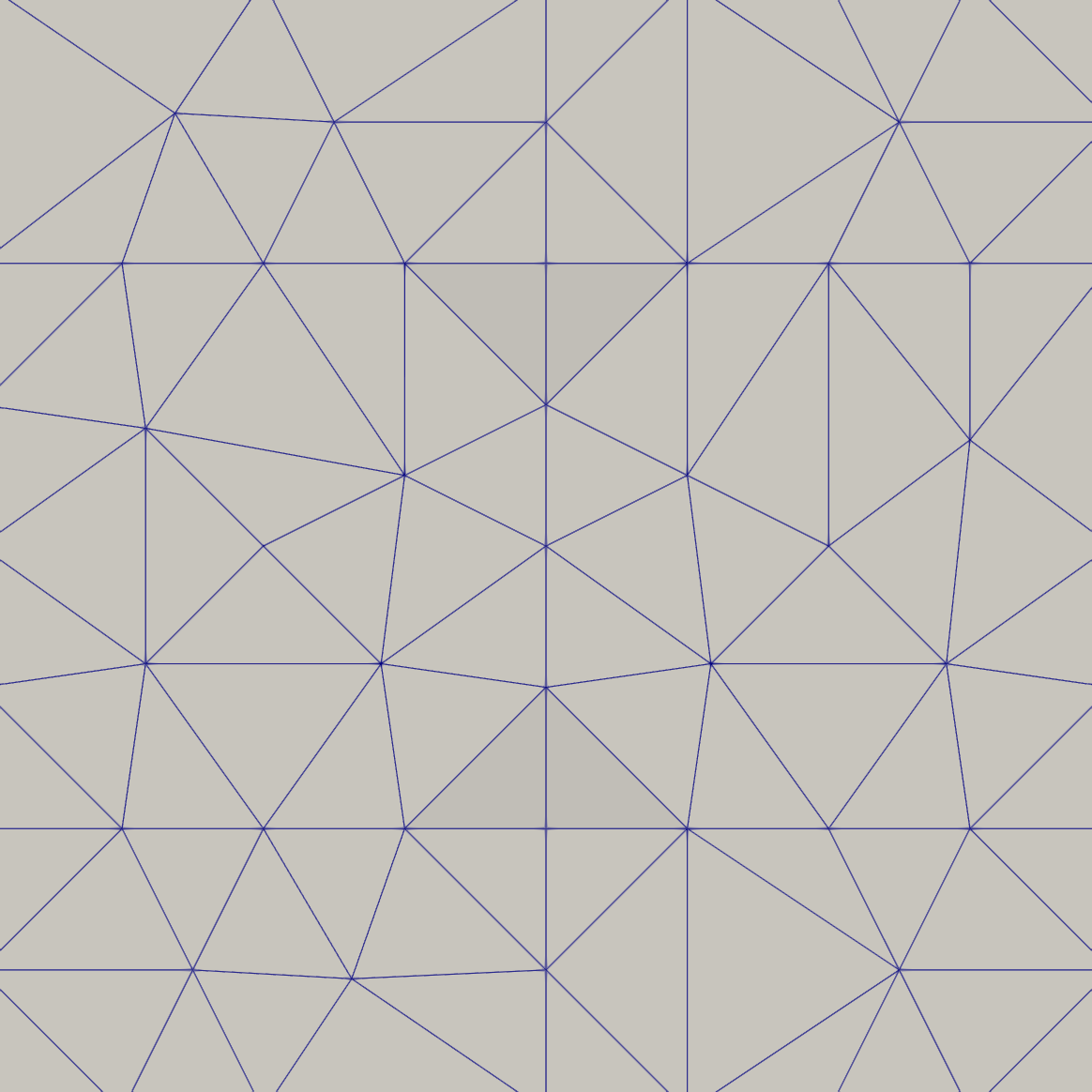} ~\hspace{3mm}
 		\includegraphics[height=.45\linewidth]{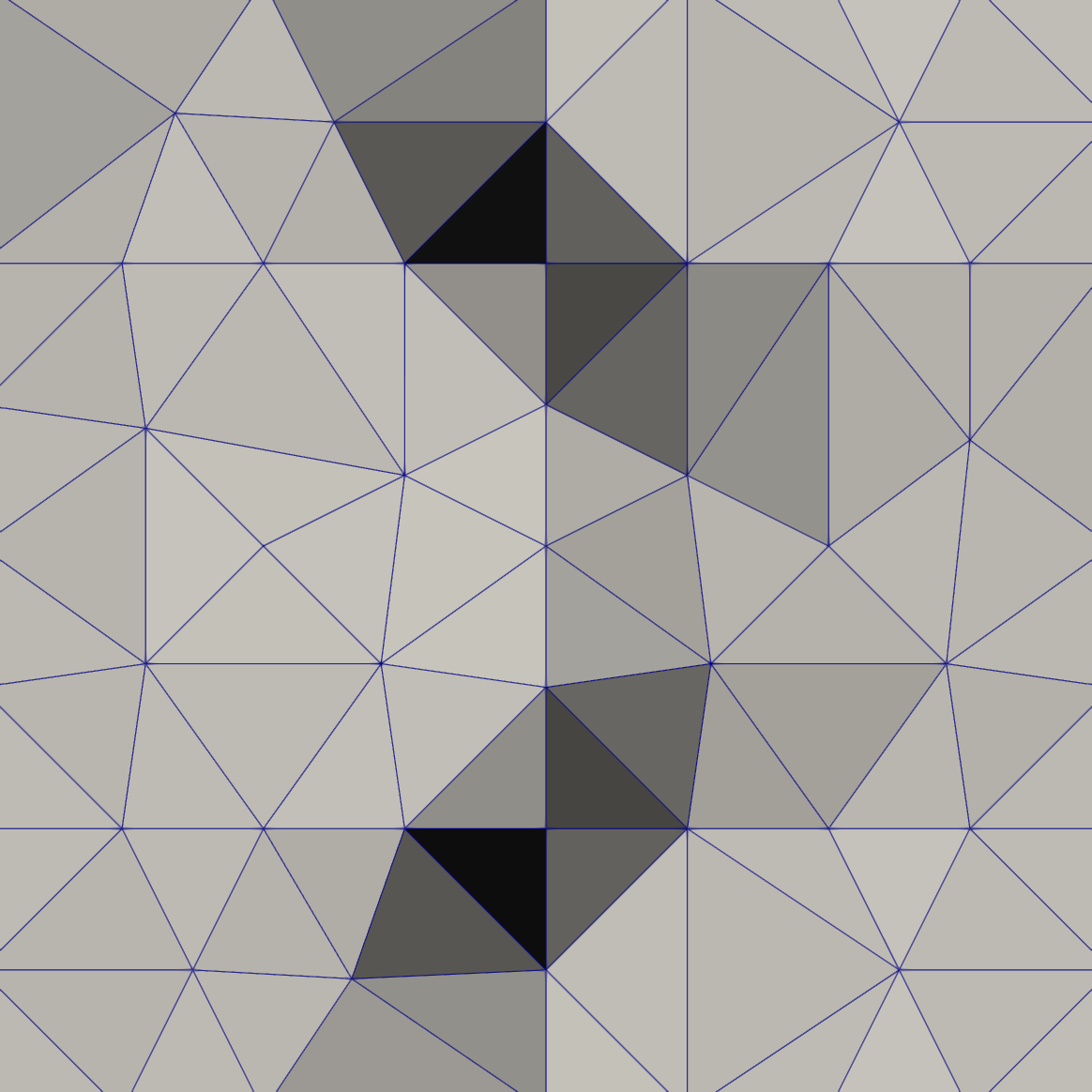}
    \end{subfigure} 
    \\
    \vspace{5mm}
	\hspace{-0.cm}
	\begin{subfigure}[b]{0.7\textwidth}
 		\includegraphics[height=.45\linewidth]{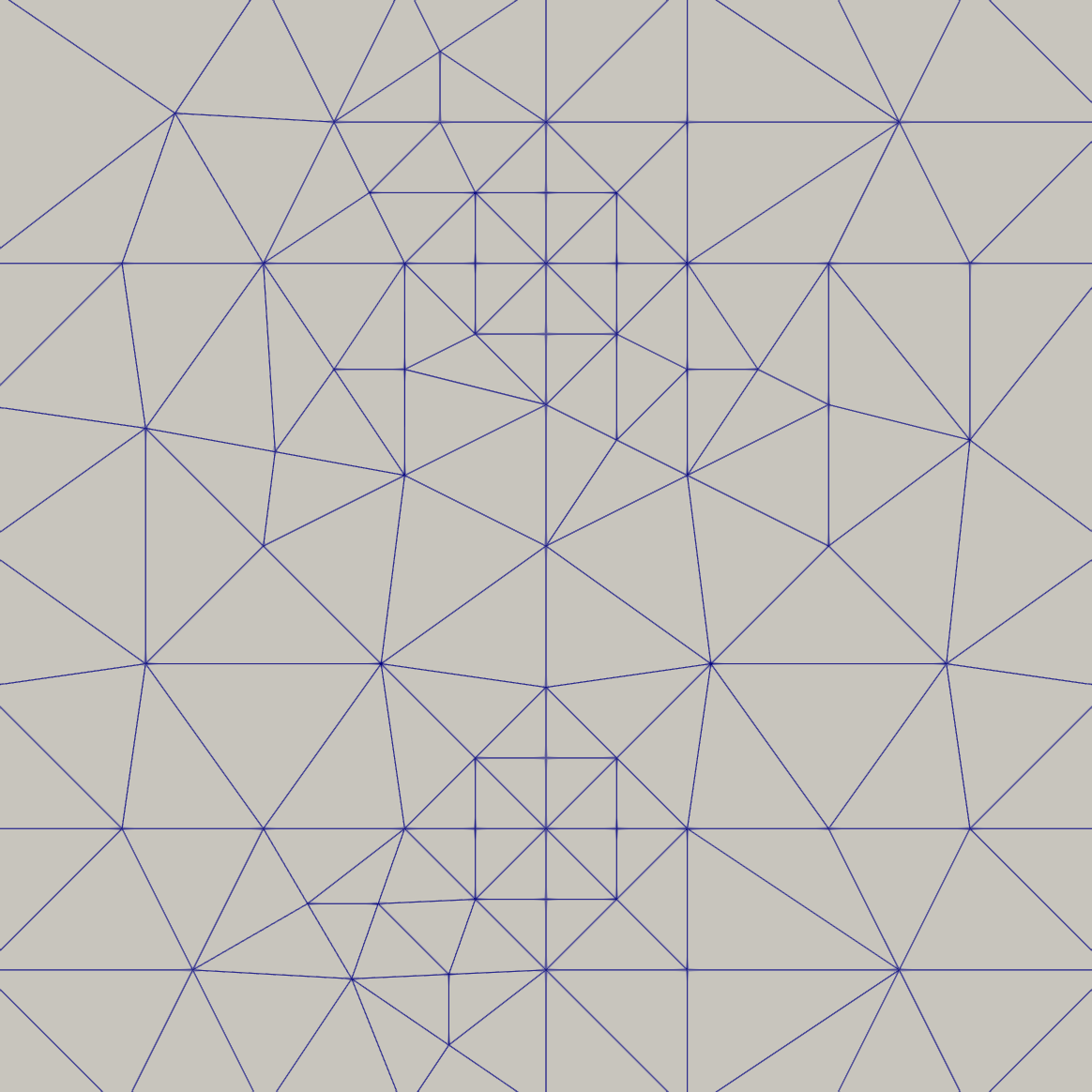} ~\hspace{3mm}
 		\includegraphics[height=.45\linewidth]{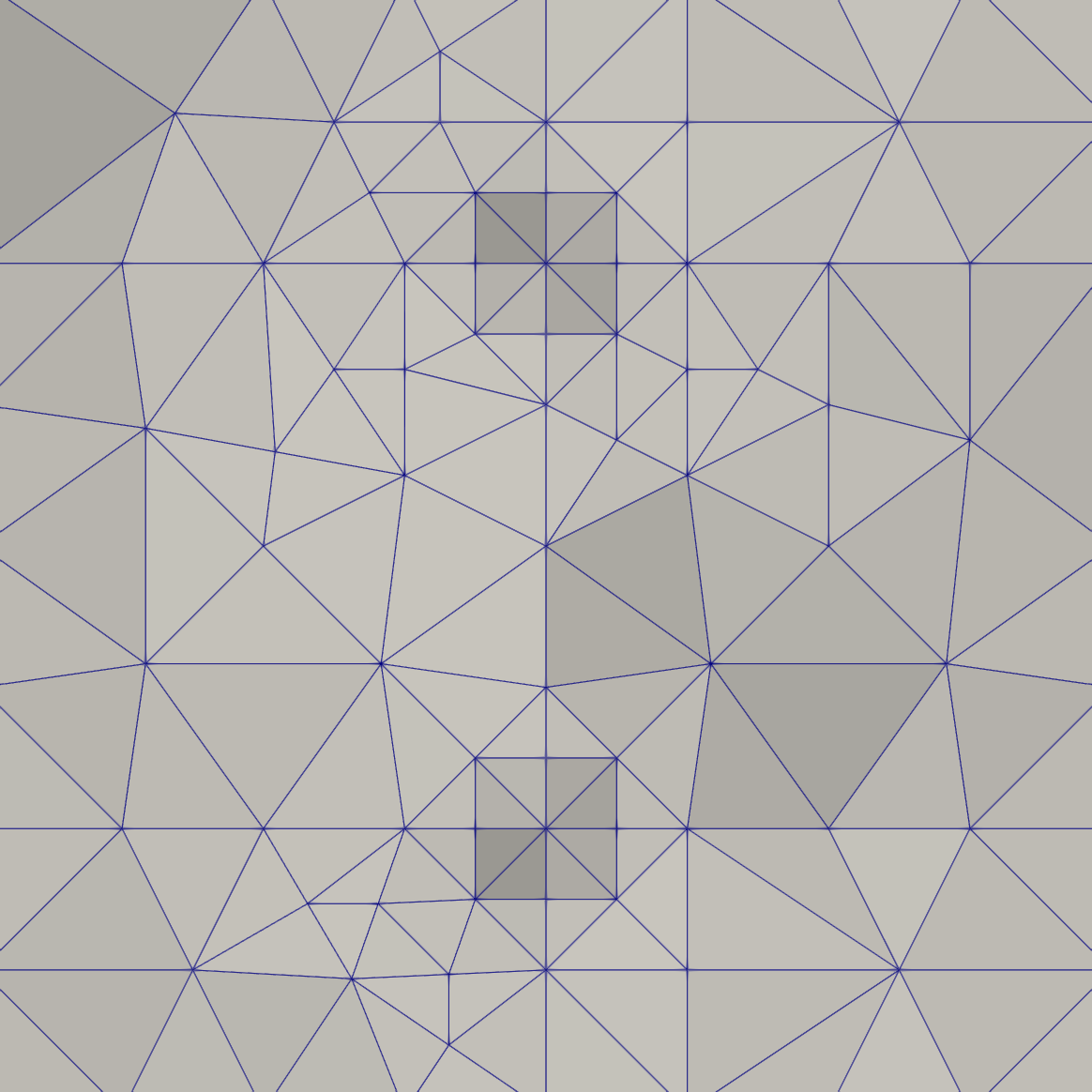}
  	\end{subfigure}
  	\caption{\tred{Distribution of $\{\tilde{\eta}_{\Gamma,T}\}$ (left), $\{\tilde{\eta}_{0,T}\}$ (right) in the initial and the first mesh refinement for $\alpha = 0.1$ (color scale: white = 0, black = 2.0e-4)} }
  	\label{fig:eta-alpha-tenth}
\end{figure}
\begin{figure}[h]  
	\hspace{-0.cm}
	\begin{subfigure}[b]{0.7\textwidth}        
		\includegraphics[width=.45\linewidth]{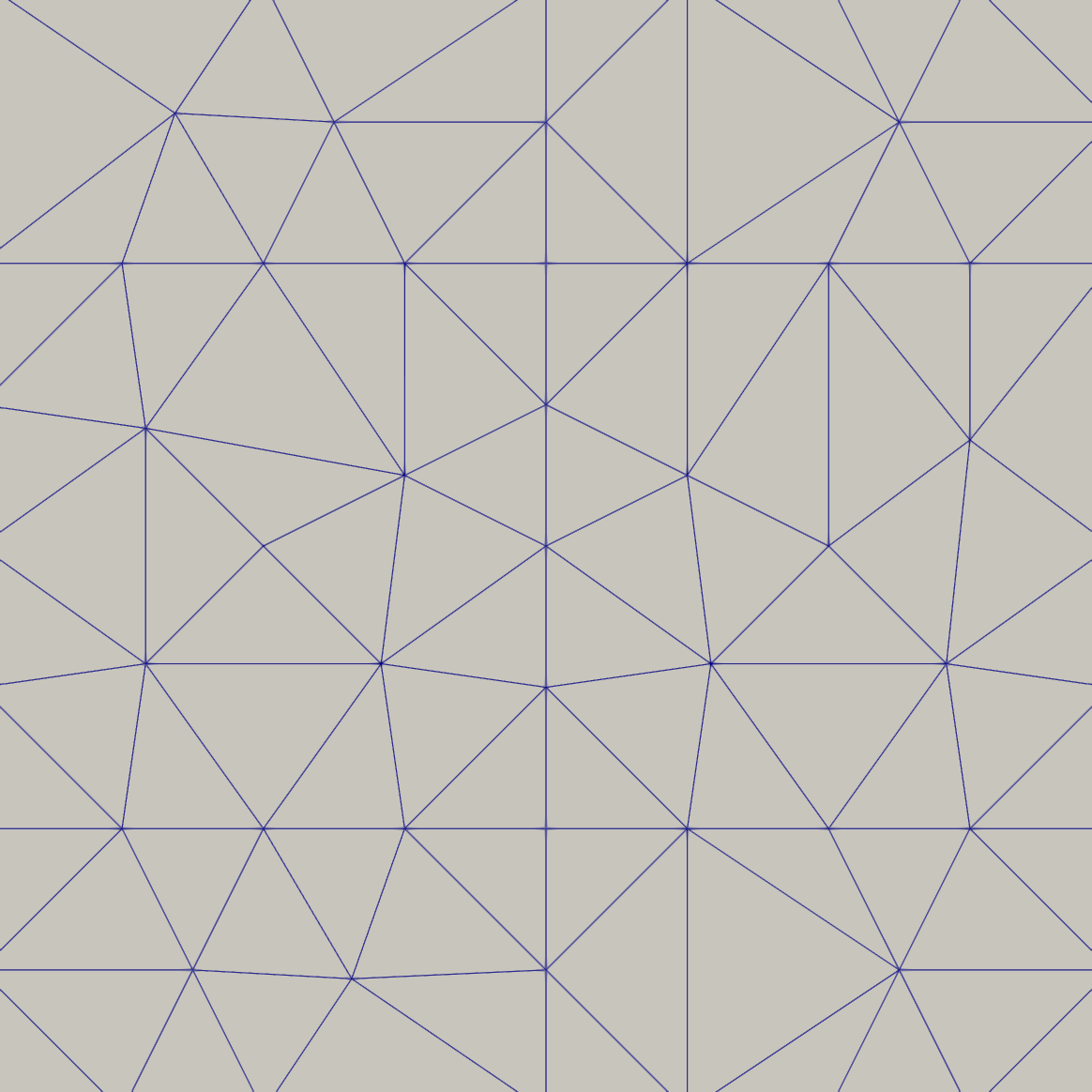} ~\hspace{3mm}
 		\includegraphics[height=.45\linewidth]{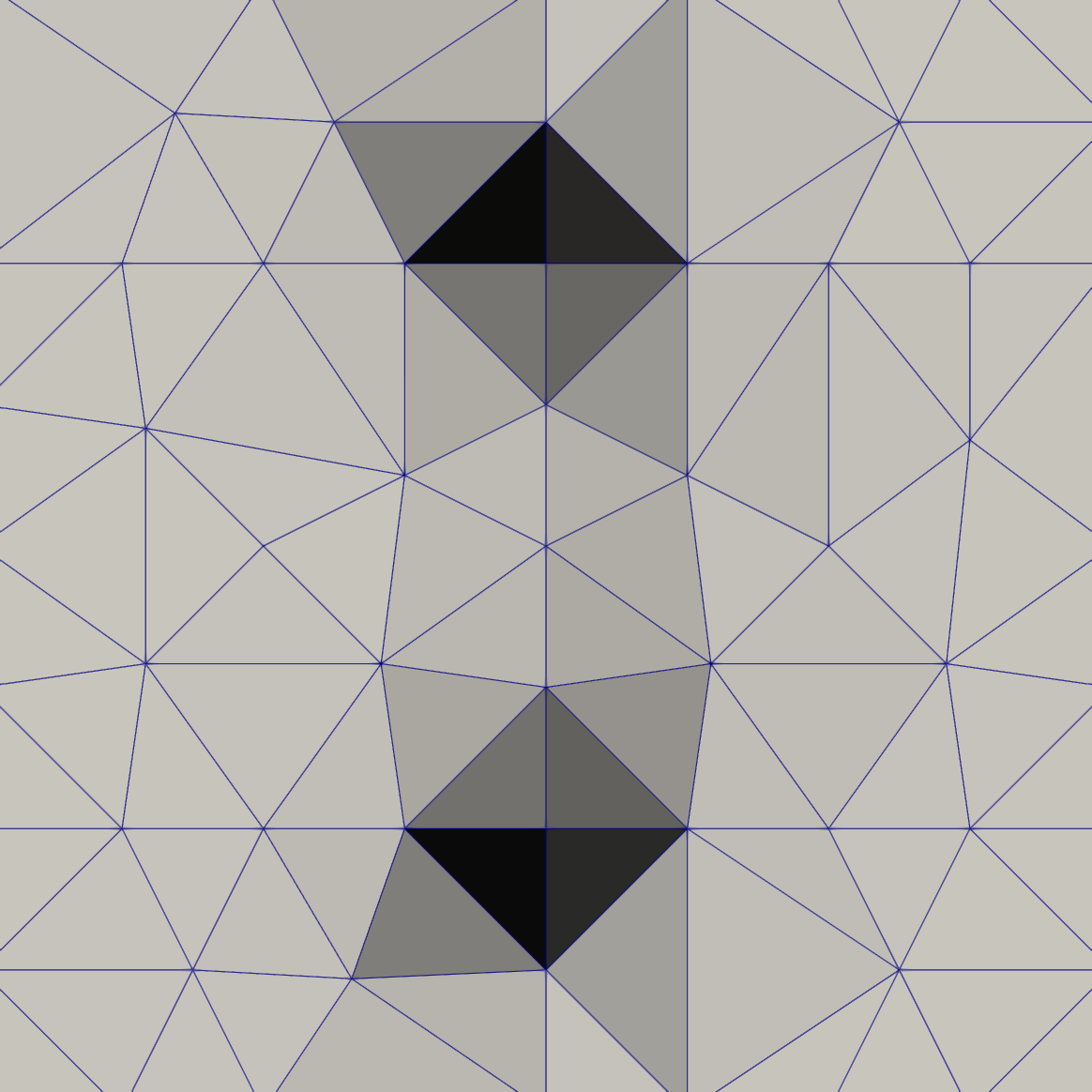}
    \end{subfigure} 
    \\
    \vspace{5mm}
	\hspace{-0.cm}
	\begin{subfigure}[b]{0.7\textwidth}
 		\includegraphics[height=.45\linewidth]{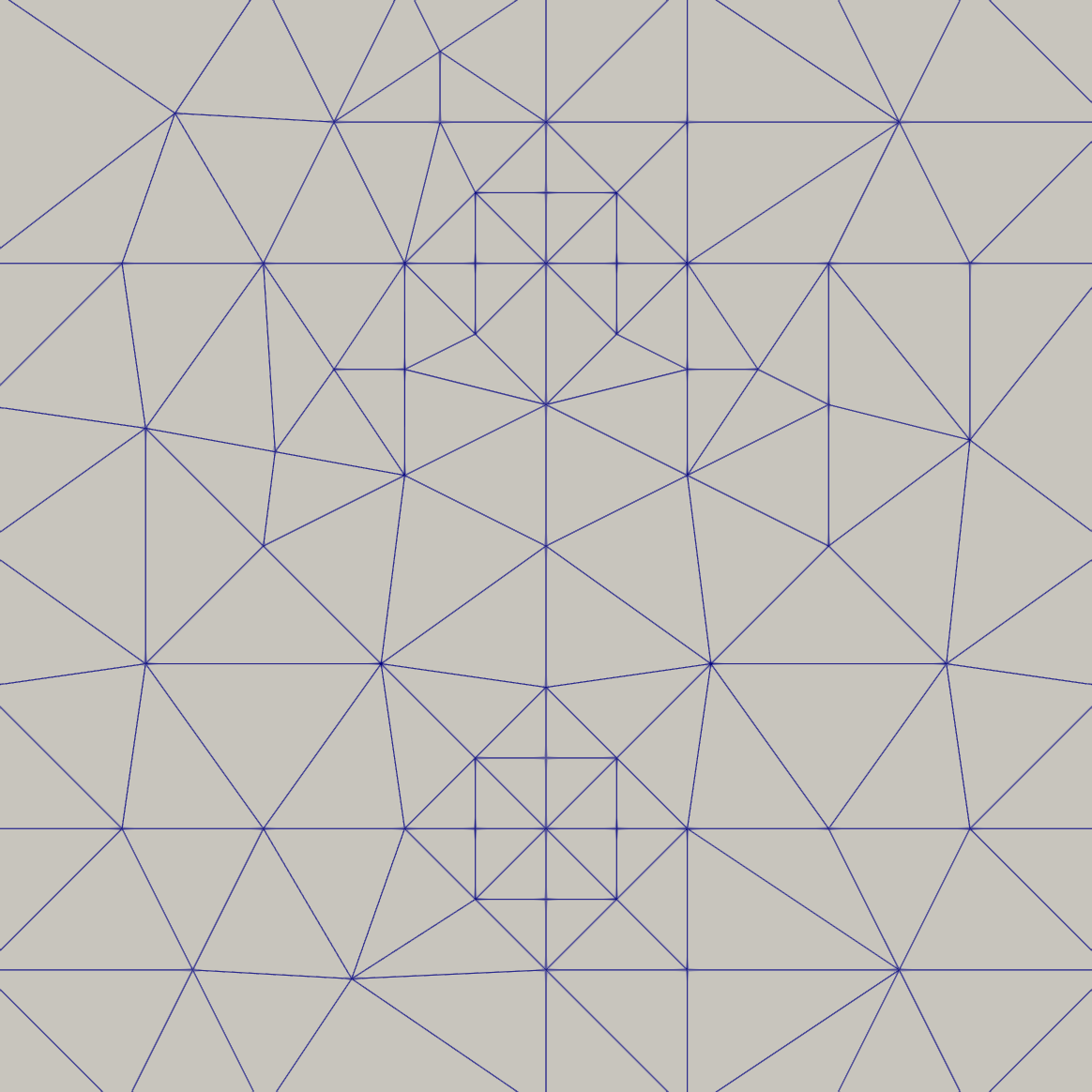} ~\hspace{3mm}
 		\includegraphics[height=.45\linewidth]{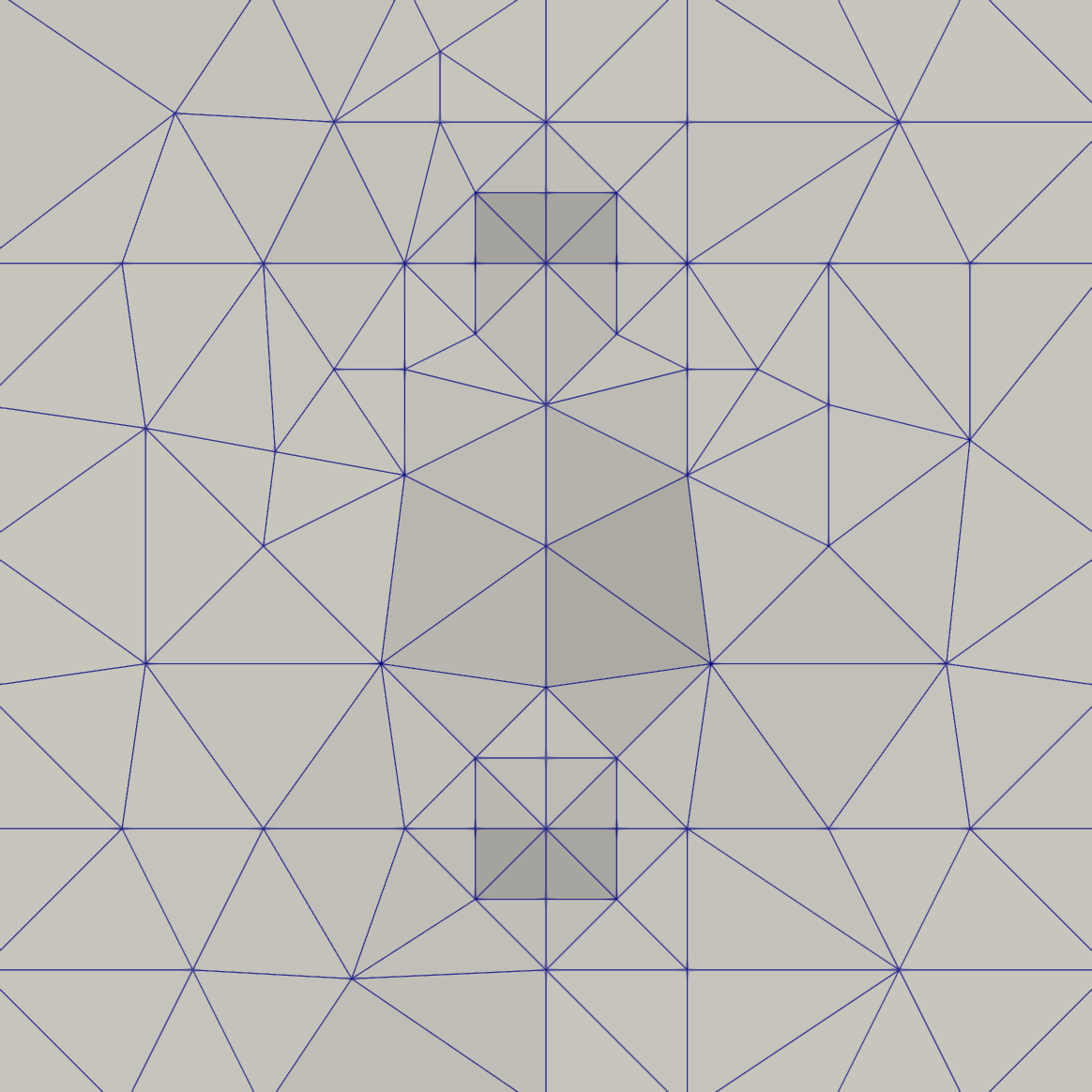}
  	\end{subfigure}
  	\caption{\tred{Distribution of $\{\tilde{\eta}_{\Gamma,T}\}$ (left), $\{\tilde{\eta}_{0,T}\}$ (right) in the initial and the first mesh refinement for $\alpha = 100$ (color scale: white = 0, black = 4.5e-4)} }
  	\label{fig:eta-alpha-100}
\end{figure}
\begin{figure}[h]  
	\hspace{-0.cm}
	\begin{center}
	\begin{subfigure}[b]{0.7\textwidth}        
		\includegraphics[width=.55\linewidth]{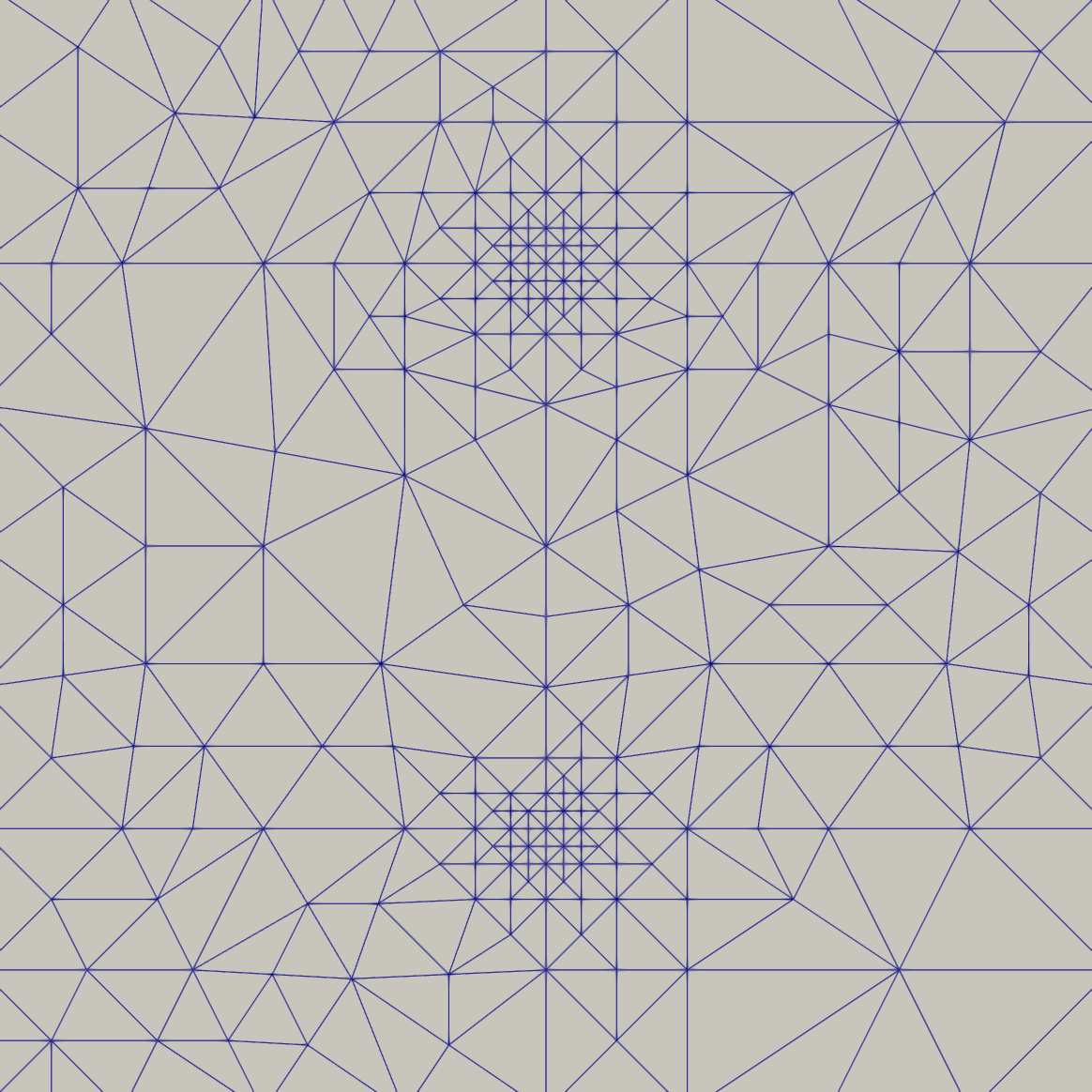} ~\hspace{3mm}
 		\includegraphics[width=.55\linewidth]{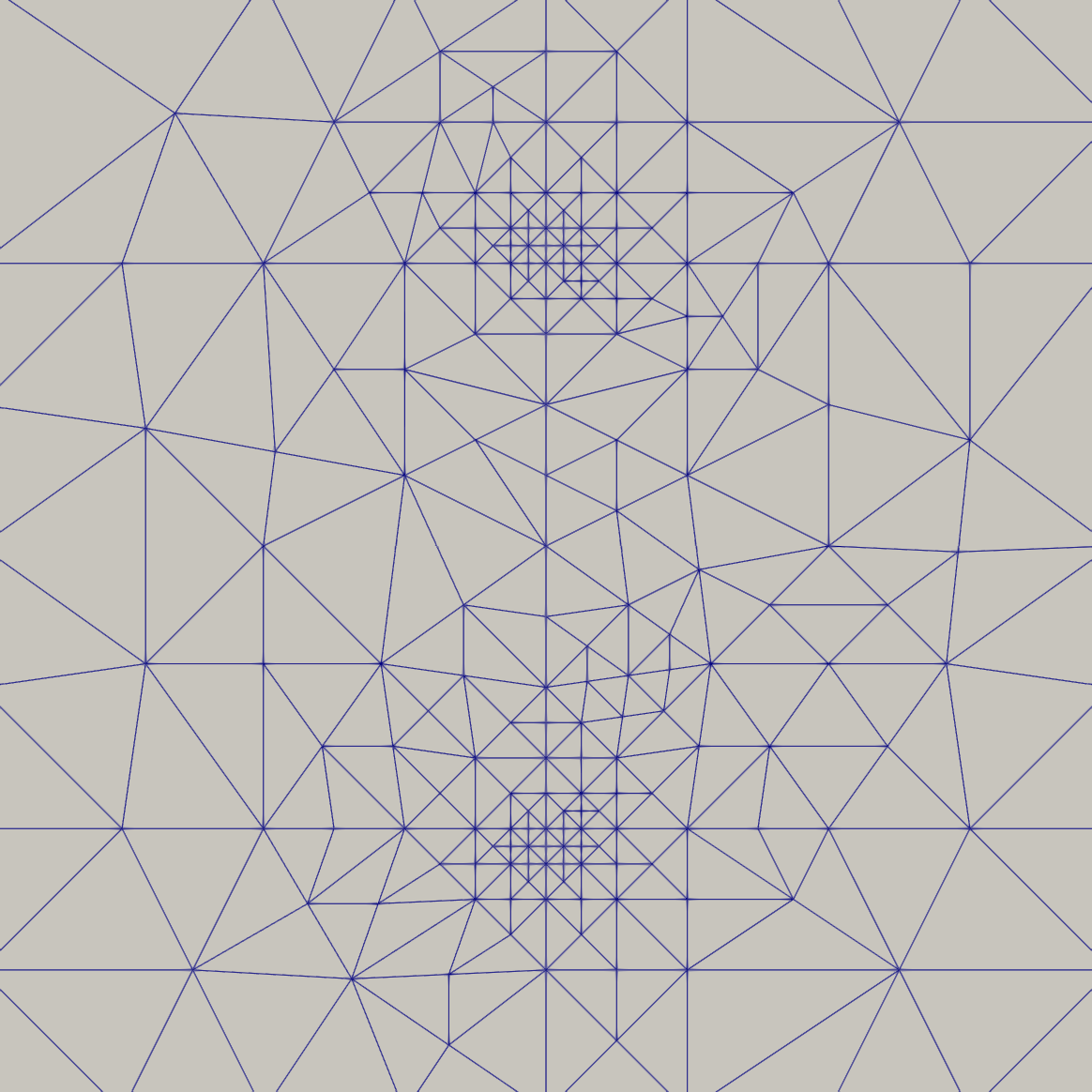} ~\hspace{3mm}
    \end{subfigure} 
    \\
	\begin{subfigure}[b]{0.7\textwidth}
 		\includegraphics[width=.55\linewidth]{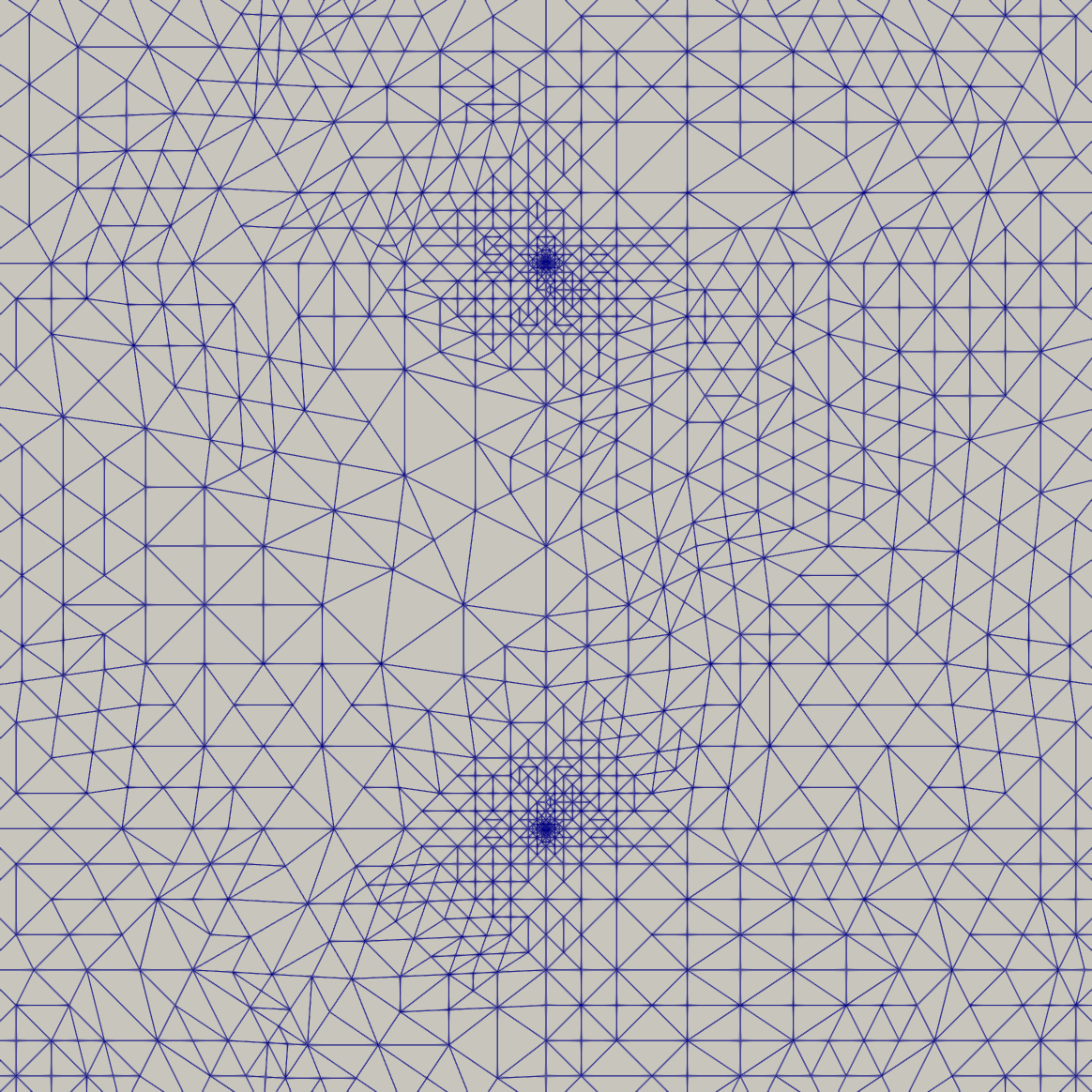} ~\hspace{3mm}
 		\includegraphics[width=.55\linewidth]{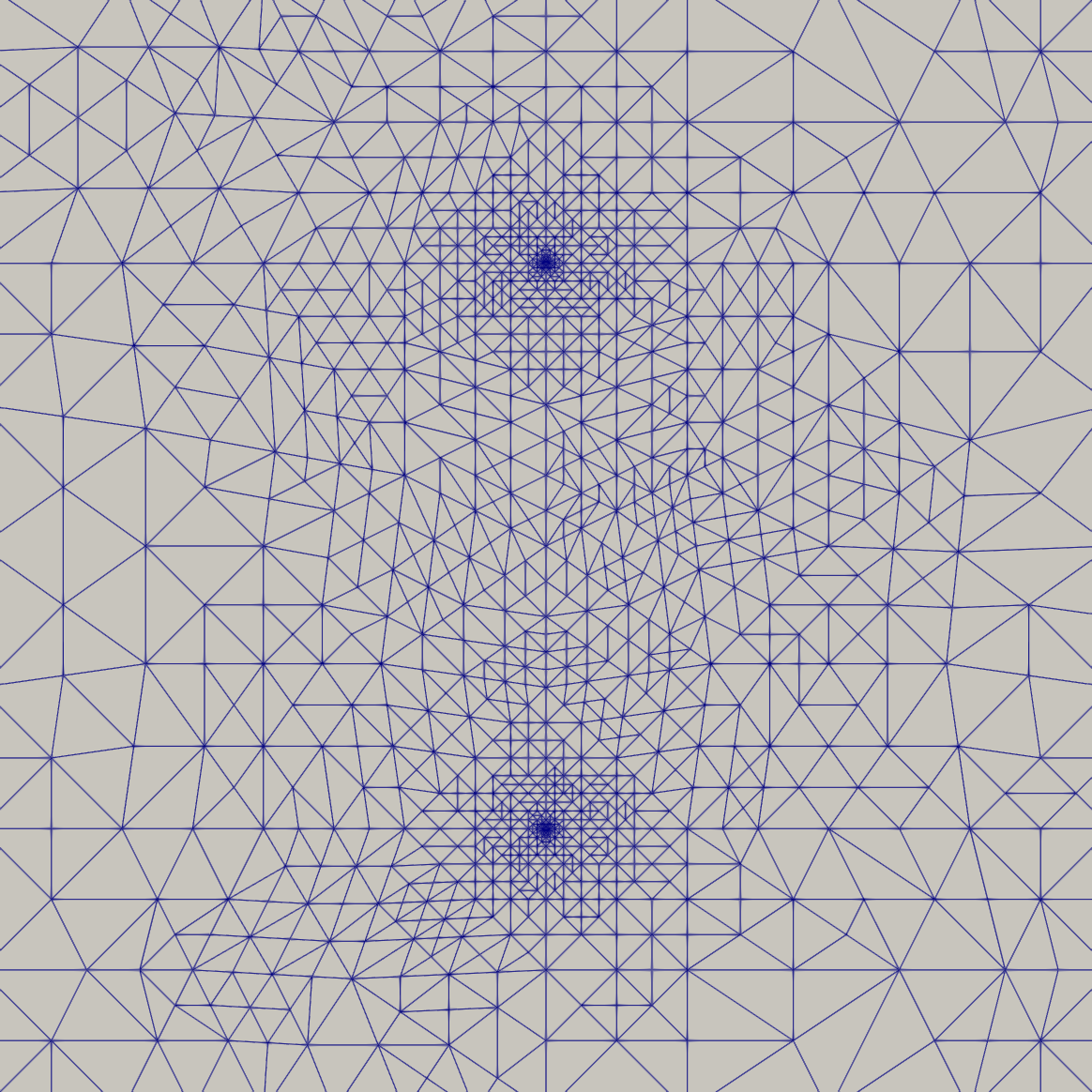} ~\hspace{3mm}
  	\end{subfigure}
  	\\
	\begin{subfigure}[b]{0.7\textwidth}
 		\includegraphics[width=.55\linewidth]{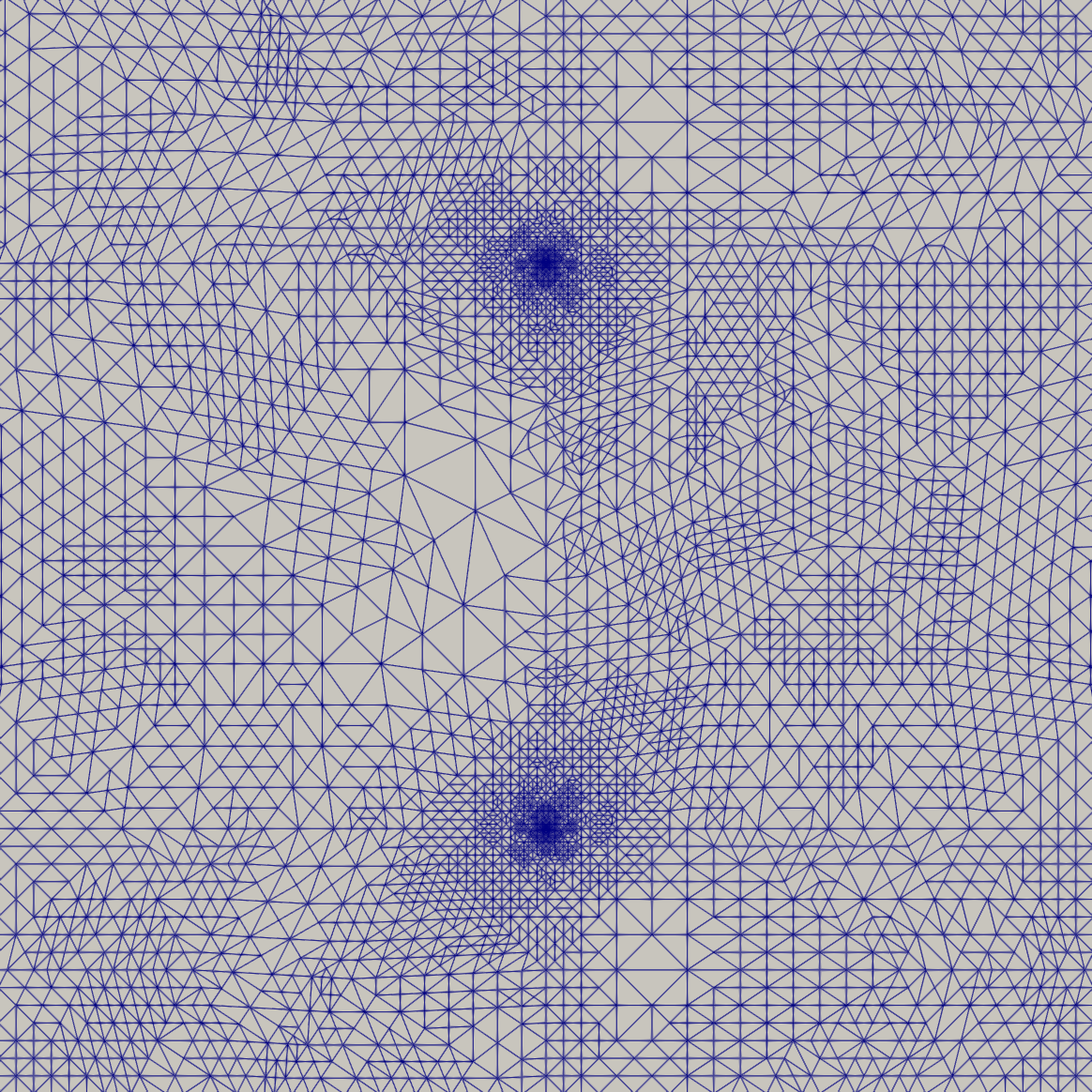} ~\hspace{3mm}
 		\includegraphics[width=.55\linewidth]{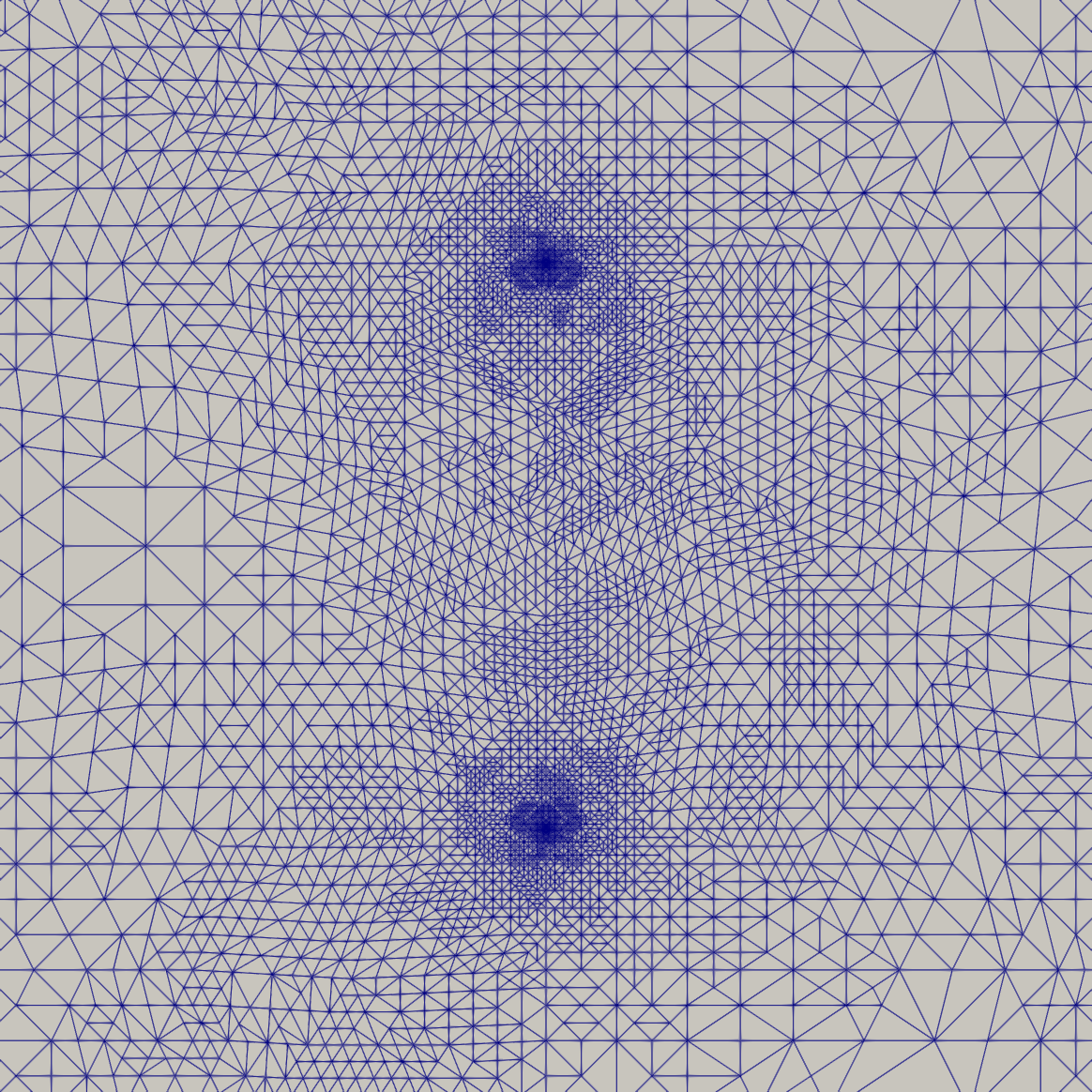} ~\hspace{3mm}
  	\end{subfigure}
  	\end{center}
  	\caption{The 3rd, 7th, 10th adaptive mesh refinements for $\alpha = 0.1$ (left) and for $\alpha= 100$ (right). }
  	\label{fig:ex3-adaptivity-comp}
\end{figure}
%

%

\begin{figure}[h]
	\begin{center}
 		\includegraphics[width=1.\linewidth]{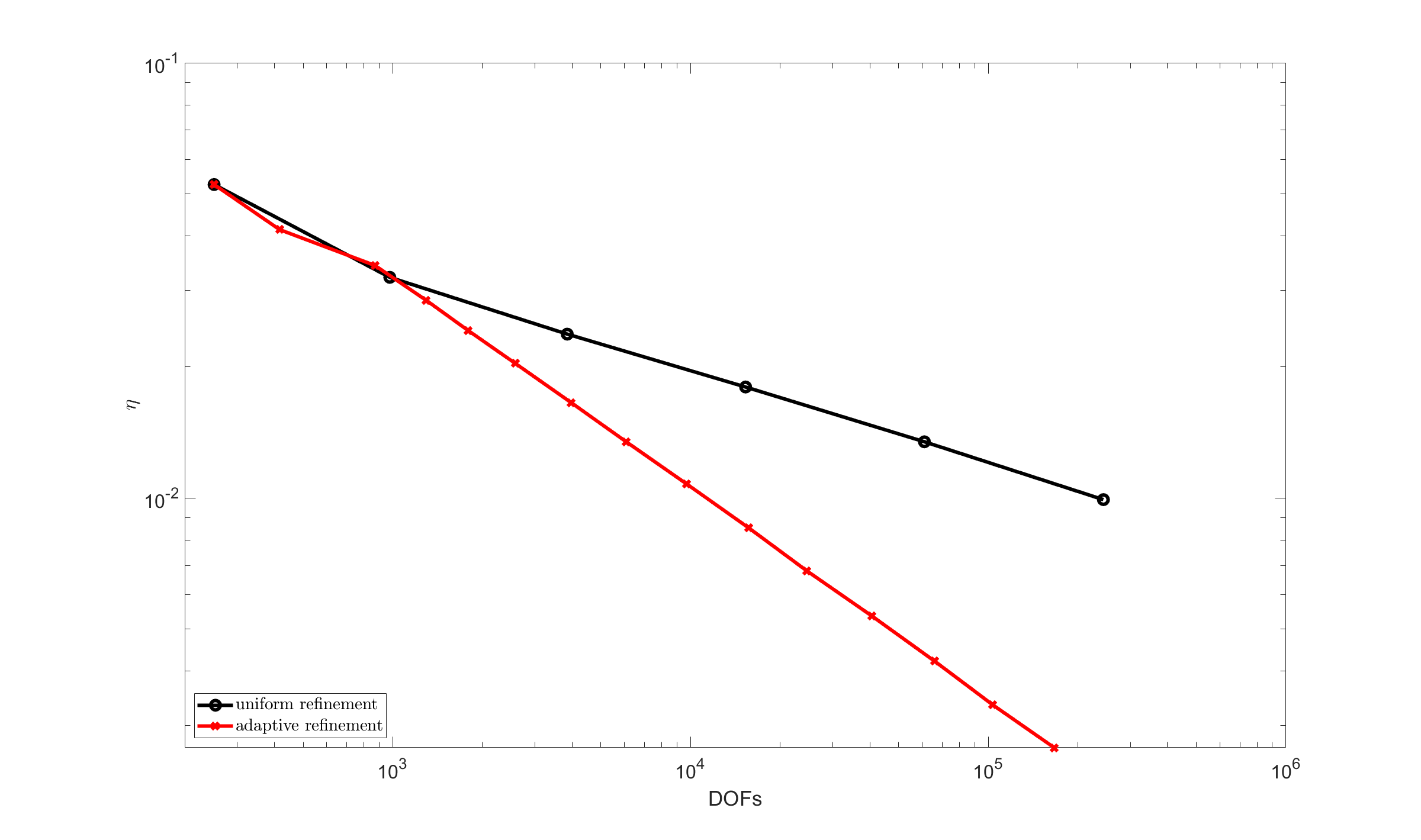} \\ \vspace{-5mm}
 		\includegraphics[width=1.\linewidth]{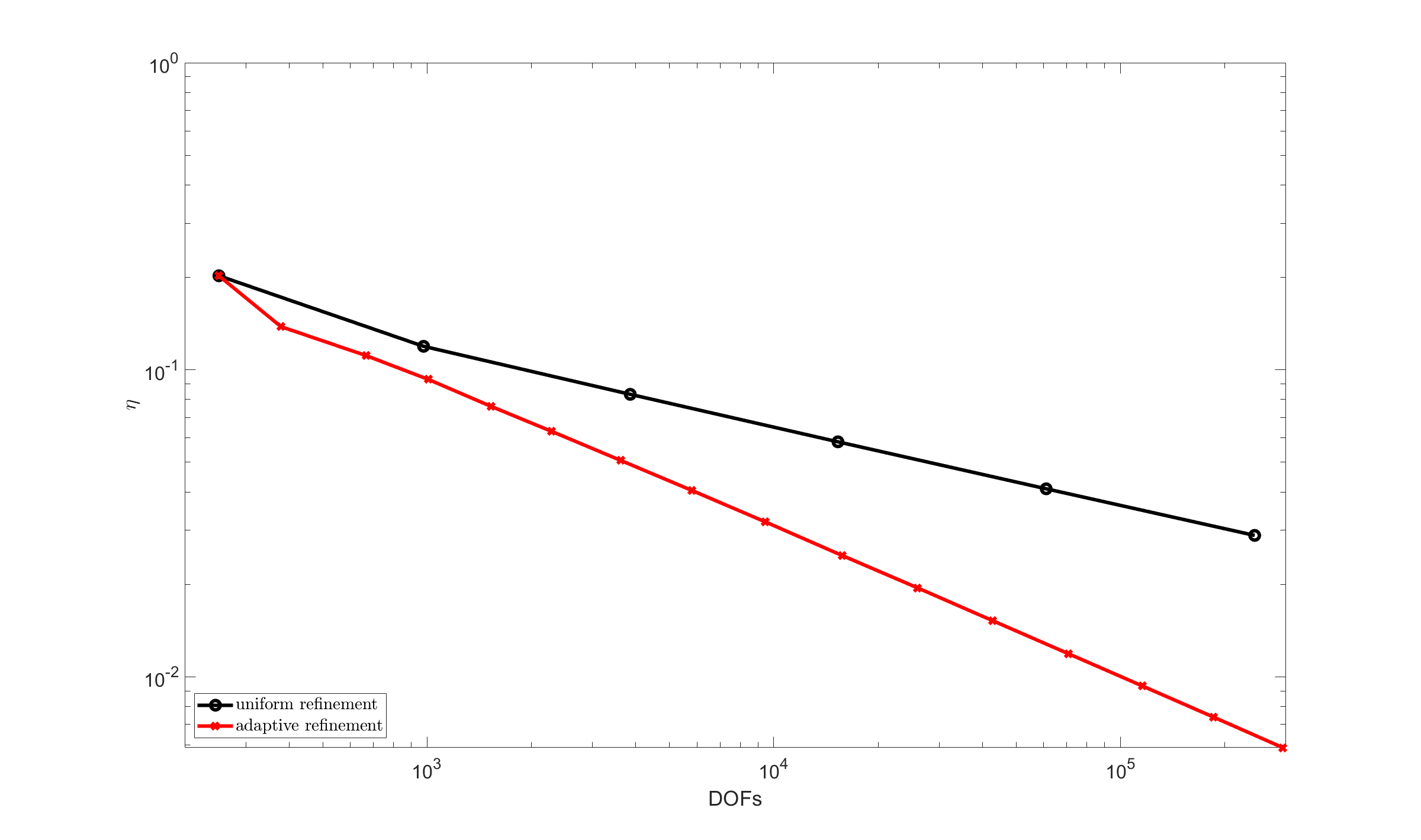} 
 	\end{center}
	\caption{Comparison of $\eta$ for uniform and adaptive mesh refinements for $\alpha = 0.1$ (top) and $\alpha=100$ (bottom). }
	\label{fig:ex3-eta-convergence}
\end{figure}
We can compute $\ub = - \nabla p$ and $f = \div \ub$ on $\Omega \setminus \Gamma$. 
Note that the manufactured solutions are not smooth on 
\algns{
	&\{ y= 3/4, 0 \le x \le 1 \,:\, (x, y) \in \Omega\}, 
	\\
	&\{ y= 1/4, 0 \le x \le 1 \,:\, (x, y) \in \Omega\}, 
	\\
	&\{ x= 1/2, 1/4 \le y \le 3/4 \,:\, (x, y) \in \Omega\}.
}
If we take an initial mesh which includes these segments in the set of edges, then the manufactured solution $(\ub, p)$ are smooth on every triangle. Therefore, the $L^2$ errors $\| \ub - \ub_h \|_0$ and $\| p - p_h \|_0$ will converge with optimal convergence rates for uniform mesh refinement. Since $p_h^*$ is an approximation of $p$ which is better than or as good as $p_h$, we only present $\| p - p_h^* \|_0$ in our numerical experiments. In this experiment, we use the lowest order BDM element for $\bs{V}_h$ and the piecewise constant element for \tred{$Q_h$, so} the optimal convergence rates of $\| \ub - \ub_h \|_0$ and $\| p - p_h \|_0$ are 2 and 1, respectively. 
The $L^2$ errors $\| \ub - \ub_h \|_0$ and $\| p - p_h^* \|_0$ up to degrees of freedom are given in Figure~\ref{fig:eg1-convergence} (black graph), and one can see that $\| p - p_h^* \|_0$ shows superconvergence. The errors for adaptive mesh refinement are also given in Figure~\ref{fig:eg1-convergence} (red graph). \tred{As can be seen in Figure~\ref{fig:ex1-stages}, mesh refinements are done mostly on the slab $0.25 < y < 0.75$ because the manufactured solution vanishes outside of this slab. Moreover, the manufacture solution is smooth on every triangle, so we do not see concentration of mesh refinements in this experiment. Nevertheless, one can see in Figure~\ref{fig:eg1-convergence} that adaptive mesh refinement gives more optimal convergence of errors up to the numbers of degrees of freedom.} The effectivity index is computed by 
\algns{
	\sqrt{\eta^2 + \frac 1{\pi^2} \sum_{T \in \mathcal{T}_h} \osc(f,T)^2} / \| \ub - \ub_h \|_0 , 
}
and \tred{the values of effectivity index up to adaptive mesh refinements are given in Table~\ref{effectivity}.}

In the second set of experiments, we present mesh adaptivity for nonsmooth solutions. 
Since it is difficult to construct nonsmooth manufactured solutions with the fault structure, we show adaptive mesh refinement by our a posteriori error estimator for numerical solutions with given boundary conditions, $\alpha=0.1,10,100$, and $f \equiv 1$. Assuming $\Omega = [0,1]\times [0,1]$ with the same $\Gamma$, zero flux boundary conditions are imposed on the top and bottom boundary components $\{(x,y) \in \Omega \,:\, 0 \le x \le 1, y=0 \text{ or } y = 1\}$ of $\partial \Omega$, and $p = 0$ on the left side, $p=-1$ on the right side, are imposed. 
\begin{figure}[h]  
	\hspace{-0.cm}
	\begin{center}
	\begin{subfigure}[b]{0.7\textwidth}        
		\includegraphics[width=.55\linewidth]{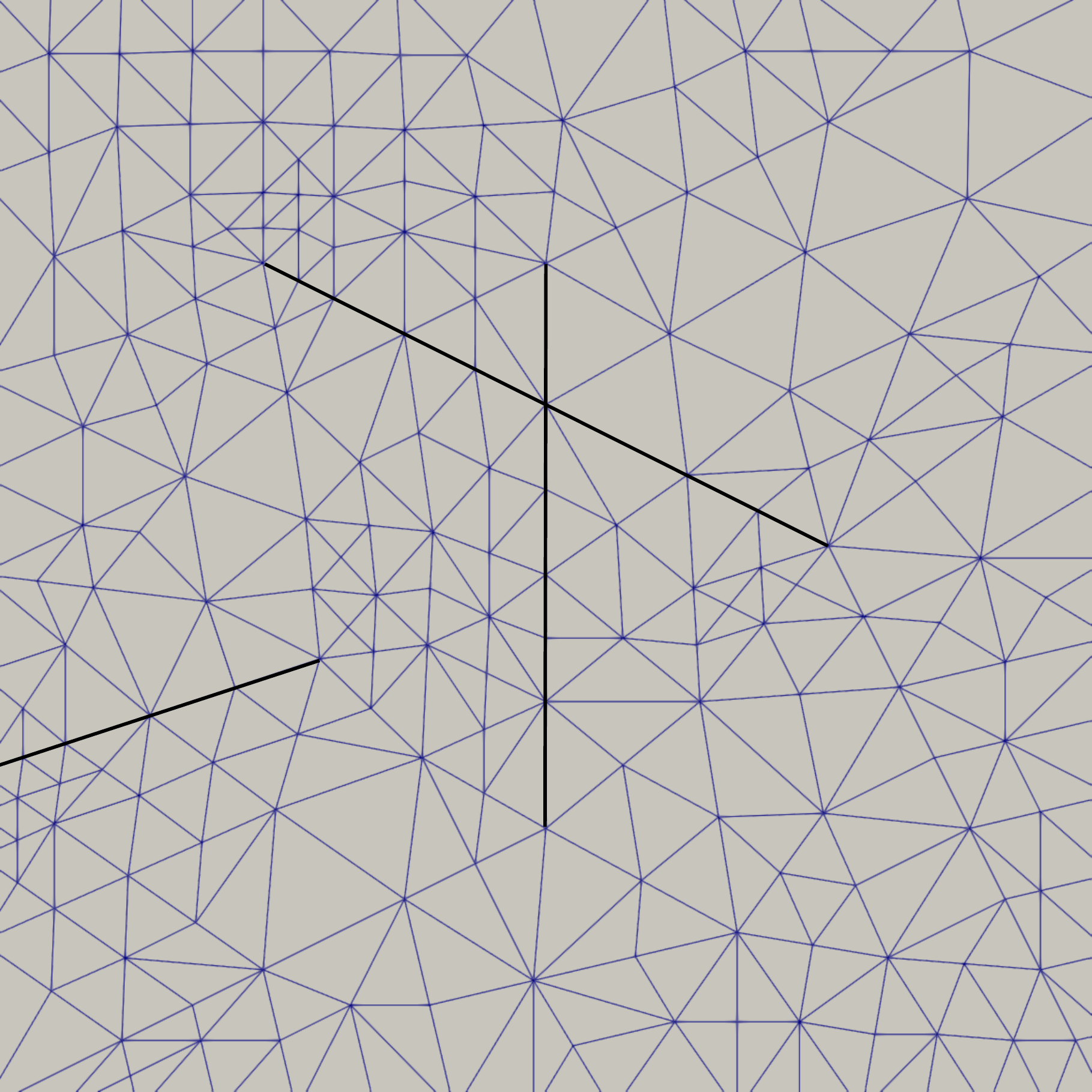} ~\hspace{3mm}
 		\includegraphics[width=.55\linewidth]{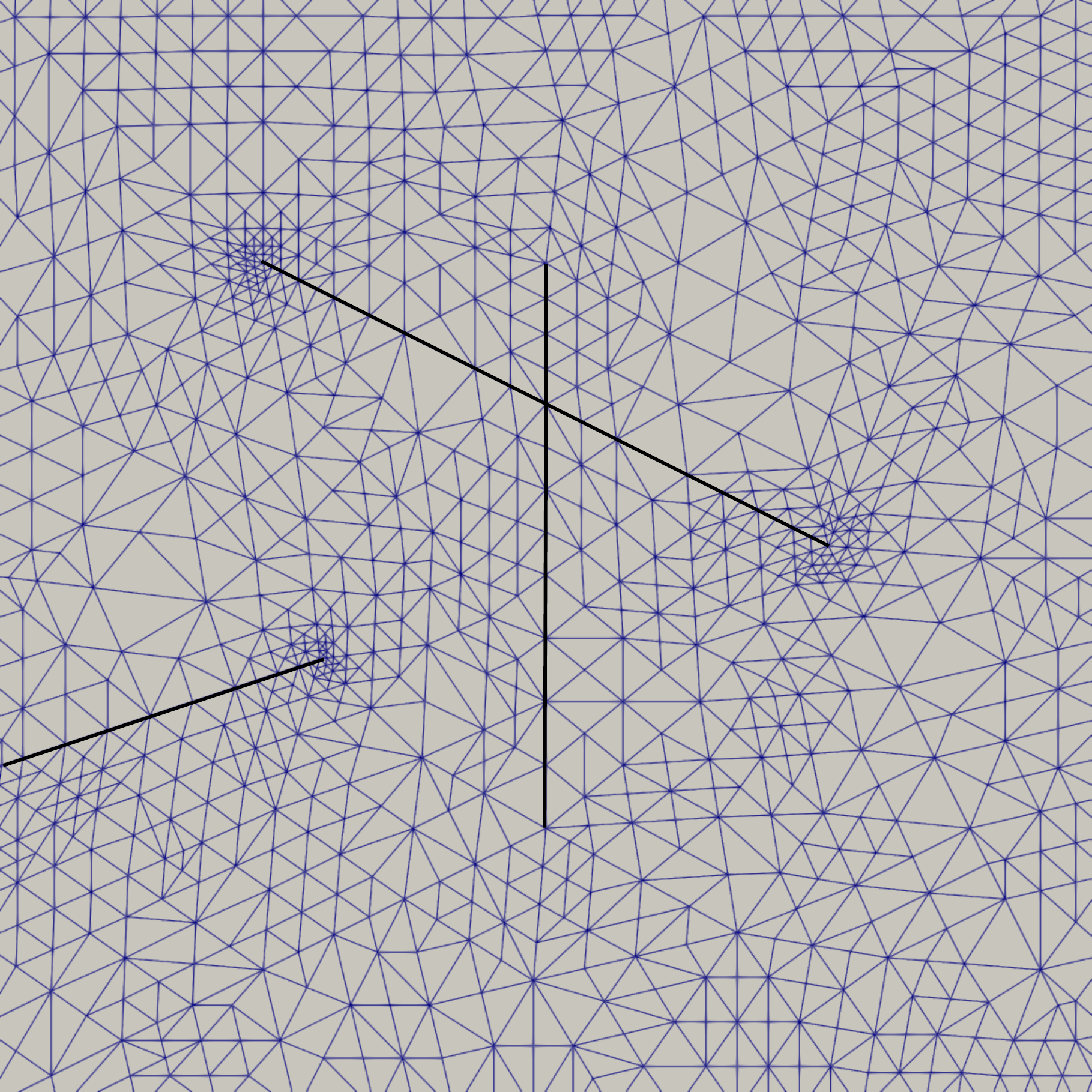} ~\hspace{3mm}
    \end{subfigure} 
    \\
	\begin{subfigure}[b]{0.7\textwidth}
 		\includegraphics[width=.55\linewidth]{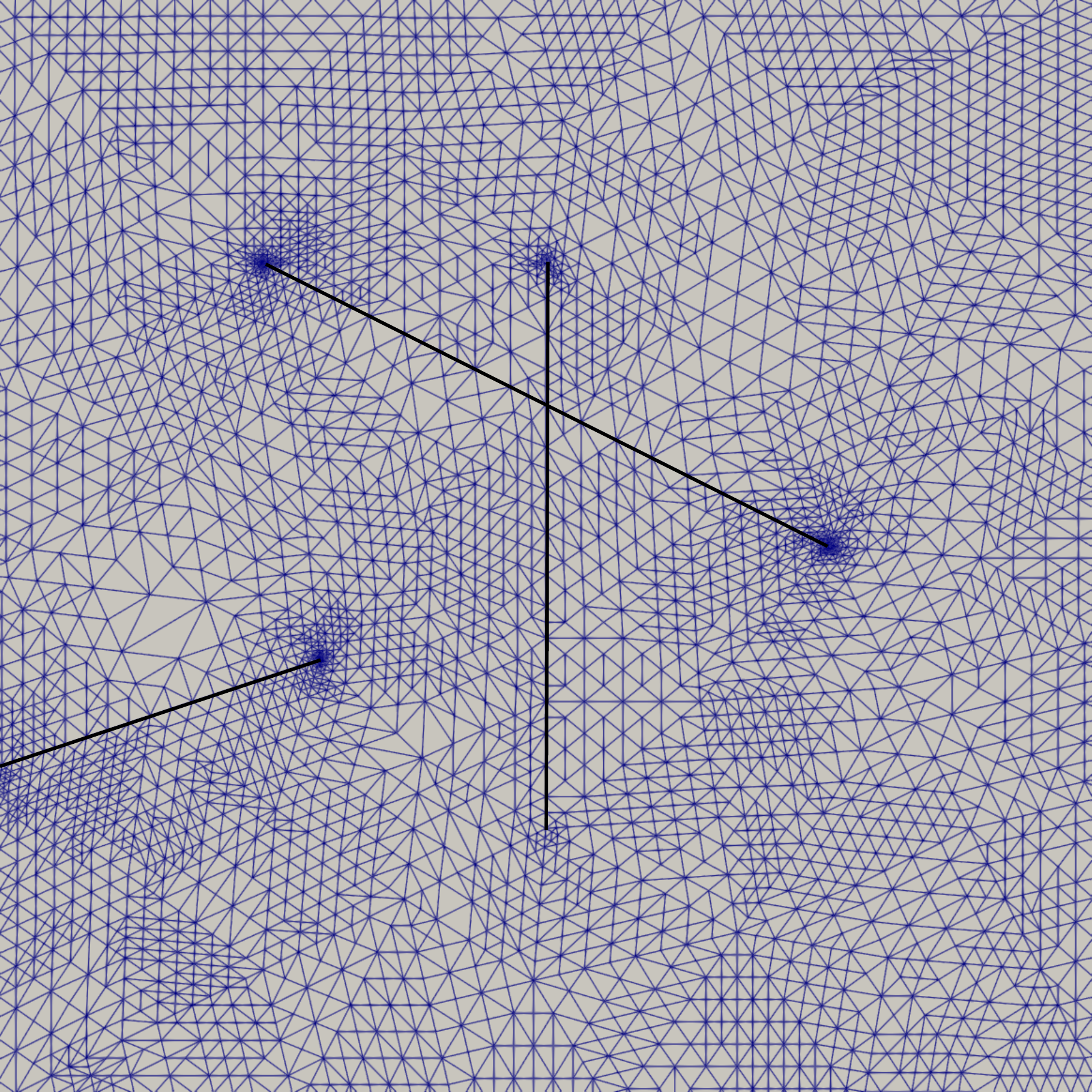} ~\hspace{3mm}
 		\includegraphics[width=.55\linewidth]{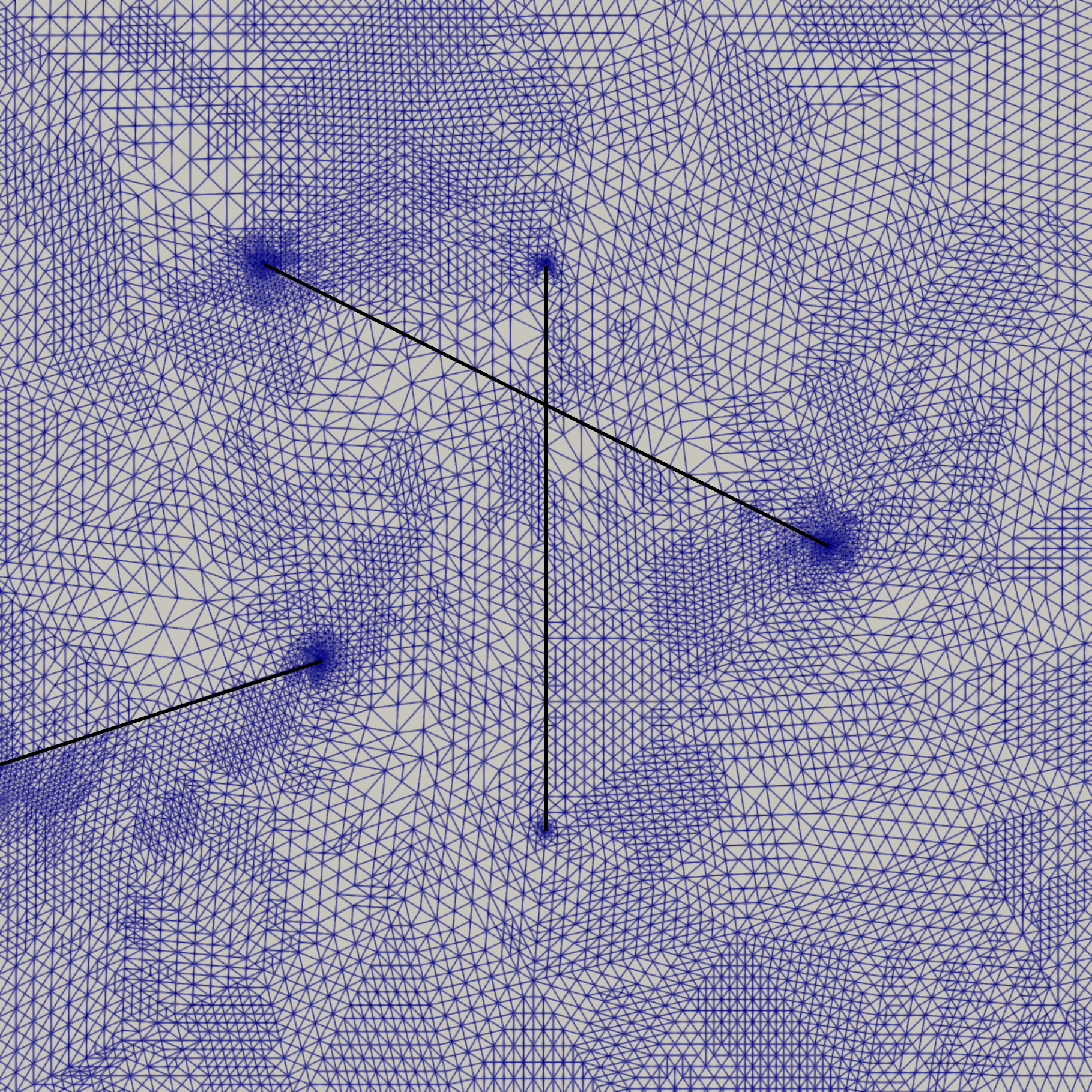} ~\hspace{3mm}
  	\end{subfigure}
  	\end{center}
  	\caption{The 2nd, 5th, 8th, and 10th adaptive mesh refinements for multiple faults. Fault 1 (vertical), fault 2 (upper oblique), fault 3 (touching boundary) have $\alpha$ values 100, 10, 50, respectively. }
  	\label{fig:ex4-adaptivity-comp}
\end{figure}

\begin{figure}[h]
	\begin{center}
 		\includegraphics[width=1.\linewidth]{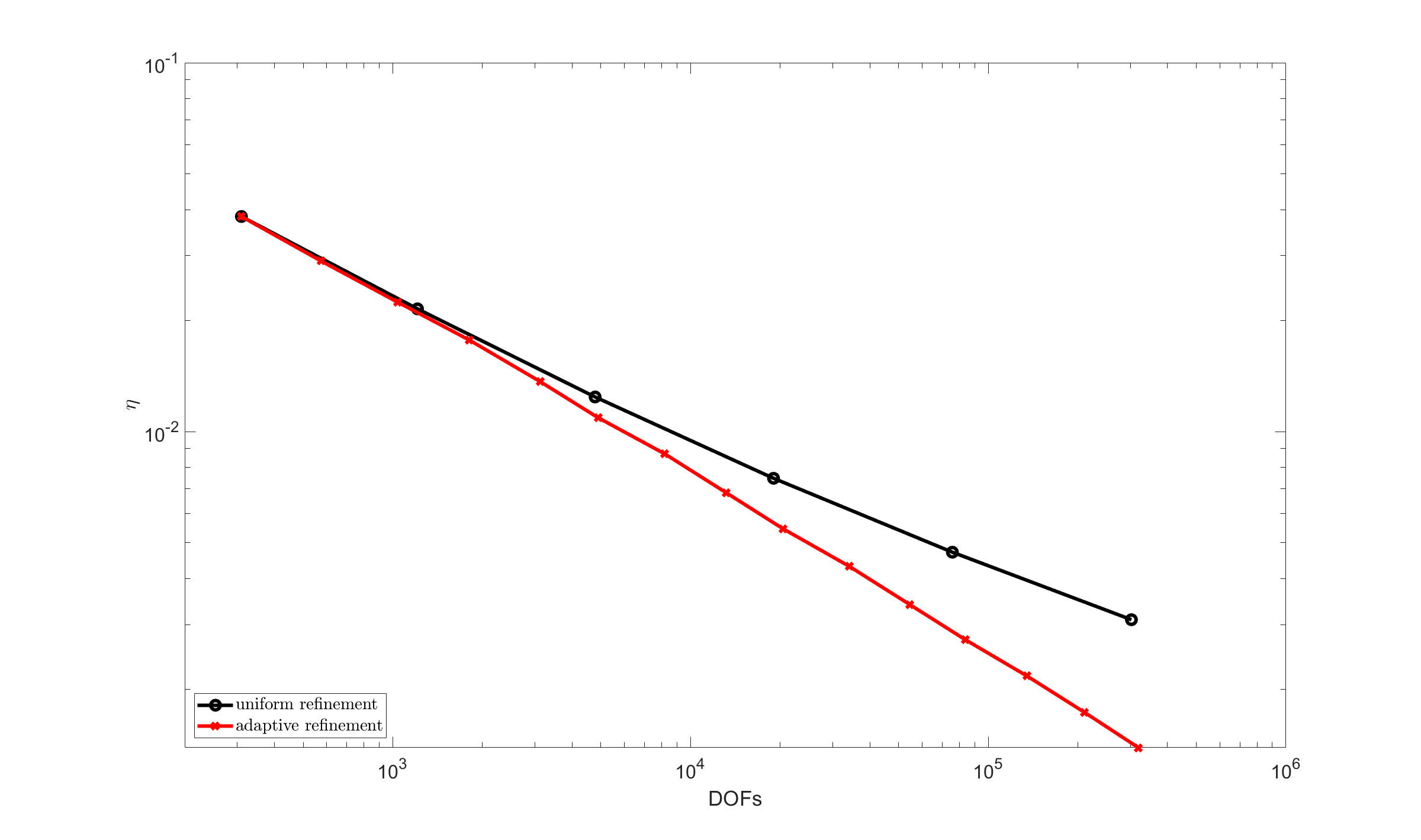}  
 	\end{center}
	\caption{\tred{Comparison of $\eta$ of uniform and adaptive mesh refinements for the multiple faults experiment.} }
	\label{fig:ex4-eta-convergence}
\end{figure}

\tred{Here we use the lowest-order Raviart--Thomas element for experiments. Since $f \equiv 1$, the data oscillation terms vanish. Moreover, $\eta_T$ vanishes for all $T \in \mathcal{T}_h$ because local  shape functions of the lowest order Raviart--Thomas element \eqref{eq:RTN-local} with $k=1$ is included in $\nabla Q_h^*(T)$ space (cf. \eqref{eq:Qhstar-def}) with $k=1$. Therefore, only $\{\eta_E\}_{E \in \mathcal{E}_h^0 \cup \mathcal{E}_h^{\Gamma}}$ gives meaningful values. Since $\eta_E$'s are quantities on edges, which are difficult to visualize, we define $\{\tilde{\eta}_{\Gamma,T} \}_{T \in \mathcal{T}_h}$ and $\{\tilde{\eta}_{0,T} \}_{T \in \mathcal{T}_h}$  by 
\begin{align*}
	\tilde{\eta}_{\Gamma,T}^2 &:= 
		\begin{cases}
			\frac 12 \sum_{E \subset \partial T \cap \Gamma} (\eta_E^2) \quad \text{ if } \pd T \cap \Gamma \not = \emptyset \\
			0 \quad \text{ otherwise} 
		\end{cases}
	, 
	\\
	\tilde{\eta}_{0,T}^2 &:= 
		\begin{cases}
			\frac 12 \sum_{E \subset \partial T \setminus \Gamma} \eta_E^2 \quad \text{ if } \pd T \cap \Gamma = \emptyset \\
			0 \quad \text{ otherwise} 
		\end{cases} ,
\end{align*}
and look at the distributions of $\{\tilde{\eta}_{\Gamma,T}\}$ and $\{\tilde{\eta}_{0,T}\}$. 
In Figures~\ref{fig:eta-alpha-tenth} and \ref{fig:eta-alpha-100}, distributions of $\{\tilde{\eta}_{\Gamma,T}\}$ (left) and $\{\tilde{\eta}_{0,T}\}$ (right) are presented for $\alpha=0.1,100$. In these results we can see that the quantities of $\eta_E$ on the faults are not significant whereas the quantities of $\eta_E$ near the fault are much larger. Moreover, the boundary conditions on the top and bottom boundaries do not give large a posteriori error estimator values near the boundaries. 
}
\tred{In Figure~\ref{fig:ex3-adaptivity-comp}, we presented the 3rd, 7th, 10th mesh refinements for $\alpha=0.1,100$. In both cases, mesh adaptivity is obvious concentrating near the two endpoints of the fault. However, one can see that the fault with $\alpha=100$ (low permeable fault) needs more refinements near the internal fault segment. We believe that this is because the low permeability fault $(\alpha=100)$ can cause more drastic pressure changes near the fault, so the solution regularities are lower than the ones of true solutions with $\alpha=0.1$. 
}
\tred{To see efficiency of adaptive schemes, we give two comparison graphs of $\eta$ and the degrees of freedom in Figure~\ref{fig:ex3-eta-convergence}. The results clearly show that adaptive methods are  more efficient, quantitatively about $6.9$ times for $\alpha=0.1$ and about $2.3$ times for $\alpha=100$. 
}

\tred{In the last experiment we solve the equation with 3 faults and present mesh refinement history. The 3 faults have different $\alpha$ values (see Figure~\ref{fig:ex4-adaptivity-comp} for details). We still observe that mesh refinements are concentrated at the ends of faults. The comparisons of $\eta$ and the degrees of freedom for uniform and adaptive meshes are given in Figure~\ref{fig:ex4-eta-convergence}. The efficiency of adaptive methods is not as high as the single fault examples. This is probably because irregular solutions due to the multiple faults give large $\eta$ values on most regions of the domain, so refined meshes by a posteriori error estimator are not so different from uniform refinements.   
}

 
\section{Conclusion}
In this work we studied a recovery type a posteriori error estimator of the Darcy flow model with Robin-type interface conditions. The reliability and the local efficiency of the estimator are proved. In contrast to the previous work in \cite{Konno-Schotzau-Stenberg:2011}, we developed a new $H(\div)$-based proof using a modified Helmholtz decomposition, a modified Scott--Zhang interpolation, edge/face-wise integration by parts. Moreover, we proved that the post-processed pressure is bounded by the estimator, and a superconvergent upper bound can be obtained under a (partial) elliptic regularity assumption of the dual problem. Numerical test results are included to illustrate the adaptivity results of our estimator.

\section{Appendix: integration by parts identities}
In this section we present identities from the integration by parts that we used in the paper.

In this section $(a_1 \; \cdots\; a_n)^t$ denotes the column vector with entries $a_1, \cdots, a_n$.
For differentiable functions $\phi : \mathbb{R}^2 \to \mathbb{R}$, $\Psi: \mathbb{R}^2 \to \mathbb{R}^2$ with 
$\Psi = (\Psi_1 \; \Psi_2)^t$, we define $\curl$ and $\rot$ by
\algn{ \label{eq:2d-curl-rot}
	\curl \phi = \pmat{- \pd_y \phi \\ \pd_x \phi}, \qquad \rot \Psi = -\pd_y \Psi_1 + \pd_x \Psi_2 .
}
For a triangle $T \subset \mathbb{R}^2$, $\bs{n}_{\pd T}$ is the outward unit normal vector field on $\pd T$ and $\bs{t}_{\pd T}$ is the unit tangential vector field along the counterclock-wise direction of $\pd T$. Denoting $\bs{n}_{\pd T}$ by $\bs{n}_{\pd T} = (n_1 \; n_2)^t$, note that $\bs{t}_{\pd T} = (-n_2 \; n_1)^t$. By the integration by parts,
\algn{
	\notag
	\int_T \curl \phi \cdot {\Psi} \,ds &= \int_T (- \pd_y \phi \Psi_1 + \pd_x \phi \Psi_2) \, ds
	\\
	\label{eq:IBP-2d-1}
	&= \int_T \nabla \phi \cdot \pmat{ \Psi_2 \\ - \Psi_1} \,ds 
	\\
	\notag
	&= \int_{\pd T} \phi \pmat{n_1 \\ n_2} \cdot \pmat{\Psi_2 \\ -\Psi_1} \,dl - \int_T \phi (\pd_x \Psi_2 - \pd_y \Psi_1) \, ds
	\\
	\notag
	&= \int_{\pd T} \phi \pmat{-n_2 \\ n_1} \cdot \pmat{\Psi_1 \\ \Psi_2} \,dl - \int_T \phi \rot {\Psi} \, ds
	\\
	\notag
	&= \int_{\pd T} \phi \bs{t}_{\pd T} \cdot \Psi \,dl - \int_T \phi \rot \Psi \, ds
}
and for $E \subset \pd T$, 
\algn{
	\notag
	\int_{E} \curl \phi \cdot \bs{n}_{\pd T} \,dl &= \int_{E} \pmat{- \pd_y \phi \\ \pd_x \phi} \cdot \pmat{n_1 \\ n_2} \, dl
	\\
	\label{eq:IBP-2d-2}
	&= \int_{E} \nabla \phi \cdot \pmat{ n_2 \\ - n_1} \,dl 
	\\
	\notag
	&= -\int_{E} \nabla \phi \cdot \bs{t}_{\pd T} \,dl .
}

Let $F$ be a triangle in the $xy$-plane in $\mathbb{R}^3$ and $\bs{n}_{\pd F} = (n_1 \; n_2\; 0)^t$ be the unit outward normal vector field of $F$ in $\mathbb{R}^3$. The tangential vector field on $\pd F$ in $\mathbb{R}^3$ is $\bs{t}_{\pd F} = (-n_2 \; n_1 \; 0)^t$. 
For differentiable functions $\phi:\mathbb{R}^3 \to \mathbb{R}$, $\Psi : \mathbb{R}^3 \to \mathbb{R}^3$ with 
$\Psi = ({\Psi_1 \; \Psi_2 \; \Psi_3})^t$, $\bs{n}_F := ({0 \; 0 \; 1})^t$, we get 
\algn{ \label{eq:3d-curl-identity}
	\bs{n}_F \times \Psi = \pmat{-\Psi_2 \\ \Psi_1 \\ 0} , \quad \curl \Psi \cdot \bs{n}_F = \pd_x \Psi_2 - \pd_y \Psi_1.
}
By these identities and the 3rd equality in \eqref{eq:IBP-2d-1}, 
\algn{
	\notag
	\int_F \curl \Psi \cdot \bs{n}_F \phi \, ds &= \int_F (\pd_x \Psi_2 - \pd_y \Psi_1 )\phi \,ds
	\\
	\label{eq:IBP-3d-1}
	&= - \int_F \pmat{-\pd_y \phi \\ \pd_x \phi} \cdot \pmat{\Psi_1 \\ \Psi_2} \,ds + \int_{\pd F} \phi \pmat{n_1 \\ n_2} \cdot \pmat{\Psi_2 \\ -\Psi_1} \,dl
	\\
	\notag
	&= \int_F \pmat{\pd_x \phi \\ \pd_y \phi} \cdot \pmat{-\Psi_2 \\ \Psi_1} \,ds + \int_{\pd F} \phi \pmat{-n_2 \\ n_1} \cdot \pmat{\Psi_1 \\ \Psi_2} \,dl
	\\
	\notag
	&= \int_F \nabla \phi \cdot (\bs{n}_F \times \Psi) \,ds - \int_{\pd F} \phi \bs{t}_{\pd F} \cdot \Psi \,dl .
}
%

\phantom{a}

\bigskip

\noindent {\bf Funding} Jeonghun J. Lee gratefully acknowledge support from the
National Science Foundation (DMS-2110781).


\end{document}